\documentclass{siamart171218}
\usepackage{amssymb}
\usepackage{enumerate}
\usepackage{graphicx}
\usepackage{psfrag}
\usepackage[percent]{overpic}
\DeclareMathAlphabet{\mathpzc}{OT1}{pzc}{m}{it}

\newcommand{\Tg}{\mathcal{T}^{L,n}_{geo,\sigma}}

\newcommand{\PiT}{\widehat \Pi^\triangle_q}
\newcommand{\tPiT}{\widetilde \Pi_q}
\newcommand{\PiGL}{\widehat \Pi^{\Box}_q}
\newcommand{\tPiGL}{\widetilde \Pi_q}
\newcommand{\M}{\mathcal{M}}

\newcommand{\di}{\text{\rm dist}}
\newcommand{\xh}{{\widehat x}}
\newcommand{\yh}{{\widehat y}}

\newcommand{\boxt}{{\widetilde \bx}}
\newcommand{\xt}{{\widetilde x}}
\newcommand{\yt}{{\widetilde y}}
\newcommand{\Kt}{{\widetilde K}}
\newcommand{\ut}{{\widetilde u}}
\newcommand{\rt}{{\widetilde r}}
\newcommand{\Kh}{{\widehat  K}}
\newcommand{\uh}{{\widehat u}}
\newcommand{\Tf}{{T}^{\text{\rm flip}}}
\newcommand{\Tref}{\widetilde {\calT}}
\newcommand{\BL}{{\sf BL}}
\newcommand{\CL}{{\sf CL}}
\newcommand{\Co}{{\sf C}}
\newcommand{\Te}{{\sf T}}
\newcommand{\Mi}{{\sf M}}
\newcommand{\bx}{\mathbf{x}}
\newcommand{\K}{\mathcal{K}}
\newcommand{\R}{\mathbb{R}}

\newtheorem{remark}[theorem]{Remark}
\newtheorem{example}[theorem]{Example}
\newcommand{\eremk}{\hbox{}\hfill\rule{0.8ex}{0.8ex}}
\numberwithin{equation}{section}

\newcommand{\calO}{{\mathcal O}}
\newcommand{\calT}{{\mathcal T}}

\newcommand{\bA}{{\boldsymbol A}}
\newcommand{\bP}{{\boldsymbol P}}
\newcommand{\bbV}{{\boldsymbol V}}
\newcommand{\bO}{{\boldsymbol 0}}

\newcommand{\bdy}{{\Gamma}}
\newcommand{\eps}{\varepsilon}

\newcommand{\bbN}{{\mathbb N}}
\newcommand{\bbQ}{{\mathbb Q}}
\newcommand{\bbP}{{\mathbb P}}
\newcommand{\bbR}{{\mathbb R}}
\title{
$hp$-FEM for reaction-diffusion equations.
\\
II: Robust exponential convergence for multiple length scales in corner domains.
\thanks{
The research of JMM was supported by the Austrian Science Fund (FWF) project F 65. 
Work performed in part while CS was visiting the Erwin Schr\"odinger Institute (ESI)
in Vienna in June-August 2018 during the ESI thematic period 
``Numerical Analysis of Complex PDE Models in the Sciences''. 
Research of CS supported in part by the Swiss National Science Foundation.
}
}
\author{Lehel Banjai\thanks{Maxwell Institute for Mathematical Sciences, School of Mathematical  
        \& Computer Sciences, Heriot-Watt University, Edinburgh EH14 4AS, UK  (\texttt{l.banjai@hw.ac.uk}).}
\and 
Jens M. Melenk\thanks{Institut  f\"{u}r  Analysis und  Scientific Computing, 
Technische Universit\"{a}t Wien, A-1040  Vienna, Austria (\texttt{melenk@tuwien.ac.at}).}
\and
Christoph Schwab\thanks{Seminar for Applied Mathematics, ETH Z\"{u}rich, ETH Zentrum, HG  G57.1, 
                        CH8092 Z\"{u}rich, Switzerland (\texttt{christoph.schwab@sam.math.ethz.ch}).}}

\pagestyle{myheadings}
\thispagestyle{plain}
\markboth{\textsc{L. Banjai, J. M. Melenk, Ch. Schwab}}
         {$hp$ FEM for reaction-diffusion}
\date{Draft version of \today.}
\begin{document}
\maketitle
\begin{abstract}
In bounded, polygonal domains $\Omega\subset \bbR^2$ with 
Lipschitz boundary $\partial\Omega$ consisting of a finite number of 
Jordan curves admitting analytic parametrizations,
we analyze $hp$-FEM discretizations of linear, second order,
singularly perturbed reaction diffusion equations 
on so-called \emph{geometric boundary layer meshes}. 
We prove, under suitable analyticity assumptions on the
data, that these $hp$-FEM afford exponential convergence
in the natural ``energy'' norm of the problem,
as long as the geometric boundary layer 
mesh can resolve the smallest length scale present in the problem.
Numerical experiments confirm the robust exponential convergence
of the proposed $hp$-FEM.
\end{abstract}

\begin{keywords}
anisotropic $hp$--refinement, 
geometric corner refinement, 
exponential convergence.
\end{keywords}

\begin{AMS}
65N12,   
65N30.   
\end{AMS}
%
\section{Introduction}
\label{S:intro}
The need for accurate numerical approximations of solutions
to singularly perturbed partial differential equations
in nonsmooth domains arises in a wide range of applications.
Higher order numerical methods must cope with the appearance
of \emph{boundary layers} and their interaction with 
\emph{geometric corner and edge singularities}. 
They are due to length scales introduced into weak solutions
by small or large parameters in the differential operator.
Accordingly, a large body of numerical analysis research has
been developed during the past decades on their efficient
numerical resolution; 
we mention only the texts 
\cite{MillerRiordShishk96,RoosStynsTobiska2ndEd}
and the references there for reaction-advection-diffusion problems,
and \cite{SchotzScSt_NM1999} for viscous, incompressible flow. 
The discretization methods presented and surveyed
in \cite{RoosStynsTobiska2ndEd} are of fixed order and 
of Finite Difference or Finite Element type.
They account specifically for the appearance of boundary and interior layers 
in solutions of the singularly perturbed boundary value problems.
Being of fixed order, the corresponding discretization methods
can afford at best \emph{fixed, algebraic orders of convergence}
whose convergence is, however, \emph{robust}:
the constants implicit in {\sl a priori} error bounds are independent 
of the singular perturbation parameters and, hence, 
of the physical length scales in solutions that are implied by the singular
perturbation of the governing equations.

As it is well-known, elliptic boundary value problems
in domains $\Omega$ with piecewise analytic boundary 
for differential operators 
with analytic in $\overline{\Omega}$ coefficients and
forcing terms admit \emph{exponential convergence rates} 
by Galerkin approximations with local mesh refinement
and concurrent, judicious increase of the polynomial degree. 
This so-called \emph{$hp$-Finite Element approach}
has been analyzed in a series of papers, 
see \cite{BabGuoCurved1988,phpSchwab1998}
and, more recently, in \cite{SSW13_1129,SS15_2016,SS18_2263},
and the references there, for regular elliptic boundary value
problems.

The study of parametric regularity and 
the proof of \emph{robust, algebraic convergence rates} 
of discretizations for singularly perturbed, elliptic
boundary value problems on polygons 
seems to have been initiated
by G.I.~Shiskhin in the 1980s \cite{ShishCrnerBlr87}.

For singular perturbation problems, corresponding results
on robust \emph{exponential} convergence rates
and 
corresponding analytic regularity estimates for
solutions have been obtained in a series of papers 
in the 1990s, see
\cite{schwab-suri96,melenk-schwab98,M97_738,MS99_325,melenk02} 
and the references there. 
The results from \cite{melenk-schwab98,M97_738,MS99_325} were
restricted to domains $\Omega$ with smooth (analytic) boundary $\partial \Omega$.
In \cite{MelCS_RegSingPert} analytic regularity results for elliptic reaction-diffusion
problems in two space dimensions that are uniform with respect to the perturbation parameter
were obtained. These results allowed us in \cite{melenk-schwab98} to prove
\emph{robust exponential convergence} of $hp$-FEM with so-called 
two-element and geometric boundary layer meshes,
in domains $\Omega$ with a smooth (analytic) boundary $\partial\Omega$.  
In \cite{melenk02}, an analytic regularity theory for solutions of  
linear, singularly perturbed elliptic reaction-diffusion problems in polygonal
domains $\Omega$ as depicted in Fig.~\ref{fig:curvilinear-polygon} was developed. 
In these works, elliptic singular perturbations with
exactly one small length scale were considered.
Examples of elliptic singular perturbation problems where
several characteristic length scales appear simultaneously
comprise in particular dimensionally reduced models of curved
thin solids (``shells'') in so-called ``bending-dominated''
states (see, e.g., \cite{DaugeShellBl} and the references there for
a detailed discussion of possible length scales), 
and linear, elliptic 
reaction-diffusion boundary value problems that result
from implicit time-discretizations of parabolic evolution
equations and, more recently, 
from discretizations of fractional powers of elliptic
operators (see, e.g., \cite{BMNOSS17_732,melenk2019hpfem}
and the references there).
Further applications of the presently developed results arise in
electromagnetics in so-called eddy-current models (where the small parameter
is a complex number) (see, e.g., \cite{buret-etal12} and the references there),
and in $hp$-FEM for advection-diffusion problems where stability, in addition
to consistency, is a major issue 
(see, e.g., \cite{MS99_325,demkowicz-gopalakrishnan-niemi12}).

The present work extends in the reaction-diffusion case 
the $hp$-error analysis of Part I \cite{melenk-schwab98} 
to polygons and addresses the question how to 
approximate problems with multiple scales. 
The meshes presented here are also appropriate in situations
where the precise length scales are unavailable, as is the case for example
in problems from computational mechanics, see, e.g. \cite{DaugeShellBl}.

In the following Section~\ref{sec:PrbForm} we present a model singularly perturbed linear elliptic diffusion problem,
where the perturbation parameter $0< \varepsilon \leq 1$ dictates the 
\emph{single boundary length scale} $\varepsilon$. 
We hasten to add that the ensuing $hp$-approximation results hold,
independent of the particular model problem, and apply to a wider range of
singular perturbations, as plate and shell models with possibly
multiple length scales. See, e.g., 
\cite{ArnFalkBLRM1996,GMSShell1998,DaugeShellBl,TemamSgPrt2018}
and the references there.
\subsection{Model reaction-diffusion problem}
\label{sec:PrbForm}
In a bounded domain $\Omega\subset \mathbb{R}^2$, 
which is assumed to be scaled to unit size, and for a parameter $0<\varepsilon \leq 1$,
we consider the $hp$-FE approximation of the 
\emph{model reaction-diffusion Dirichlet problem}
\begin{equation} \label{eq:sg-per}
	-\varepsilon^2 \nabla\cdot \left( A(\bx) \nabla u_\varepsilon\right)
	+ c(\bx) u_\varepsilon = f \quad \mbox{ in $\Omega$}, 
	\qquad u_\varepsilon = 0 \quad\mbox{ on $\partial\Omega$}.
\end{equation}
We assume 
\begin{equation}\label{eq:sg-per-asmp}
\begin{array}{l}
	\text{ $A$, $c$, $f$ analytic on $\overline{\Omega}$, independent of $\varepsilon$,} 
        \\
         \text{$A$ symmetric, positive definite uniformly 
               in $\overline{\Omega}$, $c \ge c_0 > 0$ on $\overline{\Omega}$.}
\end{array}
\end{equation}
 \begin{figure}
\psfragscanon
\psfrag{A01}{\tiny $A_{J_1}^{(1)} = A_{0}^{(1)}$}
\psfrag{A11}{\tiny $A_{1}^{(1)}$}
\psfrag{A21}{\tiny $A_{2}^{(1)}$}
\psfrag{A31}{\tiny $A_{3}^{(1)}$}
\psfrag{Ajm11}{\tiny $A_{J_1-1}^{(1)}$}
\psfrag{G11}{\tiny $\Gamma_1^{(1)}$}
\psfrag{G21}{\tiny $\Gamma_2^{(1)}$}
\psfrag{G31}{\tiny $\Gamma_3^{(1)}$}
\psfrag{Gjm11}{\tiny $\Gamma_{J_1-1}^{(1)}$}
\psfrag{O01}{\tiny $\omega_{J_1}^{(1)}$}
\psfrag{A02}{\tiny $A_0^{(2)}$}
\psfrag{A12}{\tiny $A_1^{(2)}$}
\psfrag{G02}{\tiny $\Gamma_1^{(2)}$}
\psfrag{G12}{\tiny $\Gamma_2^{(2)}$}
 \begin{overpic}[width=0.5\textwidth]{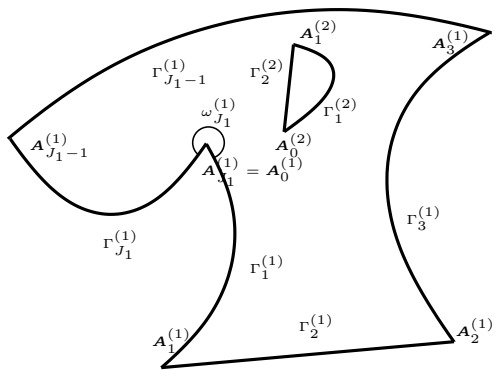}
\put(40,40){\tiny $\bA_{J_1}^{(1)} = \bA_{0}^{(1)}$}
\put(30,05){\tiny $\bA_{1}^{(1)}$}
\put(92,7){\tiny $\bA_{2}^{(1)}$}
\put(87,66){\tiny $\bA_{3}^{(1)}$}
\put(5,45){\tiny $\bA_{J_1-1}^{(1)}$}
\put(50,20){\tiny $\Gamma_1^{(1)}$}
\put(60,8){\tiny $\Gamma_2^{(1)}$}
\put(82,30){\tiny $\Gamma_3^{(1)}$}
\put(30,60){\tiny $\Gamma_{J_1-1}^{(1)}$}
\put(20,25){\tiny $\Gamma_{J_1}^{(1)}$}
\put(40,52){\tiny $\omega_{J_1}^{(1)}$}
\put(55,45){\tiny $\bA_0^{(2)}$}
\put(60,68){\tiny $\bA_1^{(2)}$}
\put(65,52){\tiny $\Gamma_1^{(2)}$}
\put(50,60){\tiny $\Gamma_2^{(2)}$}
\end{overpic}
\psfragscanoff
\caption{\label{fig:curvilinear-polygon} Example of a curvilinear polygon $\Omega$.}
\end{figure}
To design $H^1_0(\Omega)$-conforming,
$hp$-FE approximations of the solutions $\{ u_\eps : 0<\eps\leq 1 \}$ 
of \eqref{eq:sg-per}
under the analyticity assumptions \eqref{eq:sg-per-asmp} which converge
exponentially in the $\eps$-dependent energy 
norm $\| \circ \|_{\eps,\Omega}$ given by 
\begin{equation}\label{eq:epsnrm}
\| v \|_{\eps,\Omega}^2 
:= 
\eps^2 \| \nabla v \|^2_{L^2(\Omega)} + \| v \|^2_{L^2(\Omega)}\;,\quad v\in H^1(\Omega), 
\end{equation}
is the purpose of the present paper.
We prove in particular in Theorem~\ref{thm:singular-approx} 
a \emph{robust exponential convergence error bound}, i.e., 
all constants in the exponential convergence bound do not 
depend on $\eps > 0 $.

We again emphasize that we consider \eqref{eq:sg-per} for illustration. 
The scope of
the robust exponential $hp$ convergence rate bounds
below extends well beyond \eqref{eq:sg-per} to more 
complex, singularly perturbed PDE such as those in
\cite{ArnFalkBLRM1996,GMSShell1998,DaugeShellBl}.
\subsection{Geometric Preliminaries}
\label{sec:GeoPrel}
%
In \eqref{eq:sg-per-asmp}, the domain $\Omega \subset \bbR^2$ 
is a curvilinear polygonal domain,
schematically depicted in Fig.~\ref{fig:curvilinear-polygon}.
Specifically, 
the boundary $\partial \Omega$ is assumed to consist 
of $J \in \bbN$ closed  curves $\Gamma^{(i)}$. 
Each curve $\Gamma^{(i)}$ in turn is assumed to be partitioned 
into finitely many open, disjoint, \emph{analytic arcs} $\Gamma^{(i)}_j$, 
in the sense that there are numbers $J_i \in \bbN$ such that
$$
\overline{\Gamma^{(i)}} = \bigcup_{j=1}^{J_i} \overline{\Gamma^{(i)}_j}\;,
\quad i=1,\hdots,J\;.
$$
Here, \emph{analytic arcs} $\Gamma^{(i)}_j$
admit \emph{nondegenerate, analytic parametrizations}, i.e.,
$$
\Gamma^{(i)}_j
=
\left\{ {\mathbf x}^{(i)}_j (\theta)  | \theta \in (0,1) \right\}\;,
\quad 
i=1,...,J, \quad j=1,...,J_i \;.
$$
with the coordinate functions 
$x_j^{(i)}$, $y_j^{(i)}$ of ${\mathbf x}^{(i)}_j(\theta) = (x_j^{(i)}(\theta),y_j^{(i)}(\theta))$
assumed to be (real) analytic functions of $\theta \in  [0,1]$ and such that 
$$
\min_{\theta \in [0,1]}
\left\{
\left| \frac{d}{d\theta} x_j^{(i)}(\theta) \right|^2 
+ 
\left| \frac{d}{d\theta} y_j^{(i)}(\theta) \right|^2 
\right\}
> 0
\quad 
j=1,...,J_i, 
\quad 
i=1,...,J
\;.
$$
We denote $\partial \Gamma^{(i)}_j = \{ \bA^{(i)}_{j-1}, \bA^{(i)}_j\}$ 
where
$\bA^{(i)}_{j-1} = {\mathbf x}_j^{(i)}(0)$ and $\bA^{(i)}_j = {\mathbf x}_j^{(i)}(1)$. 
For each boundary component $\Gamma^{(i)}$,
we enumerate $\{ \bA^{(i)}_j \}_{j=1}^{J_i}$ 
cyclically, counterclockwise by indexing with $j$ modulo $J_i$, 
thereby identifying in particular $\bA^{(i)}_{j} := \bA^{(i)}_{j+J_i}$.
%
The interior angle at $A_j^{(i)}$ is denoted $\omega_j^{(i)} \in (0,2\pi)$. 
For notational simplicity, we assume henceforth that $J=1$, 
i.e., $\partial\Omega$ consists of a single component of connectedness. 
We write $\bA_j = \bA_j^{(1)}$, $\Gamma_j$ for $\Gamma_j^{(1)}$, 
$x_j = x_j^{(1)}$, $y_j = y_j^{(1)}$. 
%
Then, in a vicinity of any point $x\in \partial \Omega$, a curvilinear polygon
$\Omega$
is \emph{analytically diffeomorphic} to either a half-space, or to a plane 
sector with vertex situated at the origin.
\subsection{Contributions}
\label{S:Contrib}
The principal contribution of the present paper is 
\emph{robust, exponential convergence of a class of $hp$-FEM approximations}
of the singular perturbation problem \eqref{eq:sg-per} under the 
analyticity assumptions \eqref{eq:sg-per-asmp}, in curvilinear polygons 
$\Omega$ as described in Section~\ref{sec:GeoPrel}.
The convergence proof 
in Sections~\ref{S:AppRefElt} and \ref{sec:sing-approx-geo-mesh}
is done under a \emph{scale resolution condition} that corresponds, 
roughly speaking, to the $hp$-FE partitions resolving the shortest length
scale that occurs in the solution $u_\eps$, and it
is strongly based on \emph{parameter-explicit, analytic regularity results} for the
parametric solution family $\{ u_\eps: 0< \eps \leq 1 \}\subset H^1_0(\Omega)$
of \eqref{eq:sg-per}, which were obtained by one of the authors in \cite{melenk02}.
Importantly, and distinct from earlier work on robust exponential $hp$-FE convergence for 
\eqref{eq:sg-per}, a \emph{patch-based convergence proof} is developed which 
also enables an algorithmic, patchwise-structured anisotropic mesh specification,
described in Section~\ref{sec:MacTriGBLMes}, which is applicable in domains
$\Omega$ of the generality admitted in Section~\ref{sec:GeoPrel}. 
As we 
show in numerical experiments in Section~\ref{S:NumExp}, the
recently developed, automatic mesh generator Netgen \cite{netgen1} does 
produce automatically, i.e., without ``expert pruning'', 
anisotropic, geometric meshes in $\Omega$ with the required boundary and 
corner refinement that satisfy the requirement of the present robust, 
exponential convergence bounds.
\subsection{Outline of this paper}
\label{S:Struct}
In Section~\ref{sec:MacTriGBLMes},
we introduce the geometric mesh families in $\Omega$ 
that underlie our robust exponential convergence results. 
The meshes require concurrent anisotropic geometric partitions 
of $\Omega$ towards the boundary $\partial \Omega$ 
and isotropic geometric refinement towards the corners $\bA_j$.
We define these meshes in a macro-element fashion based on 
an initial, coarse regular partition of the physical domain $\Omega$
into a \emph{macro-triangulation} consisting of a 
\emph{regular, finite, and fixed partition of the physical domain $\Omega$} 
that is described in Section~\ref{sec:MacroTri}.
Its elements will be referred to
as \emph{(macro) patches} and are assumed to be images of a finite number of
quadrilateral reference patches under \emph{analytic patch maps}.
The reference patches are key in ensuring robust exponential convergence
rate bounds of our $hp$-FEM approximation.
Following earlier work \cite{M97_738,MS99_325,melenk02}, 
we consider so-called \emph{geometric boundary layer meshes}, 
denoted by $\Tg$.
We introduce these in Def.~\ref{def:bdylayer-mesh}. 
Unlike the so-called ``two-element'' meshes 
considered earlier in \cite{SSX98_321,schwab-suri-xenophontos98}, 
which are designed to approximate only a single small scale
in a robust way, the presently considered geometric boundary layer meshes
afford robust exponential convergence rates of $hp$-FEM also 
in the presence of multiple physical length scales. 
This situation arises in a number of applications
(e.g., \cite{DaugeShellBl,BMNOSS17_732}).
Section~\ref{S:GeoBL} introduces the geometric boundary
layer mesh, first on the reference patches, and then in 
Section~\ref{sec:generate-bdy-layer-mesh} in curvilinear polygons.

Section~\ref{S:AppRefElt} is devoted to the polynomial approximation 
of functions on geometric boundary layer meshes.  
The approximation is based on Gauss-Lobatto interpolation operators in 
the reference triangle, indicated by $\triangle$, and in the 
reference square, indicated by $\Box$, 
in Section~\ref{sec:approx-reference-element}.
These are then assembled into (nodal) patch approximation operators on the 
geometric boundary layer patches in Section~\ref{sec:approx-reference-patches}.
Then, the robust $hp$-approximation of corner singularities and 
boundary layer functions is proved. 
These functions are the key solution components 
of singularly perturbed problems in polygons such as the model problem
(\ref{eq:sg-per}). 

Section~\ref{sec:sing-approx-geo-mesh} assembles the patch $hp$-approximation
results and interpolants into a global approximation operator, 
and presents the main result of this paper: robust exponential convergence rate
bounds for the global $hp$-interpolation of the solution of (\ref{eq:sg-per}) 
assembled from the patch approximations.

Section~\ref{S:NumExp} presents several illustrative numerical experiments
in polygons, which underline the theoretical results. They are based 
on $hp$-meshes that are furnished by the automated mesh generation 
procedure Netgen \cite{netgen1} to make the point that the
somewhat technical construction of geometric boundary layer meshes
is, in principle, available and feasible automatically.
Appendix~\ref{AppA:AnChVar} contains proofs of auxiliary results on 
analytic regularity estimates under analytic changes of variables.
Appendix~\ref{sec:AppB} collects (mostly known)
results on univariate polynomial approximation 
for convenient reference in the main text.
\subsection{Notation}
\label{S:Notat}
We employ standard notation for Sobolev spaces. 
Constants $C$, $\gamma$, $b > 0$ may be different in different instances.
However, they will be independent of parameters of interest such as 
$\varepsilon \in (0,1]$ and the polynomial degree $q$. 
The notation $\nabla^n u$ stands for the collection of all partial derivatives of order $n$
and $|\nabla^n u|^2 = \sum_{|\alpha| = n}\frac{n!}{\alpha!} |D^\alpha u|^2$. 
Points in ${\mathbb R}^2$ will be denoted depending on the context as either
$\bx = (x,y)$ (physical domain) or $\widetilde{\bx}= (\xt,\yt)$ (patch domains) 
or $\widehat{\bx} = (\xh,\yh)$ (reference domains).
We abbreviate $\{\xt = 0\}$, $\{\yt = 0\}$, and $\{\xt = \yt\}$ 
for the line segments $\{\widetilde \bx = (0,\yt)\,|\, 0 < \yt < 1\}$,  
$\{\widetilde \bx = (\xt,0)\,|\, 0 < \xt < 1\}$, and 
$\{\widetilde \bx = (\xt,\yt)\,|\, 0 < \xt = \yt < 1\}$, respectively. 
We write $\{\yt\leq \xt\}$ for 
$\{\widetilde \bx = (\xt,\yt)\,|\,  0 < \yt \leq  \xt < 1\}$. 
The origin will be denoted $\bO=(0,0)$. 
The reference square and triangle are $\widehat{S}:= (0,1)^2$ and 
$\widehat T:= \{(\xh,\yh)\,|\, 0 < \xh < 1, 0 < \yh < \xh\}$. 
The region covered by the reference patch will be denoted 
$\widetilde S:= \widehat S = (0,1)^2$. It will be convenient 
to introduce $\widetilde T:= \widehat T$ and 
set $\Tf:= \widetilde S \setminus \overline{\widetilde T}$. 
We denote the space of polynomials of total degree $q$ by 
$\bbP_q = \operatorname{span} \{ x^i  y^j\,|\, 0 \leq i + j \leq q\}$; 
the tensor product space $\bbQ_q$ is 
$\bbQ_q = \operatorname{span} \{ x^i  y^j\,|\, 0 \leq i , j \leq q\}$.
\section{Macro triangulation. Geometric boundary layer mesh}
\label{sec:MacTriGBLMes}
\begin{figure}[h]
\psfragscanon
\psfrag{T}{$T$}
\psfrag{B}{$B$}
\psfrag{M}{$M$}
\psfrag{C}{$C$}
\begin{overpic}[width=0.25\textwidth]{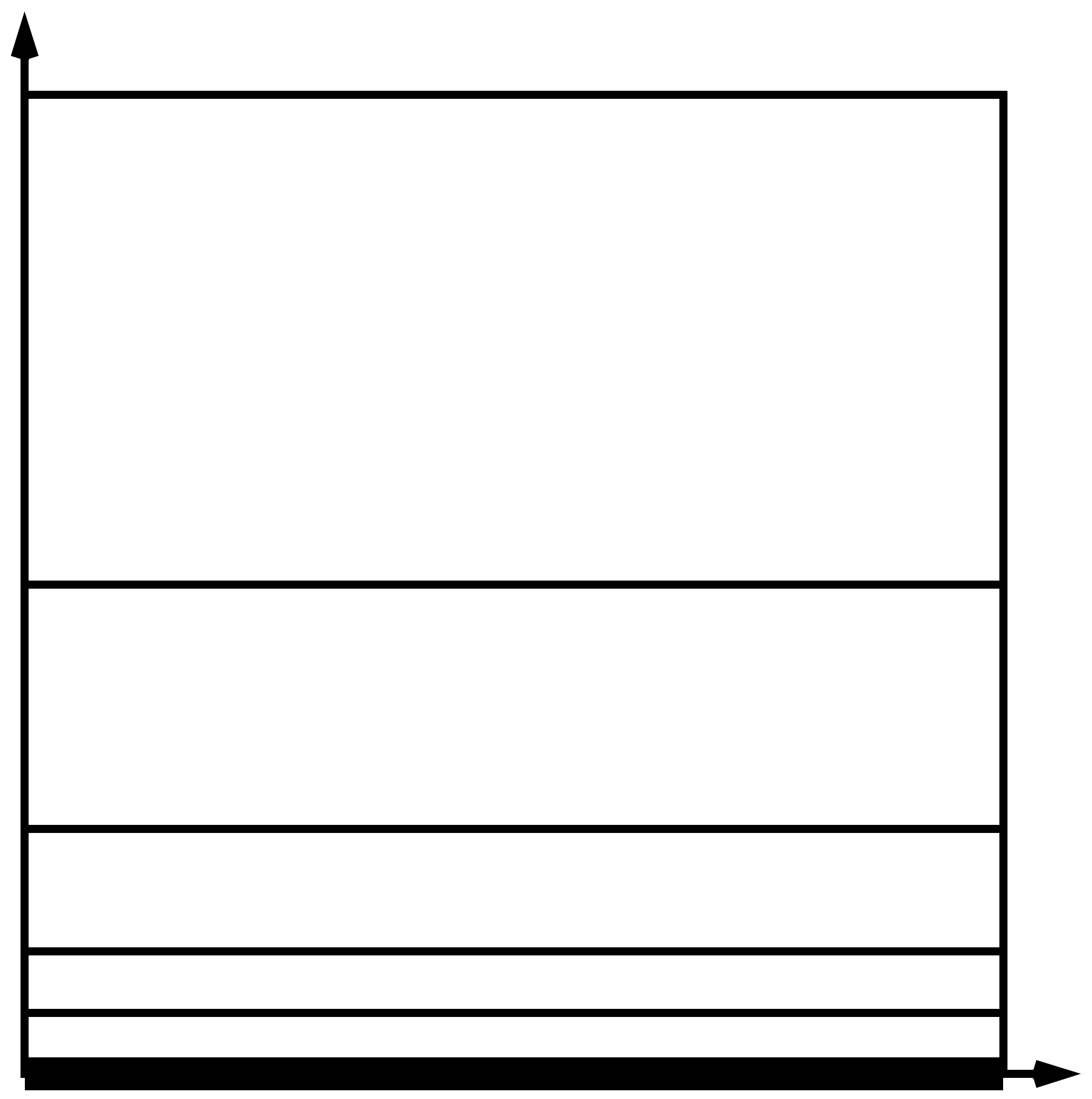}
\put(45,70){$ \widetilde{\calT}^{\BL,L}_{geo,\sigma}$}
\put(-6,90){$\yt$}
\put(95,-5){$\xt$}
\end{overpic}
\hfill 
\begin{overpic}[width=0.25\textwidth]{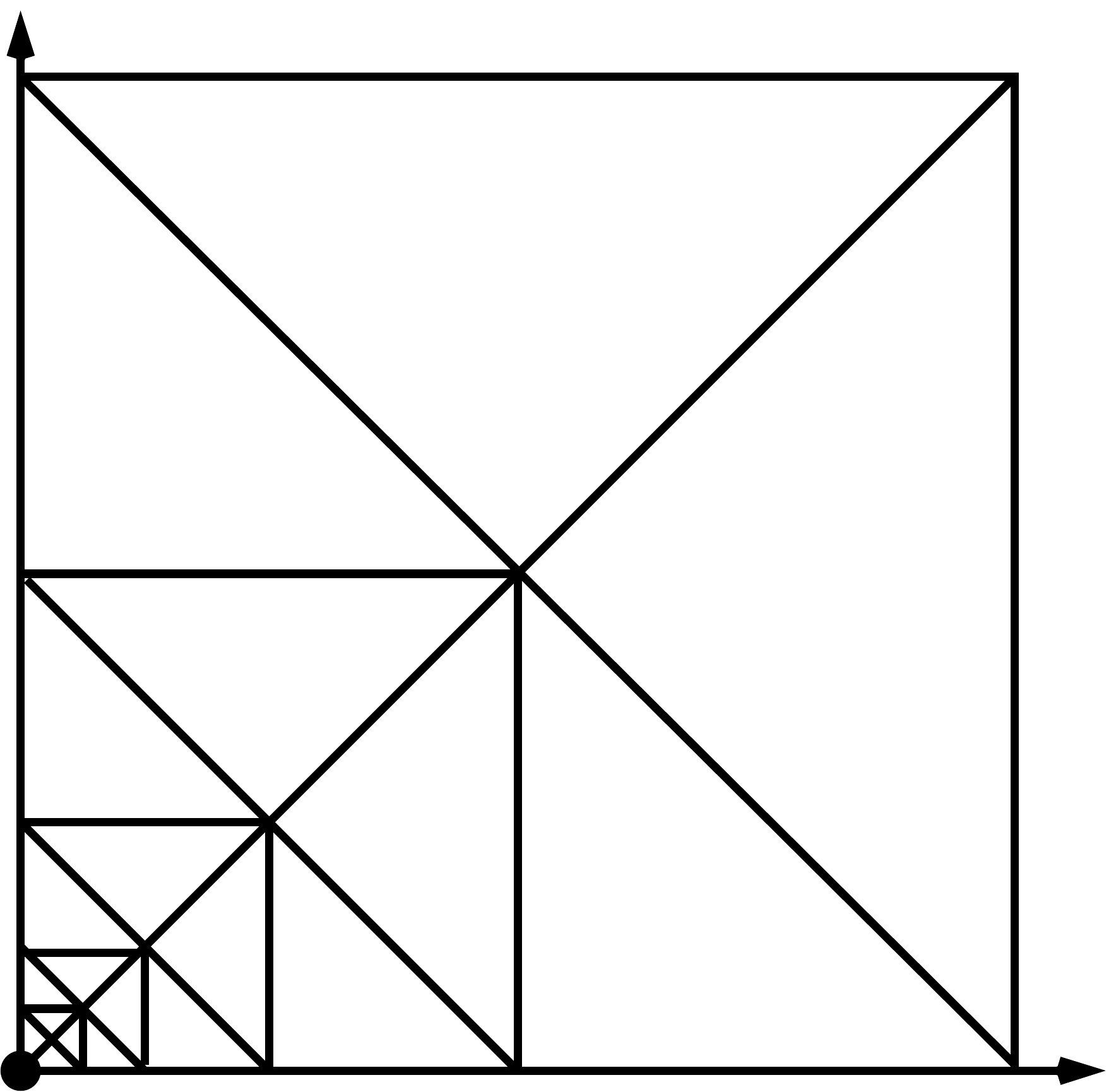}
\put(40,70){$ \widetilde{\calT}^{\Co,n}_{geo,\sigma}$}
\put(-6,90){$\yt$}
\put(95,-7){$\xt$}
\end{overpic}
\hfill 
\begin{overpic}[width=0.25\textwidth]{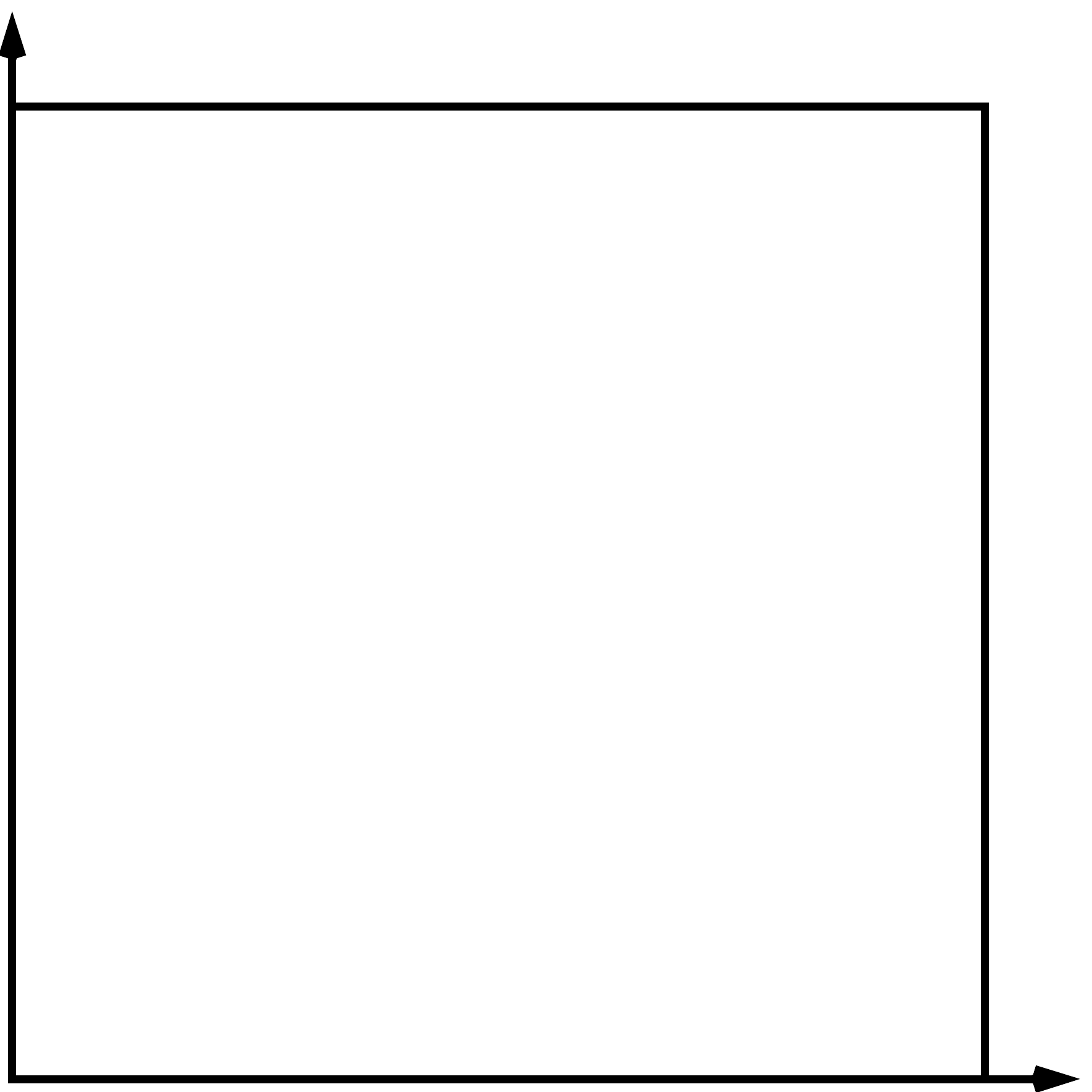}
\put(25,70){trivial patch}
\put(-6,90){$\yt$}
\put(95,-8){$\xt$}
\end{overpic}
\begin{overpic}[width=0.45\textwidth]{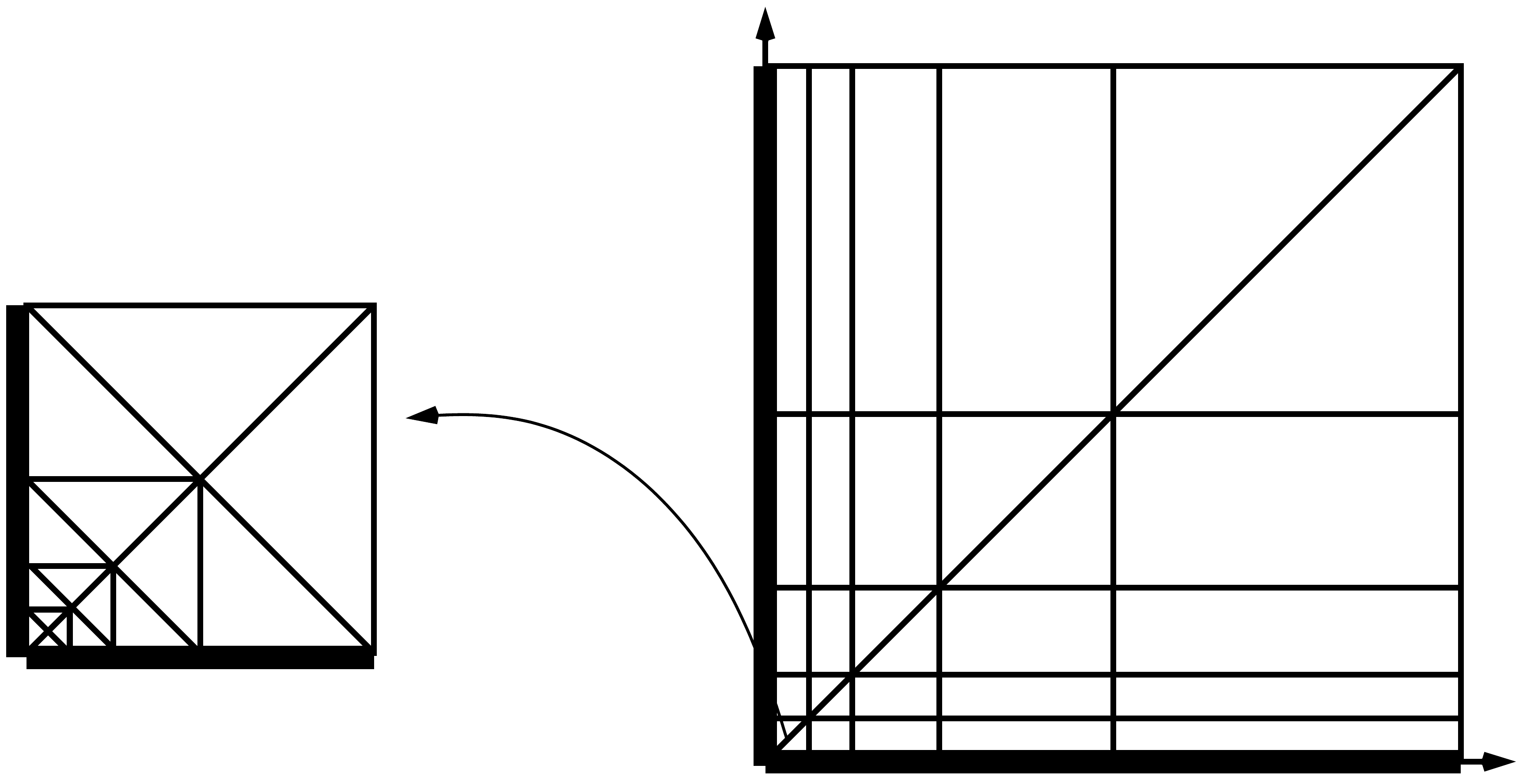}
\put(70,40){$ \widetilde{\calT}^{\Te,L,n}_{geo,\sigma}$}
\put(0,0){$ \widetilde S_1 = (0,\sigma^L)^2$}
\put(45,45){$\yt$}
\put(95,-5){$\xt$}
\end{overpic}
\hfill 
\begin{overpic}[width=0.45\textwidth]{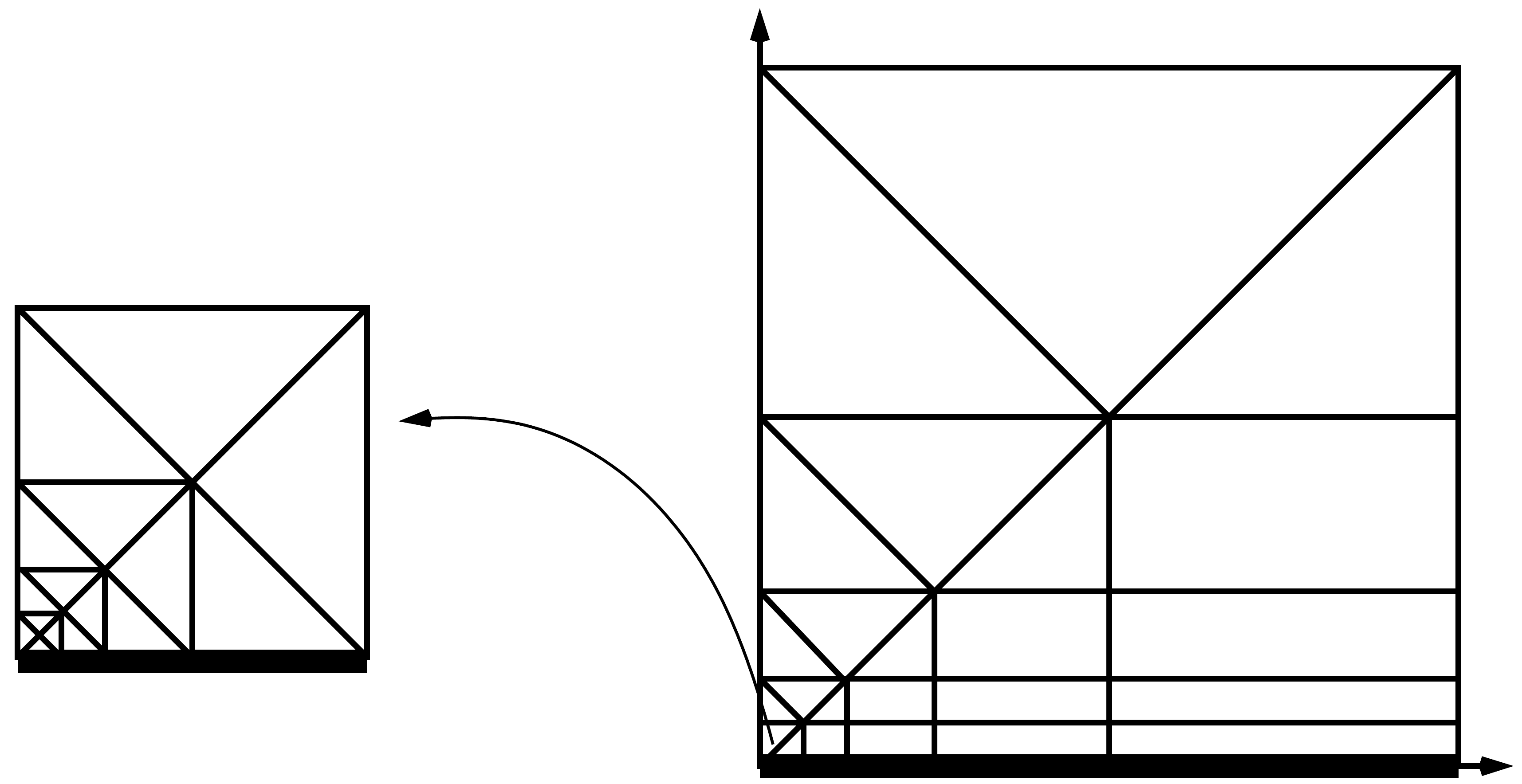}
\put(70,40){$ \widetilde{\calT}^{\Mi,L,n}_{geo,\sigma}$}
\put(0,0){$ \widetilde S_1 = (0,\sigma^L)^2$}
\put(45,45){$\yt$}
\put(95,-5){$\xt$}
\end{overpic}
\psfragscanoff
\caption{
\label{fig:patches} 
Catalog ${\mathfrak P}$ of mesh patches of geometric boundary layer meshes $\tilde{\calT}_{geo}$.
Top row: boundary layer patch $\widetilde{\calT}^{\BL,L}_{geo,\sigma}$
with $L$ layers of geometric refinement towards $\{\yt=0\}$; 
	corner patch $\widetilde{\calT}^{\Co,n}_{geo,\sigma}$
with $n$ layers of geometric refinement towards $(0,0)$; trivial patch. 
Bottom row: 
	tensor patch $\widetilde{\calT}^{\Te,L,n}_{geo,\sigma}$
with $n$ layers of isotropic geometric refinement towards $(0,0)$ and 
$L$ layers of anisotropic geometric refinement towards $\{\xt = 0\}$ and $\{\yt=0\}$; 
	mixed patch $\widetilde{\calT}^{\Mi,L,n}_{geo,\sigma}$
with $L$ layers of refinement towards $\{y=0\}$ and $n$ layers of refinement towards $(0,0)$. 
Geometric entities shown in boldface indicate 
parts of $\partial \widetilde S$ that are mapped to $\partial\Omega$.  
Patch meshes are transported into the curvilinear polygon $\Omega$ shown 
in Fig.~\ref{fig:curvilinear-polygon} via analytic patch maps $F_{K^{\M}}$.
}%
\end{figure}
\begin{figure}[h]
\begin{overpic}[width=0.4\textwidth]{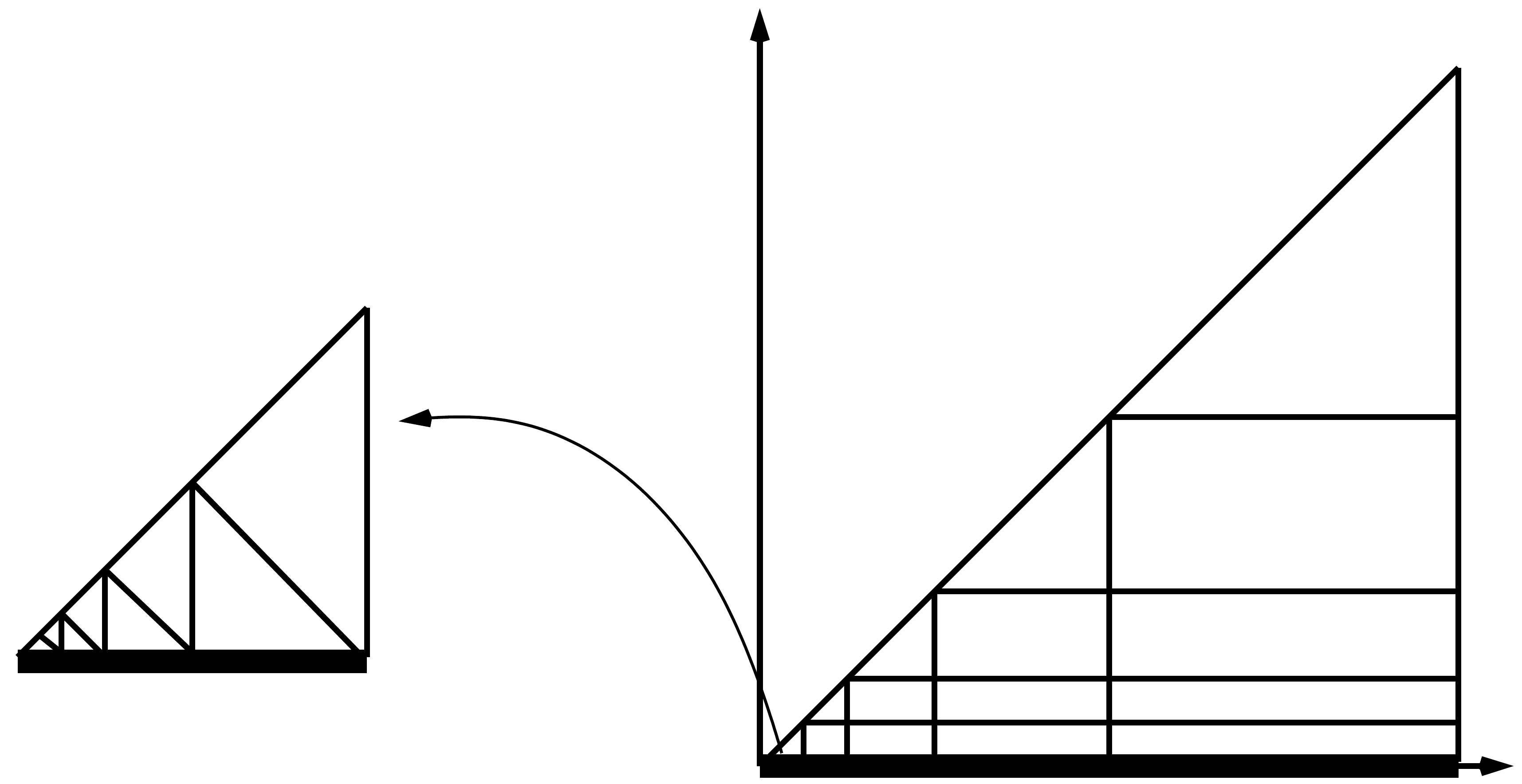}
\put(60,35){$ \widetilde{\calT}^{\Mi,\text{\rm half},L,n}_{geo,\sigma}$}
\put(0,38){\small $\widetilde T_1:= \{0 <\xt < \sigma^L,$}
\put(16,32){\small $0 <\yt < \xt\}$}
\put(80,15){$\widetilde T$}
\put(98,-4){\small $\xt$}
\put(52,45){\small $\yt$}
\end{overpic}
\hfill 
\begin{overpic}[width=0.2\textwidth]{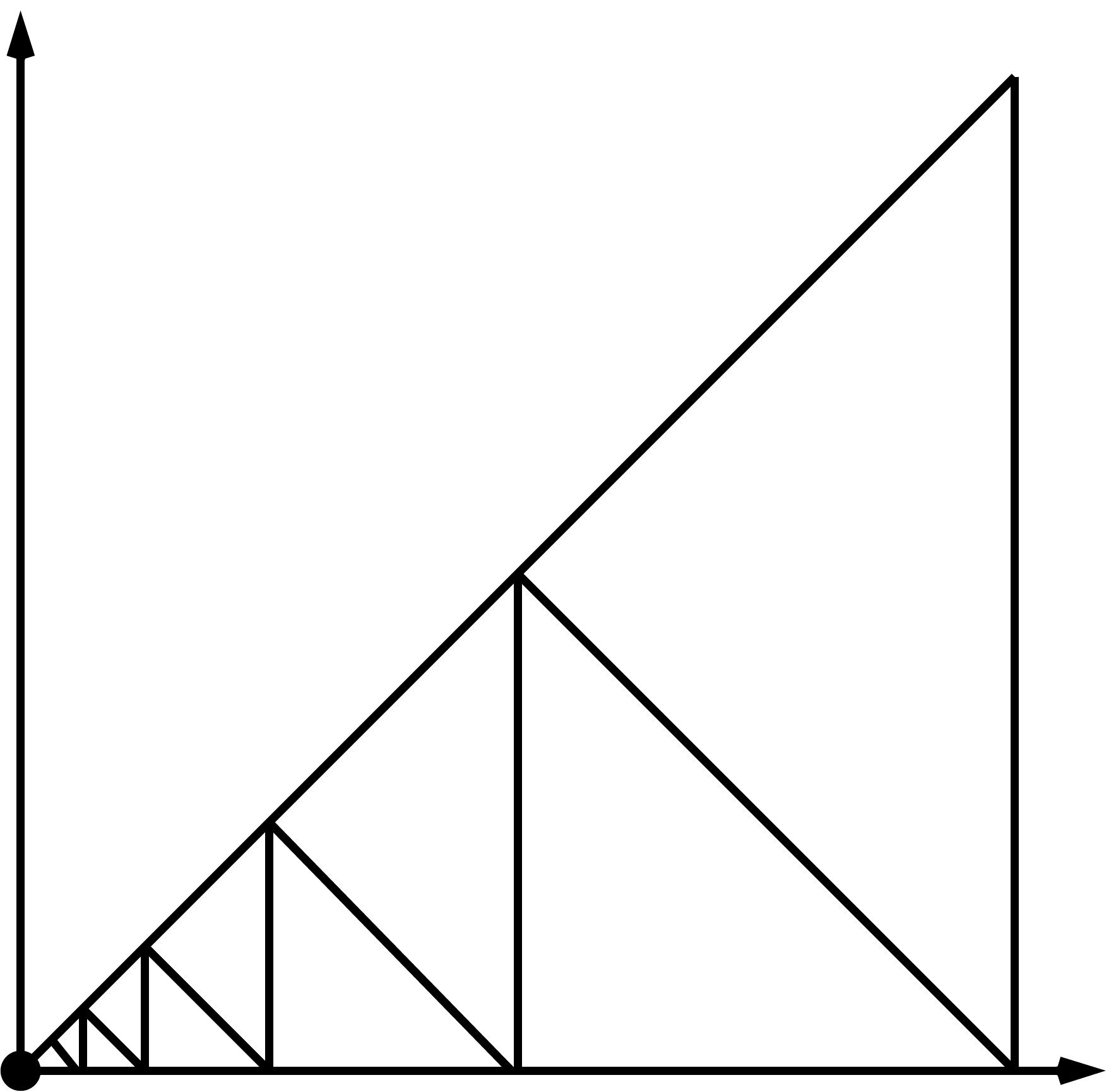}
\put(40,70){$ \widetilde{\calT}^{\Co,\text{\rm half},n}_{geo,\sigma}$}
\put(80,15){$\widetilde T$}
\put(95,-8){\small $\xt$}
\put(-6,90){\small $\yt$}
\end{overpic}
\hfill 
\begin{overpic}[width=0.2\textwidth]{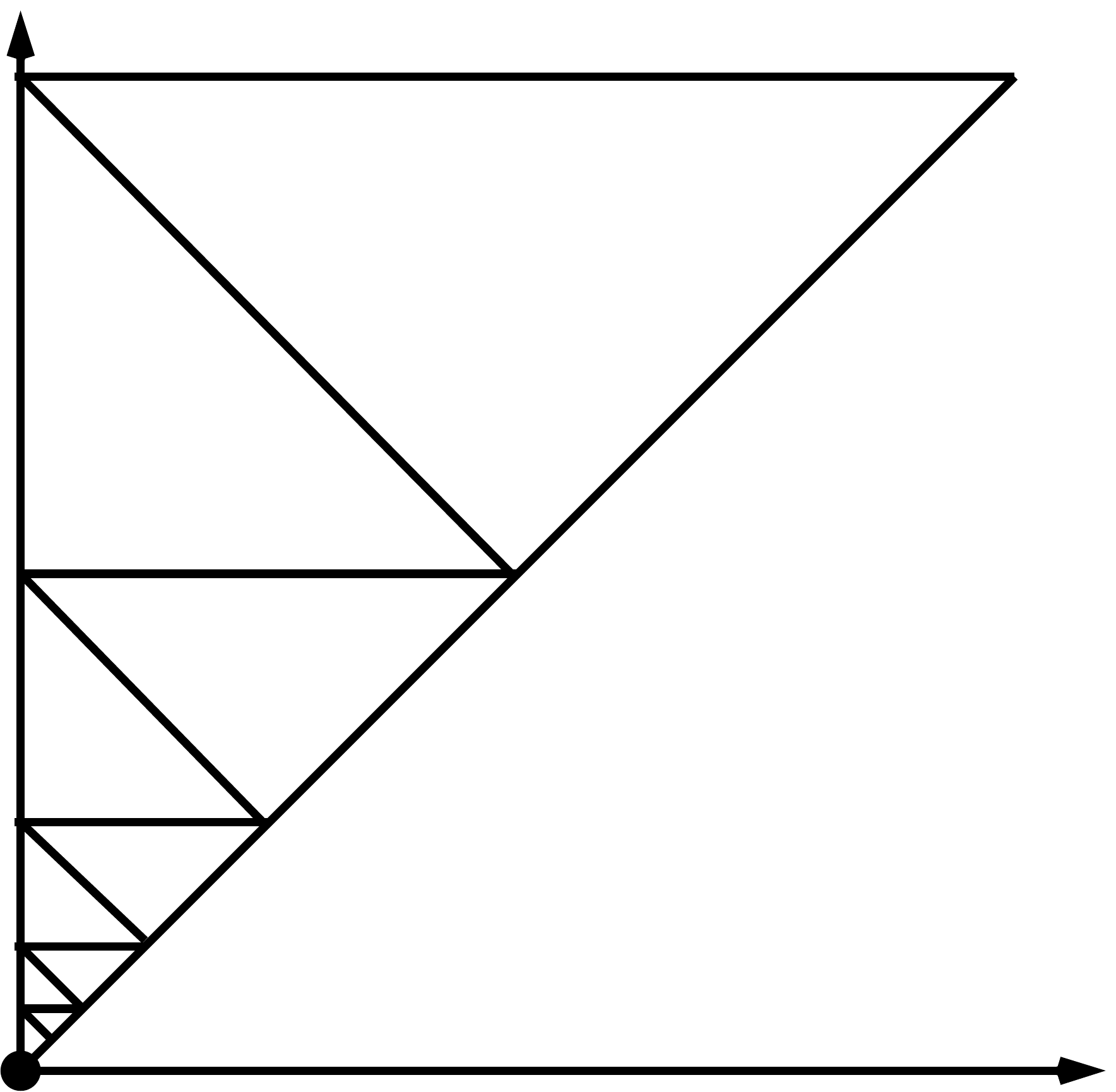}
\put(45,70){$ \widetilde{\calT}^{\Co,\text{\rm half},\text{\rm flip},n}_{geo,\sigma}$}
\put(95,-8){\small $\xt$}
\put(-7,90){\small $\yt$}
\end{overpic}
\caption{\label{fig:half-figures} From left to right: half-patches 
$ \widetilde{\calT}^{\Mi,\text{\rm half},L,n}_{geo,\sigma}$, 
$ \widetilde{\calT}^{\Co,\text{\rm half},n}_{geo,\sigma}$, and 
$ \widetilde{\calT}^{\Co,\text{\rm half},\text{\rm flip},n}_{geo,\sigma}$. They are  
given by the elements of 
$ \widetilde{\calT}^{\Co,n}_{geo,\sigma}$ and $ \widetilde{\calT}^{\Mi,L,n}_{geo,\sigma}$
below the diagonal $\{\yt = \xt\}$ and the mirror image of 
$\widetilde{\calT}^{\Co,\text{\rm half},n}_{geo,\sigma}$ at the diagonal $\{\yt = \xt\}$. 
}
\end{figure}
\begin{figure}
\begin{overpic}[width=0.25\textwidth]{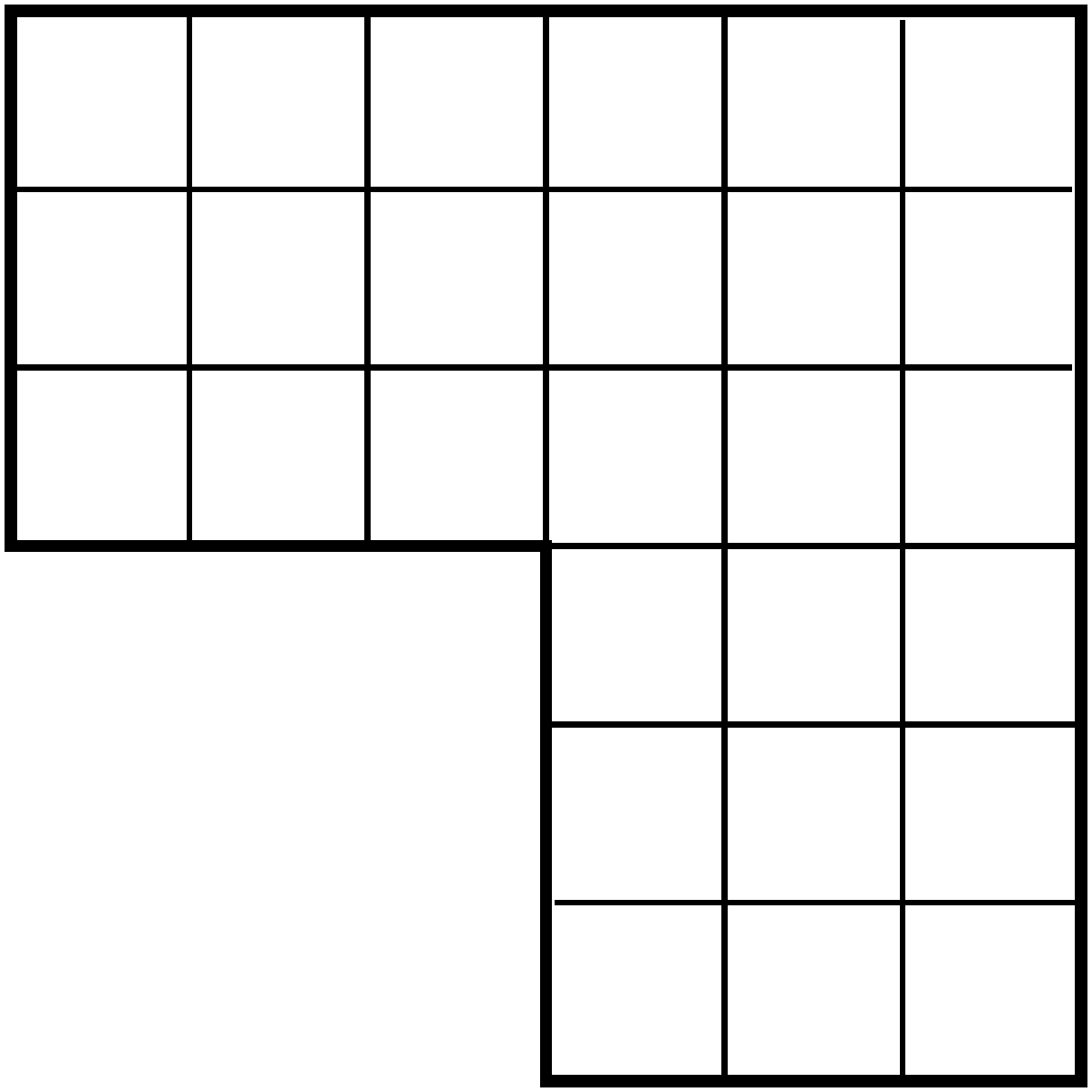}
\put(8,55){T}
\put(25,55){B}
\put(40,55){M}
\put(55,55){C}
\put(55,40){M}
\put(55,22){B}
\put(55,08){T}
\put(75,08){B}
\put(90,08){T}
\put(90,25){B}
\put(90,40){B}
\put(90,55){B}
\put(90,75){B}
\put(90,90){T}
\put(75,90){B}
\put(55,90){B}
\put(40,90){B}
\put(25,90){B}
\put(08,90){T}
\put(08,75){B}
\end{overpic}
\hfill 
\begin{overpic}[width=0.5\textwidth]{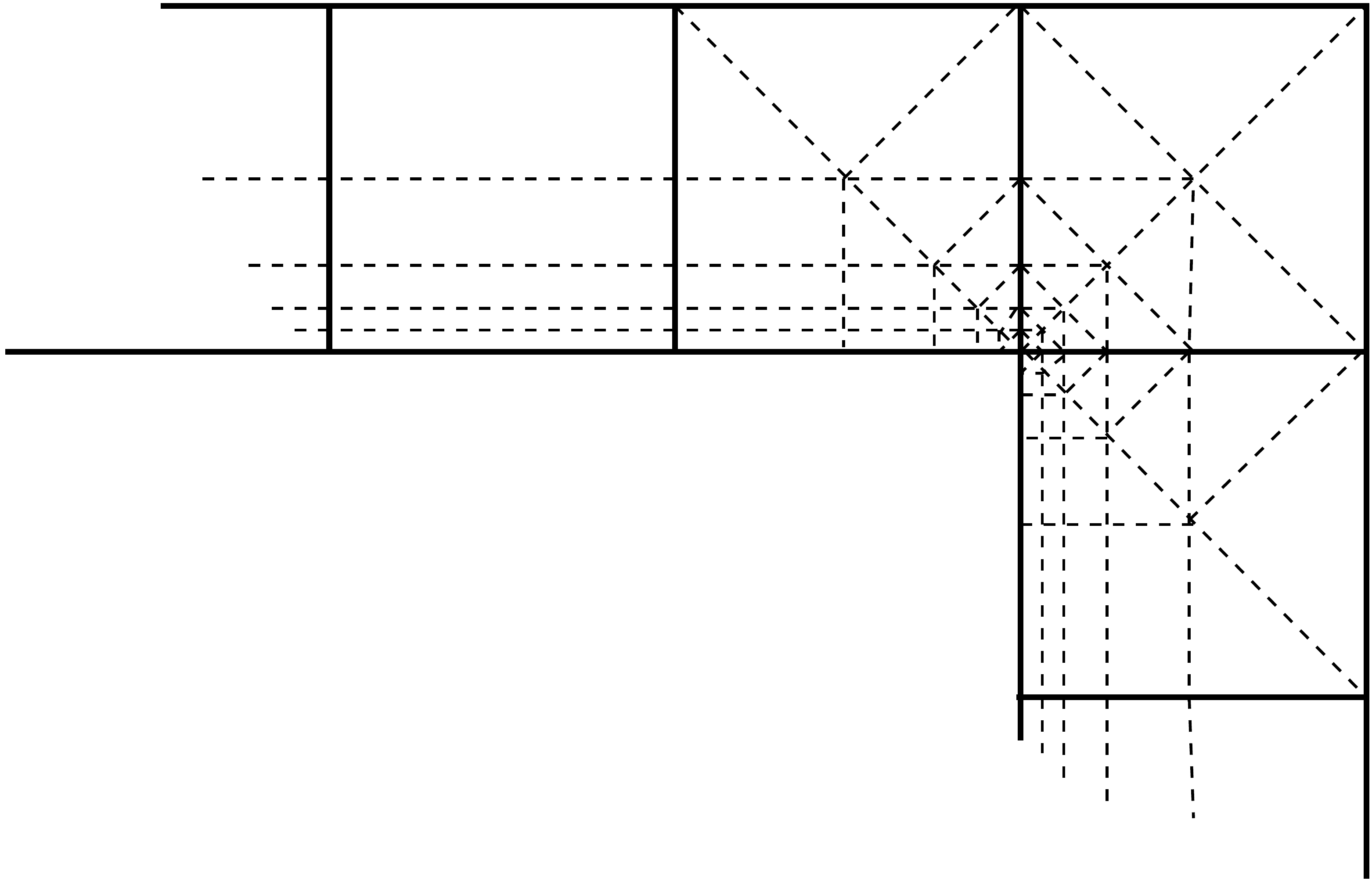}
\put(15,55){$\Omega$}
\put(35,50){B}
\put(55,50){M}
\put(85,50){C}
\put(95,25){M}
\put(40,30){$\partial\Omega$}
\put(65,30){$A_j$}
\end{overpic}

\caption{\label{fig:patch-examples} 
Left panel: example of an L-shaped domain decomposed into 27 patches 
($T$, $B$, $M$, $C$ indicate tensor, boundary layer, mixed, corner patches;
empty squares stand for trivial patches). 
Right panel: Zoom-in near the reentrant corner. 
}
\end{figure}
Our robust exponentially convergent
$hp$ approximation is based on
so-called \emph{geometric boundary layer meshes}, 
denoted by $\calT^{L,n}_{geo,\sigma}$. 
To facilitate our error analysis, the 
$\calT^{L,n}_{geo,\sigma}$ are generated as push-forwards 
of a small number of so-called \emph{reference patches}, which 
are partitions of $\widetilde S$, under the \emph{patch maps}. 
The images of $\widetilde S$ under the patch maps form a (coarse) 
\emph{macro triangulation} of $\Omega$ satisfying some minimal 
conditions, which are described in Section~\ref{sec:MacroTri}.
This concept was also used in the context of $hp$-FEM 
for singular perturbations in 
\cite[Sec.~{3.3.3}]{melenk02} and in 
\cite{melenk-xenophontos16,FstmnMM_hpBalNrm2017}.
\subsection{Macro triangulations}
\label{sec:MacroTri}
We assume given a \emph{fixed macro-triangulation}
${\mathcal T}^{\M} = \{K^{\M} \,|\, K^{\M} \in {\mathcal T}^{\M}\}$
of $\Omega$ consisting of curvilinear quadrilaterals $K^{\M}$
with bijective element maps $F_{K^{\M}}:\widetilde S \rightarrow K^{\M}$
that are analytic in $\overline{\widetilde S}$ 
and that in addition satisfy the usual compatibility conditions.
I.e., the partition ${\mathcal T}^{\M}$ does not have hanging nodes and, 
for any two distinct elements $K_1^{\M}, K_2^{\M} \in {\mathcal T}^{\M}$ that
share an edge $e$, their respective element maps induce compatible parametrizations
of $e$ (cf., e.g.,  \cite[Def.~{2.4.1}]{melenk02} for the precise conditions). 

Each element of the fixed macro-triangulation ${\mathcal T}^{\M}$
is further subdivided according to one of the 
refinement patterns in Definition~\ref{def:admissible-patterns}
(see also \cite[Sec.~{3.3.3}]{melenk02} or \cite{FstmnMM_hpBalNrm2017}). 
The actual triangulation
is then obtained by transplanting refinement patterns on the square reference 
patch $\widetilde{S}$ into the physical domain $\Omega$
by the element maps $F_{K^\M}$ of the macro-triangulation
resulting in the \emph{physical triangulation} ${\mathcal T}$.  
For any element $K\in {\mathcal T}$, 
the element maps $F_K :\widehat K \rightarrow K$ 
are then concatenations of affine maps 
$A_K:\widehat K \rightarrow \widetilde K$, which realize 
the mapping from $\widehat K \in \{\widehat S, \widehat T\}$ to the elements 
in the patch refinement pattern, and the analytic patch maps $F_{K^\M}$. 
That is, the element maps have the form $F_K = F_{K^\M} \circ A_K$
for an affine $A_K$. Throughout the article, we will denote
by $\widehat K \in \{\widehat S, \widehat T\}$ the reference element
corresponding to an element $K$ of a triangulation, and we will denote by $\Kt$
the elements of the triangulation of the reference patterns.  Points 
in the reference patch $\widetilde S$ are denoted 
$\widetilde{\bx}=(\xt,\yt) \in \widetilde S$; variables $(x,y)$ are employed 
to indicate points in $\Omega$, and $\widehat{\bx} = (\xh,\yh)$ are used 
for points of the reference square $\widehat S$ 
and reference triangle $\widehat{T}$. 
\subsection{Refinement patterns in the reference configuration (patch catalog ${\mathfrak P}$)}
\label{sec:refinement-patterns}
The admissible \emph{patch refinement patterns} are collected
in a catalog ${\mathfrak P}$ and are
depicted in Fig.~\ref{fig:patches}. 
They  
are based on geometric refinement towards a 
vertex and/or an edge; 
the parameter $L$ controls the number of 
layers of refinement towards an edge whereas the 
natural number
$n \ge L$ measures the 
number of geometric refinements towards vertices. 
\begin{definition}[catalog ${\mathfrak P}$ of refinement patterns]
\label{def:admissible-patterns}
Given $\sigma \in (0,1)$, $L$, $n \in {\mathbb N}_0$ with $n \ge L$ 
the catalog ${\mathfrak P}$ of admissible refinement patterns
consists of the following patches: 
\begin{enumerate}
\item
The \emph{trivial patch}: The reference square $\widetilde S$ is not further refined. 
The corresponding triangulation of $\widetilde S$ consists of the single element: 
$\widehat{\mathcal T} = \{\widetilde S\}$. 
\item
The \emph{geometric boundary layer patch $\widetilde{\calT}^{\BL,L}_{geo,\sigma}$}: 
$\widetilde S $ is refined anisotropically towards 
$\{\yt =0\}$ into $L$ elements as depicted
in Fig.~\ref{fig:patches} (top left). The mesh 
$\widetilde{\calT}^{\BL,L}_{geo,\sigma}$ 
is characterized by the nodes $(0,0)$, $(0,\sigma^i)$, $(1,\sigma^i)$,
$i=0,\ldots,L$, and the corresponding rectangular elements generated by these nodes.
\item
The \emph{geometric corner patch $\widetilde{\calT}^{\Co,n}_{geo,\sigma}$}: 
$\widetilde S $ is refined isotropically towards $(0,0)$ as depicted 
in Fig.~\ref{fig:patches} (top middle). Specifically,
the reference geometric corner patch mesh $\widetilde{\calT}^{\Co,n}_{geo,\sigma}$
in $\widetilde S$ with geometric refinement towards $(0,0)$ and $n$ layers
is given by triangles and based on the nodes
$(0,0)$, and $(0,\sigma^i)$, $(\sigma^i,0)$,
$(\sigma^i,\sigma^i)$, $i=0,1,\ldots,n$. 
\item
The \emph{tensor product patch $\widetilde{\calT}^{\Te,L,n}_{geo,\sigma}$}: 
$\widetilde S$ is triangulated in $\widetilde S_1:= (0,\sigma^L)^2$ and 
$\widetilde S_2:= \widetilde S \setminus \widetilde S_1$ separately as 
depicted in Fig.~\ref{fig:patches} (bottom left). 
The triangulation of $\widetilde S_1$ is a scaled version of 
$\widetilde{\calT}^{\Co,n-L}_{geo,\sigma}$ and based on the 
nodes $(0,\sigma^i)$, $(\sigma^i,0)$, $i=L,\ldots,n$. The triangulation of 
$\widetilde S_2$ is based on the nodes $(\sigma^i,\sigma^j)$, $i$, $j=0,\ldots,L$. 
\item
The \emph{mixed patches $\widetilde{\calT}^{\Mi,L,n}_{geo,\sigma}$}:
The triangulation consists of both anisotropic elements and isotropic elements
as depicted in Fig.~\ref{fig:patches} (bottom right) and is obtained 
by triangulating the regions 
$\widetilde S_1:= (0,\sigma^L)^2$, 
$\widetilde S_2:= \bigl( \widetilde S \setminus \widetilde S_1\bigr)  \cap \{\yt \leq \xt\}$, 
$\widetilde S_3:= \widetilde S \setminus (\widetilde S_1 \cup \widetilde S_2)$ 
separately. 
The set 
$\widetilde S_1$ is a scaled version of $\widetilde{\calT}^{\Co,n-L}_{geo,\sigma}$ 
based on the 
nodes $(0,\sigma^i)$, $(\sigma^i,0)$, $i=L,\ldots,n$. 
The triangulation of $\widetilde S_2$ is based on the nodes 
$(\sigma^i,0)$, $(\sigma^i,\sigma^{j})$, $0 \leq i \leq L$, $i \leq j \leq L$
and consists of rectangles and triangles, 
and only the triangles abut on the diagonal $\{\xt=\yt\}$. 
The triangulation of $\widetilde S_3$ consists of triangles only 
and is based on the nodes $(0,\sigma^i)$, $(\sigma^i,\sigma^i)$, $i=0,\ldots,L$. 
\end{enumerate}
\end{definition}
\begin{remark}
\label{remk:further-refinement-patterns} 
We kept the catalog ${\mathfrak P}$ of admissible patch refinement patterns in 
Definition~\ref{def:admissible-patterns} small in order
to reduce the number of cases to be discussed for the $hp$-FE error bounds.
A larger number of refinement patterns provides 
greater flexibility in the mesh generation. 
In particular, 
the reference patch meshes of Def.~\ref{def:admissible-patterns} 
do not contain general quadrilaterals but only rectangles; 
this restriction is not 
essential but simplifies the $hp$-FE error analysis.
Also certain types of anisotropic triangles (e.g., splitting anisotropic 
rectangles along the diagonal), which are altogether excluded in 
the present analysis, could be accommodated at the expense of additional
technicalities. 

The addition of the diagonal line in the reference corner,
tensor, and mixed patches is done 
to be able to apply the regularity theory of \cite{melenk02}.
It is likely not necessary in actual computations. 
We also mention that with additional constraints on the macro triangulation $\calT^\M$ 
the diagonal line could be dispensed with in certain situations 
as is illustrated in Section~\ref{sec:generate-bdy-layer-mesh}. 
\eremk
\end{remark}

\subsection{Geometric boundary layer mesh}
\label{S:GeoBL}
The following definition of the geometric boundary layer mesh $\Tg$ formalizes 
the patchwise construction of meshes on $\Omega$ based on transplanting 
meshes of the reference configurations to $\Omega$ via the patch maps $F_{K^\M}$. 
%
\begin{definition}[geometric boundary layer mesh $\Tg$ in $\Omega$]
\label{def:bdylayer-mesh}
Let ${\mathcal T}^{\M}$ 
be a fixed macro-triangulation consisting of quadrilaterals 
with analytic element maps that satisfy \cite[Def.~{2.4.1}]{melenk02}. 

Given $\sigma \in (0,1)$, $L$, $n \in {\mathbb N}_0$ with $n \ge L$, 
a regular mesh $\Tg$ in $\Omega$ 
is called a \emph{geometric boundary layer mesh} 
if the following conditions are satisfied:
\begin{enumerate}
\item $\Tg$ is obtained by refining each element $K^{\M} \in {\mathcal T}^{\M}$
according to one of the refinement patterns given in Definition~\ref{def:admissible-patterns} using 
the given parameters $\sigma$, $L$, and $n$.
\item The resulting mesh $\Tg$ is a regular triangulation of $\Omega$, i.e., 
it does not have hanging nodes. Since the element maps for the refinement patterns
are assumed to be affine, this requirement ensures that the resulting triangulation satisfies
\cite[Def.~{2.4.1}]{melenk02}.
\end{enumerate}
For each macro-patch $K^\M \in {\mathcal T}^{\M}$, 
exactly one of the following cases is possible: 
\begin{enumerate}
\setcounter{enumi}{2}
\item 
\label{item:def-geo-3}
$\overline{K^\M} \cap \partial\Omega = \emptyset$. 
Then the trivial patch is selected as the reference patch. 
\item 
\label{item:def-geo-4} 
$\overline{K^\M} \cap \partial \Omega$ is a single point. Then two cases
can occur: 
\begin{enumerate}[(a)]
\item 
$\overline{K^\M} \cap \partial \Omega =  \{\bA_j\}$ for a vertex $\bA_j$ of 
$\Omega$. Then 
the corresponding reference patch is the corner patch
$\widetilde{\calT}^{\Co,n}_{geo,\sigma}$ with $n$ layers of refinement towards
$\bO$. Additionally, $F_{K^\M}(\bO) = \bA_j$. 
\item 
$\overline{K^\M} \cap \partial\Omega = \{\bP\}$, where $\bP$ is not a 
vertex of $\Omega$. 
Then the refinement pattern is the
corner patch $\widetilde{\calT}^{\Co,L}_{geo,\sigma}$ with 
$L$ layers of geometric mesh refinement towards $\bO$. 
Additionally, 
it is assumed that $F_{K^\M}(\bO)  = \bP \in \partial\Omega$. 
\end{enumerate}
%
\item 
\label{item:def-geo-5}
$\overline{K^\M} \cap \partial \Omega = \overline{e}$ for an edge $e$ of 
$K^\M$ and neither  endpoint of $e$ is a vertex of $\Omega$. Then
the refinement pattern is the boundary layer patch $\widetilde{\calT}^{\BL,L}_{geo,\sigma}$ and additionally 
$F_{K^\M}(\{\yt = 0\}) \subset \partial\Omega$. 
\item 
\label{item:def-geo-6}
$\overline{K^\M} \cap \partial \Omega = \overline{e}$ for an edge $e$ of 
$K^\M$ and exactly one endpoint of $e$ is a vertex $\bA_j$ of $\Omega$. Then
the refinement pattern is the mixed layer patch
$\widetilde{\calT}^{\Mi,L,n}_{geo,\sigma}$ and additionally 
$F_{K^\M}(\{\yt = 0\}) \subset \partial\Omega$ as well as 
$F_{K^\M}(\bO) = \bA_j$. 
\item 
\label{item:def-geo-7}
Exactly two edges of a macro-element $K^\M$ are situated on $\partial\Omega$. 
Then the refinement pattern 
is the tensor patch $\widetilde{\calT}^{\Te,L,n}_{geo,\sigma}$. 
Additionally, 
it is assumed that 
$F_{K^\M}(\{\yt = 0\}) \subset \partial\Omega$, 
$F_{K^\M}(\{\xt = 0\}) \subset \partial\Omega$, and 
$F_{K^\M}(\bO) = \bA_j$ for a vertex $\bA_j$ of $\Omega$. 
\end{enumerate}
Finally, 
the following technical condition ensures the 
existence of certain meshlines: 
\begin{enumerate}
\setcounter{enumi}{7}
\item 
\label{item:def-geo-mesh-9}
For each vertex $\bA_j$ of $\Omega$, introduce a set of lines 
$$\ell = \bigcup_{K^\M \colon \bA_j \in \overline{K^\M}} 
\{\, F_{K^\M}(\{\yt = 0\}), F_{K^\M}(\{\xt = 0\}), F_{K^\M}(\{\xt = \yt\})\, \}.
$$
Let $\Gamma_j$, $\Gamma_{j+1}$ be the two boundary arcs of $\Omega$ that meet at $\bA_j$. 
Then there exists a line $e \in \ell$ such that 
the interior angles $\angle(e,\Gamma_j)$ and $\angle(e,\Gamma_{j+1})$ 
are both less than $\pi$. 
\end{enumerate}
\end{definition}
\begin{remark}
\label{rem:bl-meshes}
The last condition, requirement \ref{item:def-geo-mesh-9}.\ 
in Definition~\ref{def:bdylayer-mesh}, is merely a 
technical condition that results from our applying the regularity
theory for singular perturbations of \cite{melenk02}. 
Very likely, it could be dropped. 

The condition that $F_{K^\M}(\bO) \in \partial\Omega$ or that 
$F_{K^\M}(\{\yt = 0\}) \subset \partial\Omega$ are not conditions on the 
patch geometry but on the maps $F_{K^\M}$. They are not essential but introduced
for notational simplicity. They could be enforced by suitably concatenating
the maps $F_{\K^\M}$ with an orthogonal transformation.
\eremk
\end{remark}
\begin{remark}
The meshes $\Tg$ are refined towards both vertices and edges of $\Omega$. 
The parameter $L\in \bbN_0$ measures the number of layers of geometric refinement towards $\partial\Omega$
whereas the parameter $n\in \bbN$ characterizes the number of layers 
of geometric refinement towards the vertices. 
For $L = 0$ (or, more generally, $L$ fixed), 
the meshes $\calT^{0,n}_{geo,\sigma}$, $n=1,2,\ldots$, 
realize the ``geometric meshes'' introduced in \cite{babuska-guo86a, babuska-guo86b} 
(see also \cite[Sec.~{4.4.1}]{phpSchwab1998})
for the $hp$-FEM applied to elliptic boundary value problems with piecewise analytic data. 
\eremk
\end{remark}
\begin{example}
Fig.~\ref{fig:patch-examples} (left and middle) shows an example 
of an $L$-shaped domain with macro triangulation and 
suitable refinement patterns. 
\eremk
\end{example}
\subsection{Geometric boundary layer meshes in curvilinear polygons}
\label{sec:generate-bdy-layer-mesh}
\begin{figure}
\begin{overpic}[width=0.5\textwidth]{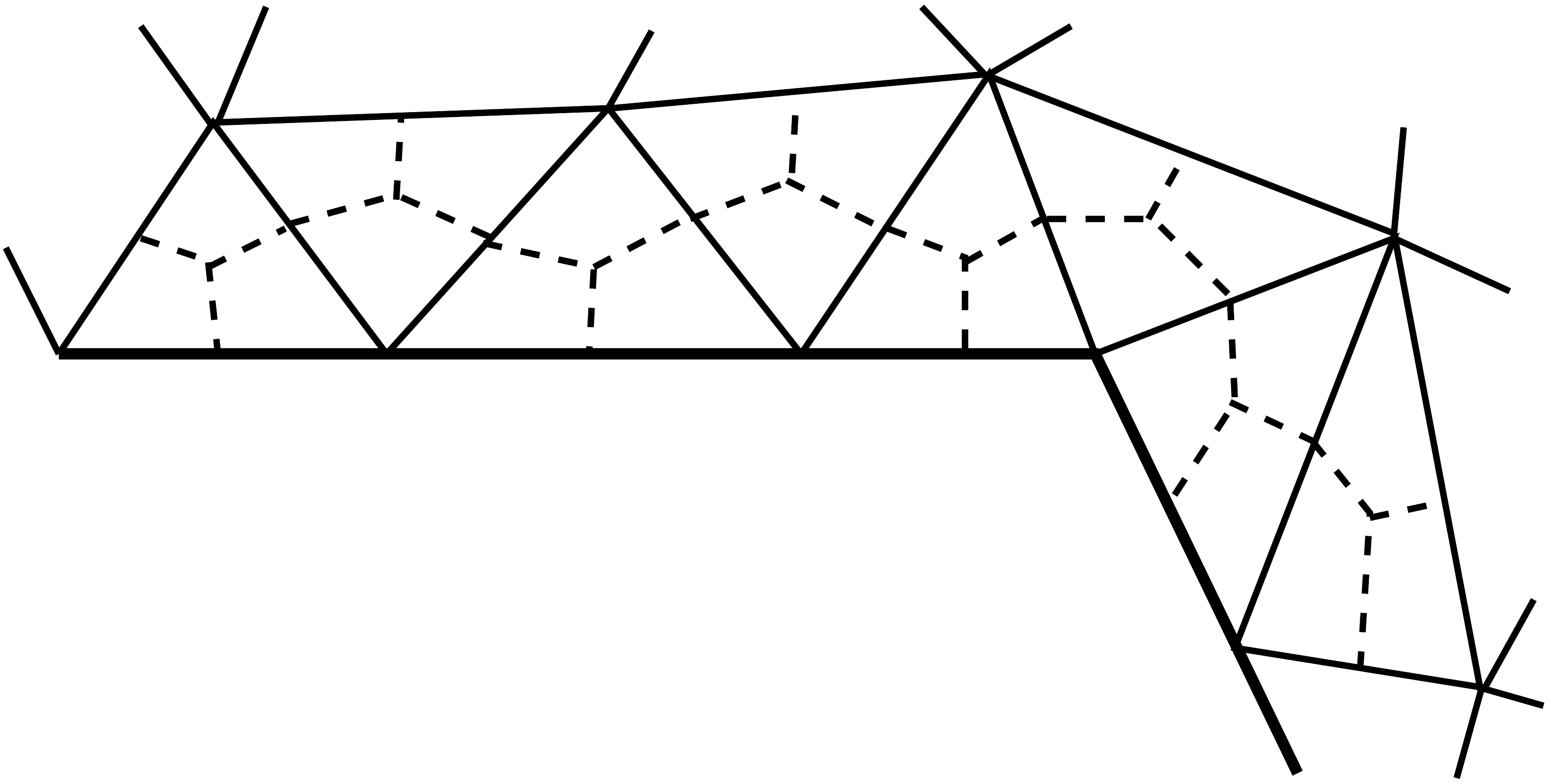}
\put(9,30){\small $B$}
\put(18,30){\small $B$}
\put(23,32){\small $C^L$}
\put(30,30){\small $B$}
\put(45,30){\small $B$}
\put(50,32){\small $C^L$}
\put(57,30){\small $B$}
\put(65,30){\small $M$}
\put(72,30){\small $C^n$}
\put(75,25){\small $M$}
\put(79,15){\small $B$}
\put(82,10){\small $C^L$}
\end{overpic}
\caption{\label{fig:bdylayermesh_from_triangulation} Generating a boundary layer mesh from a regular triangulation $\calT^0$: solid lines are the 
triangulation $\calT^0$, 
dashed lines connect edge midpoints with element barycenters to create a mesh consisting of quadrilaterals. 
$B$ stands for boundary layer, $M$ for mixed, 
$C^L$ for corner layer patches $\widetilde{\calT}^{\Co,L}_{geo,\sigma}$, 
$C^n$ for corner layer patches $\widetilde{\calT}^{\Co,n}_{geo,\sigma}$, 
empty quadrilaterals are trivial patches.}
\end{figure}
Geometric boundary layer meshes can be constructed in various ways. 
A first approach, which is in line with the illustration
in Fig.~\ref{fig:patch-examples}, is to create one layer of quadrilateral
elements that partition a tubular neighborhood $T_{\partial\Omega} $ 
of $\partial\Omega$. Each quadrilateral $K$ should fall into one of 
the following
3 categories: a) $\overline{K} \cap \partial\Omega$ is an edge of $K$; 
b) $\overline{K} \cap \partial\Omega$ consists of two contiguous edges 
and the shared vertex is a vertex of $\Omega$;  
c) $\overline{K} \cap \partial\Omega$ is a vertex of $\Omega$. 
In the second step, refinement patterns from Definition~\ref{def:bdylayer-mesh} are applied to each quadrilateral.
In  the final step,
$\Omega':= \Omega\setminus T_{\partial\Omega}$ is triangulated under the 
constraint that the boundary nodes of the triangulation of $\Omega'$ 
on $\partial\Omega'$ coincide with the nodes 
of the triangulation of $T_{\partial\Omega}$ that also lie on 
$\partial\Omega'$. This triangulation of $\Omega'$ could be chosen 
to consist of triangles (and/or quadrilaterals). All elements of that 
triangulation will be denoted ``trivial patches''; we mention without proof 
that the approximation result holds also if we include ``trivial'' triangles 
in the list of refinement patterns. 

Geometric boundary layer meshes can also be constructed for general (curvilinear)
polygons $\Omega$ starting from any regular initial triangulation 
${\mathcal T}^0$ of $\Omega$. 
This triangulation ${\mathcal T}^0$ is assumed 
to consist of (curvilinear) triangles with analytic
element maps and satisfying the ``usual'' conditions for triangulations as 
spelled out in \cite[Def.~{2.4.1}]{melenk02}.  
Then, the geometric boundary layer mesh is generated in 3 steps
(cf.\ Fig.~\ref{fig:bdylayermesh_from_triangulation}):
\begin{enumerate} 
\item 
(Ensure condition \ref{item:def-geo-mesh-9} of Def.~\ref{def:bdylayer-mesh}) 
For each vertex $\bA_j$ of $\Omega$ verify if an edge $e$ of ${\mathcal T}^0$  
splits the interior angle at $\bA_j$ into two angles each less than $\pi$. 
If not, then suitably split an appropriate triangle abuting on $\bA_j$ 
into two triangles
(so that the newly introduced edge will satisfy this condition) 
and remove the newly introduced hanging node by a mesh closure. 
The resulting triangulation has again analytic element maps and satisfies 
\cite[Def.~{2.4.1}]{melenk02}; it is again denoted ${\mathcal T}^0$. 
\item 
(Create a macro triangulation ${\mathcal T}^\M$ 
consisting of quadrilaterals only.) 
Split each triangle $K \in {\mathcal T}^0$ into 3 quadrilaterals as follows: split the 
reference triangle $\widehat T$ into 3 quadrilaterals 
$\widehat K_i$, $i=1,2,3$, characterized by the vertices of $\widehat T$, 
its barycenter, and by the $3$ midpoints of the edges of $\widehat T$. 
The element maps of the 3 quadrilaterals $F_K(\widehat K_i)$, $i=1,2,3$, 
are obtained by concatenating the bilinear bijections 
$F_{\widehat K_i}:\widetilde S \rightarrow \widehat K_i$ with $F_{K}$. 
The triangulation ${\mathcal T}^\M$ of $\Omega$ obtained in this way realizes 
a decomposition of $\Omega$ into (curvilinear) quadrilaterals, 
and the element maps satisfy \cite[Def.~{2.4.1}]{melenk02}. 
\item (Generate the geometric boundary layer mesh.)
The refinement pattern for each $K \in {\mathcal T}^\M$ is 
determined since $K$ falls into exactly one of the categories 
\ref{item:def-geo-3}---\ref{item:def-geo-7} of 
Definition~\ref{def:bdylayer-mesh} 
as can be seen by the following observations:
a) At most 2 edges of $K$ are on $\partial\Omega$ (since the two edges that 
meet in the barycenter of the parent triangle cannot be on $\partial\Omega$). 
b) If two edges of $K$ are situated on $\partial\Omega$, 
then they have to be subsets of the two edges 
of the parent triangle with common vertex $\bbV$; since ${\mathcal T}^0$ is 
a regular triangulation, the common vertex $\bbV$ has to be a vertex of $\Omega$. 
Additionally, if necessary, the assumptions on where the 
reference element vertex $\bO$ and/or 
the edges $\{\yt = 0\}$, $\{\xt=  0\}$ are mapped can be ensured by suitably 
adjusting the element map with the aid of an orthogonal transformation 
of $\widetilde S$. 
Finally, condition \ref{item:def-geo-mesh-9} of Def.~\ref{def:bdylayer-mesh} 
is satisfied by step 1.
\end{enumerate}
It remains to see that after selecting the refinement patterns the 
resulting triangulation satisfies \cite[Def.~{2.4.1}]{melenk02}. This
follows from the fact that the parameters $\sigma$, $L$, $n$ are the same 
for all macro elements and the structure of the refinement patterns: 
If an edge $e$ of the macro triangulation inherits a further refinement 
from a refinement pattern, then the edge either lies on 
$\partial\Omega$ (which is immaterial for the question of 
satisfying \cite[Def.~{2.4.1}]{melenk02}) or it is in $\Omega$ 
and exactly one of its endpoints $\bbV$ lies on $\partial\Omega$. 
This edge $e$ is shared by two macro elements. If $\bbV$ is a vertex 
of $\Omega$, then the refinement patterns are such that the induced
1D-mesh on $e$ is the same geometric mesh with $n$ layers for 
both macro elements. If $\bbV\in\partial\Omega$ is not a vertex of $\Omega$, 
then the induced 1D-mesh on $e$ is the same geometric mesh with $L$ layers 
for both macro elements. Hence, the resulting mesh satisfies 
\cite[Def.~{2.4.1}]{melenk02}. 

\subsection{Properties of the mesh patches}
\label{sec:PrpMshPtch}
We note that parts of the mixed patch, the tensor patch, 
and the corner patch are identical or at least structurally similar. 
For the analysis of the approximation properties of 
$hp$-FEM on geometric boundary layer meshes
it is therefore convenient to single out these meshes:  
%
\begin{definition}[half-patches, cf.\ \protect{Fig.~\ref{fig:half-figures}}]
\label{def:half-patches}
The \emph{ mixed half-patch}
$\widetilde{\calT}^{\Mi,\text{\rm half},L,n}_{geo,\sigma}$ 
and the \emph{ corner half-patch }
$\widetilde{\calT}^{\Co,\text{\rm half},n}_{geo,\sigma}$ 
on $\widetilde T = \{(\xt,\yt)\,|\, 0 < \xt < 1, 0 < \yt  <\xt\}$
are obtained by restricting $\widetilde{\calT}^{\Mi,L,n}_{geo,\sigma}$
and $\widetilde{\calT}^{\Co,n}_{geo,\sigma}$ to $\widetilde T$. The 
\emph{ flipped corner half-patch }
$\widetilde{\calT}^{\Co,\text{\rm half},\text{\rm flip},n}_{geo,\sigma}$ on 
$\Tf:=\{(\xt,\yt)\,|\, 0 < \xt < 1, \xt < \yt < 1\}$ is obtained
by reflecting $\widetilde{\calT}^{\Co,\text{\rm half},n}_{geo,\sigma}$ at the 
diagonal $\{(\xt,\xt)\,|\, \xt \in (0,1)\}$ of $\widetilde S$. 
\end{definition}
We will approximate functions on boundary layer meshes $\Tg$ with the aid
of an elementwise defined operator $\Pi_q$. 
To estimate the total
error in $L^2$-based norms, the elemental error contributions are 
summed up on each mesh patch separately. The following Lemma~\ref{lemma:properties-mesh-patches} 
provides tools to conveniently do that. In order to formulate 
Lemma~\ref{lemma:properties-mesh-patches}, we introduce some additional
notation, which represents the pull-back of the parts of the boundary
of the reference patch that is mapped to $\partial\Omega$ and is marked
by bold lines or dots in Figs.~\ref{fig:patches} and \ref{fig:half-figures}: 
\begin{subequations}
\label{eq:bdy-parts}
\begin{align}
\bdy^C:= \bdy^{\Co,\text{\rm half}} :=  \bdy^{\Co,\text{\rm half},\text{\rm flip}}& := \{\bO\}, \\
\bdy^{\BL}:= \bdy^M := \bdy^{\Mi,\text{\rm half}}& := \{\yt=0\}, \\
\bdy^{\Te}& := \{\yt=0\} \cup \{\xt=0\} \cup \{\bO\}. 
\end{align}
\end{subequations}
\begin{lemma}[properties of mesh patches]
\label{lemma:properties-mesh-patches}
The reference patches (cf.\ Def.~\ref{def:admissible-patterns}) 
and half patches (cf.\ Def.~\ref{def:half-patches}) have the following properties: 
\begin{enumerate}[(i)]
\item 
\label{item:lemma:properties-mesh-patches-i}
The triangular elements $\Kt$ of the reference patches are 
shape regular with shape regularity constant depending solely on $\sigma$. 
For the rectangular elements $\Kt$ of the reference patches, the 
element maps $A_{\Kt}: \widehat K \rightarrow \Kt$ are affine
with 
$$
A_{\Kt}^\prime =  \left(\begin{array}{cc} h_{\Kt,\xt} & \\ 
                                 & h_{\Kt,\yt}\end{array}\right), 
$$
where $h_{\Kt,\xt}$, $h_{\Kt,\yt} \leq 1$ 
are the side lengths (in $\yt$ and $\xt$-direction) of $\Kt$. 
We denote 
\begin{equation}
h_{\Kt,min}:= \min\{h_{\Kt,\xt},h_{\Kt,\yt}\}, 
\qquad 
h_{\Kt,max}:= \max\{h_{\Kt,\xt},h_{\Kt,\yt}\}. 
\end{equation}
\item 
\label{item:lemma:properties-mesh-patches-ii}
There is $c_\di > 0$ depending only on $\sigma$ such that 
for all triangular elements $\Kt$ 
of a reference patch $\widetilde{\calT}$ or a half-patch  
$\widetilde{\calT}$ the following dichotomy holds: 
$$
\text{either $\overline{\Kt} \cap \bdy \ne \emptyset$}
\quad \text{ or } \quad 
\operatorname{dist}(\Kt,\bdy) \ge c_\di h_{\Kt}, 
$$
where $h_{\Kt} = \operatorname{diam}(\Kt)$ 
and 
$\bdy\in 
\{
\bdy^{\Co}, 
\bdy^{\Co,\text{\rm half}},
\bdy^{\Co,\text{\rm half},\text{\rm flip}}, 
\bdy^{\Te}, 
\bdy^{\Mi}, 
\bdy^{\Mi,\text{\rm half}}, 
\bdy^{\BL}\}$ 
for $\widetilde{\calT} \in \{
\widetilde{\calT}^{\Co,n}_{geo,\sigma}, 
\widetilde{\calT}^{\Co,\text{\rm half}, n}_{geo,\sigma}, 
\widetilde{\calT}^{\Co,\text{\rm half}, \text{\rm flip}, n}_{geo,\sigma}, 
\widetilde{\calT}^{\Te,n}_{geo,\sigma}, 
\widetilde{\calT}^{\Mi,L,n}_{geo,\sigma}, 
\widetilde{\calT}^{\Mi,\text{\rm half},L,n}_{geo,\sigma}, 
\widetilde{\calT}^{\BL,L}_{geo,\sigma} 
\},$
respectively. 
\item 
\label{item:lemma:properties-mesh-patches-iii}
There is $c_\di >0$ depending only on $\sigma$ such that 
for all rectangular elements $\Kt$ 
of a reference patch $\widetilde{\calT}$ 
or half-patch $\widetilde{\calT}$,
the following dichotomy holds: 
$$
\text{Either $\overline{\Kt} \cap \bdy=\emptyset$}
\quad 
\text{ or } \quad \operatorname{dist}(\Kt,\bdy) \ge c_\di h_{\Kt,min}, 
$$
where $\bdy\in \{
\bdy^{\Te}, 
\bdy^{\Mi}, 
\bdy^{\Mi,\text{\rm half}}, 
\bdy^{\BL}\}$ 
for $\widetilde{\calT} \in \{
\widetilde{\calT}^{\Te,n}_{geo,\sigma}, 
\widetilde{\calT}^{\Mi,L,n}_{geo,\sigma}, 
\widetilde{\calT}^{\Mi,\text{\rm half},L,n}_{geo,\sigma}, 
\widetilde{\calT}^{\BL,L}_{geo,\sigma}
\},$
respectively. 
\item 
\label{item:lemma:properties-mesh-patches-iv}
There is $c_\di > 0$ depending only on $\sigma$ such that
for all rectangular elements $\Kt$ of a mixed patch, a mixed half-patch, 
or a tensor patch there holds 
$\operatorname{dist}(\Kt,\bO) \ge c_\di h_{\Kt,max}$. 
\item
\label{item:lemma:properties-mesh-patches-iva}
There is $C > 0$ depending only on $\sigma$ such that for
all elements $\Kt$ of a reference patch or half-patch there holds 
$\operatorname{dist}(\Kt,\bO) \leq C \operatorname{diam} \Kt$. 
\item 
\label{item:lemma:properties-mesh-patches-v} 
Let $\delta > 0$ and consider a reference patch or half-patch. 
Let ${\Tref}^\triangle$ be the collection of triangles 
of that reference patch or half-patch 
that do not abut on the vertex $\bO$. 
Then, there exists a constant $C>0$ 
depending solely on $\delta$ and $\sigma$ such that
$$
\sum_{\Kt \in {\Tref}^\triangle} h_{\Kt}^{\delta} \leq C. 
$$
\item 
\label{item:lemma:properties-mesh-patches-vi}
Let $\delta > 0$ and consider a reference mixed patch, 
tensor patch, mixed half-patch or corner half-patch. 
Let ${\Tref}^\square$ be the collection of rectangles of that reference patch. 
Then there exists a constant $C>0$ 
depending solely on $\delta$ and the parameter $\sigma$
such that
$$
\sum_{\Kt \in {\Tref}^\square} \frac{h_{\Kt,min}}{h_{\Kt,max}} h_{\Kt,max}^{\delta} \leq C. 
$$
\item 
\label{item:lemma:properties-mesh-patches-vii}
Let $\delta \in (0,1]$, $\alpha > 0$, 
and consider a reference patch or half-patch. 
Let ${\Tref}^\triangle$ be the collection of triangles 
of that reference patch or half-patch that do not abut on the vertex 
$\bO$. 
Then, there holds, with a $C>0$ 
depending solely on $\delta$, $\alpha$, and $\sigma$,
$$
\forall \varepsilon \in (0,1]\colon \quad
\sum_{\Kt \in {\Tref}^\triangle} 
(h_{\Kt}/\varepsilon)^{\delta} e^{-\alpha h_{\Kt}/\varepsilon} \leq C. 
$$
\item 
\label{item:lemma:properties-mesh-patches-viii}
Let $\delta \in (0,1]$, $\alpha > 0$, 
and consider a reference mixed patch, mixed half-patch, 
or a reference tensor patch. 
Let ${\Tref}^\square$ be the collection of rectangles of that reference patch. 
Then there exists a constant $C>0$ depending solely on $\delta$, $\alpha$, and $\sigma$ such that
$$
\forall \varepsilon \in (0,1]\colon \quad 
\sum_{\Kt \in {\Tref}^\square} \frac{h_{\Kt,min}}{h_{\Kt,max}} 
   (h_{\Kt,max}/\varepsilon)^{\delta} e^{-\alpha h_{\Kt,max}/\varepsilon}
\leq C. 
$$
\end{enumerate}
\end{lemma}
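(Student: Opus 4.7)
The plan is to verify each item by a direct case analysis using the explicit geometric construction of the patches in Definition~\ref{def:admissible-patterns} together with the assumption $\sigma \in (0,1)$ fixed. The key observation underlying everything is that each reference patch is built from a finite number of \emph{shells} of geometrically scaled elements, indexed by a refinement level $i$, where the elements in shell $i$ have diameters comparable to $\sigma^i$ (or, for anisotropic rectangles, minimal/maximal side lengths comparable to $\sigma^i(1-\sigma)$ and $\sigma^i$ in the two coordinate directions). All constants will come from either $\sigma$ or counting the finite number of elements per shell.

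For parts \ref{item:lemma:properties-mesh-patches-i}--\ref{item:lemma:properties-mesh-patches-iva}, the proof is essentially a catalog lookup. Part~\ref{item:lemma:properties-mesh-patches-i}: the triangular elements in the corner/mixed/tensor refinement patterns are obtained from a fixed finite set of reference triangles by the scaling $\bx \mapsto \sigma^i \bx$, so their shape regularity constant depends only on $\sigma$; the rectangular elements are axis-aligned by construction, so their element maps from $\widehat S$ are diagonal affine. Parts~\ref{item:lemma:properties-mesh-patches-ii} and \ref{item:lemma:properties-mesh-patches-iii} are the standard \emph{geometric shell dichotomy}: for a triangle $\Kt$ in shell $i$ not abutting $\bO$, both $h_{\Kt}$ and $\dist(\Kt,\bO)$ are comparable to $\sigma^i$, with ratio bounded in terms of $\sigma$; similarly, for a rectangle of the boundary layer patch at shell level $i$ whose closure does not touch $\{\yt=0\}$, one has $h_{\Kt,\min} \sim \sigma^i(1-\sigma)$ and $\dist(\Kt,\{\yt=0\}) \geq \sigma^{i+1}$. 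Parts~\ref{item:lemma:properties-mesh-patches-iv} and \ref{item:lemma:properties-mesh-patches-iva} follow analogously from the construction of the mixed/tensor refinements in $\widetilde S_2$, where a rectangle at level $i$ has $h_{\Kt,\max}\sim \sigma^i$ and $\dist(\Kt,\bO) \sim \sigma^i$.

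Parts~\ref{item:lemma:properties-mesh-patches-v} and \ref{item:lemma:properties-mesh-patches-vi} reduce to convergent geometric series. Group triangles (resp.\ rectangles) of $\Tref^\triangle$ (resp.\ $\Tref^\square$) by their shell level $i \in \{0,1,\dots,n\}$ (or up to $L$ for boundary layer/mixed patches). The number of elements per shell is bounded by a constant $N(\sigma)$ depending only on the patch type, and $h_{\Kt} \leq C \sigma^i$ in shell $i$. For rectangles one also uses $h_{\Kt,\min}/h_{\Kt,\max} \leq 1$ universally, together with the improved bound $h_{\Kt,\min}/h_{\Kt,\max} \leq C \sigma^{j-i}$ in the anisotropic regions. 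The resulting sums are dominated by $\sum_{i=0}^\infty \sigma^{i\delta}$, which is bounded by $1/(1-\sigma^\delta)$. The main small technicality is to handle the three subregions $\widetilde S_1, \widetilde S_2, \widetilde S_3$ of the mixed patch separately, with $\widetilde S_1$ contributing a scaled version of the corner-patch sum.

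Parts~\ref{item:lemma:properties-mesh-patches-vii} and \ref{item:lemma:properties-mesh-patches-viii} are the most delicate, and this is where I expect the main obstacle: the bound must be \emph{uniform} in $\varepsilon \in (0,1]$. The standard workaround is a scale-split argument: let $t_{\Kt} := h_{\Kt}/\varepsilon$ (resp.\ $h_{\Kt,\max}/\varepsilon$). Split the elements into \emph{small scales} ($t_{\Kt} \leq 1$) and \emph{large scales} ($t_{\Kt} > 1$). On the small scales, $e^{-\alpha t_{\Kt}} \leq 1$ and $t_{\Kt}^\delta \leq (h_{\Kt}/\varepsilon)^\delta$, so after absorbing $\varepsilon^{-\delta}$ and using that $h_{\Kt} \leq \varepsilon$ forces $h_{\Kt}^\delta$ to appear, this reduces to a geometric sum as in~\ref{item:lemma:properties-mesh-patches-v}--\ref{item:lemma:properties-mesh-patches-vi} (with exponent $\delta$). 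On the large scales, $t_{\Kt}^\delta e^{-\alpha t_{\Kt}} \leq C_{\delta,\alpha} e^{-(\alpha/2) t_{\Kt}}$; since $t_{\Kt} \geq 1$ here, summing a finite geometric progression in $t_{\Kt}$ (the shell levels give $t_{\Kt} \sim \sigma^i/\varepsilon$) produces a bound involving $\sum_i \exp(-c\sigma^i/\varepsilon)$, which is dominated by a convergent series in $i$ uniformly in $\varepsilon$ because $\sigma^i/\varepsilon \geq 1$ in the large-scale regime. Combining the two regimes yields the desired $\varepsilon$-independent bound. The same argument, with the extra factor $h_{\Kt,\min}/h_{\Kt,\max}$, handles~\ref{item:lemma:properties-mesh-patches-viii}.
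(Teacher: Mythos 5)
Your proposal follows the same overall strategy as the paper: items (i)--(iva) by direct inspection of the explicit node structure, items (v)--(vi) by reduction to convergent geometric sums indexed by refinement level (with the factor $h_{\Kt,\min}/h_{\Kt,\max}$ essential to control the inner index of rectangles in mixed/tensor patches), and items (vii)--(viii) by establishing the $\varepsilon$-uniform boundedness of the weighted sum. The one place your argument genuinely diverges from the paper is the uniform bound in (vii)--(viii): the paper invokes a sum-to-integral comparison, observing directly that $\sum_{i\ge0}(\sigma^i/\varepsilon)^\delta e^{-\alpha\sigma^i/\varepsilon}\le C$ because the associated integral is a Gamma integral, whereas you split into the regimes $h_{\Kt}\le\varepsilon$ and $h_{\Kt}>\varepsilon$ and handle each by geometric decay. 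Both yield the same uniform bound, and your two-regime argument is somewhat more explicit about why the constant is independent of $\varepsilon$, at the cost of extra bookkeeping. One caution on exposition: the phrase ``absorbing $\varepsilon^{-\delta}$'' only works because the partial sum $\sum_{h_{\Kt}\le\varepsilon}h_{\Kt}^{\delta}\lesssim\varepsilon^{\delta}$ (the largest term dominates geometrically); stated without that observation the $\varepsilon^{-\delta}$ looks uncontrolled. Similarly, ``the number of elements per shell is bounded'' is true for triangles but not literally for the rectangles of the mixed/tensor patches (there the paper's double-index enumeration $K_{i,j}$, $j=1,\dots,i$, is the right picture); you do compensate via the anisotropy factor $h_{\Kt,\min}/h_{\Kt,\max}\lesssim\sigma^{j-i}$, which is exactly the ingredient the paper uses in its display (3.3).
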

\begin{proof}
Items~(\ref{item:lemma:properties-mesh-patches-i})--(\ref{item:lemma:properties-mesh-patches-iva}) follow by construction. 

Since items (\ref{item:lemma:properties-mesh-patches-v}), 
(\ref{item:lemma:properties-mesh-patches-vi}) are shown by similar arguments, 
we only prove the case of (\ref{item:lemma:properties-mesh-patches-vi}) for the
specific case of the mixed patch as shown in Fig.~\ref{fig:patches}, bottom right panel.
Inspection of that panel shows that for each
$\Kt \in {\Tref}^\square$ we have 
$h_{\Kt,min} = h_{\Kt,\yt}$ 
and $h_{\Kt,max} = h_{\Kt,\xt}$. 
Additionally, the elements can be enumerated 
as $K_{i,j}$, $i=1,\ldots,L$, $j=1,\ldots,i$ 
with 
$h_{K_{i,j},x} \sim \sigma^{L-i}$,  $h_{K_{i,j},y} \sim \sigma^{L-j}$.  
Hence, 
\begin{align}
\label{eq:lemma:properties-mesh-patches-90}
\sum_{\Kt \in {\Tref}^\square} 
\frac{h_{\Kt,min}}{h_{\Kt,max}} h_{\Kt,max}^\delta
& \lesssim 
\sum_{i=1}^L \sum_{j=1}^i \frac{\sigma^{L-j}}{\sigma^{L-i}} \sigma^{(L-i)\delta}
\lesssim 
\sum_{i=1}^L \sigma^{\delta(L-i)}  \lesssim 1. 
\end{align}
The proof of items 
(\ref{item:lemma:properties-mesh-patches-vii}), 
(\ref{item:lemma:properties-mesh-patches-viii}) is also done in similar ways. 
Therefore, 
we will only show 
(\ref{item:lemma:properties-mesh-patches-viii}). The key observation is 
that by comparing sums with integrals, 
there is a constant $C > 0$ depending solely on $\delta$, $\alpha$, and $\sigma$ 
such that  
\begin{equation}
\label{eq:lemma:properties-mesh-patches-100}
\forall \eps\in (0,1]\colon \quad
\sum_{i=0}^\infty (\sigma^i/\varepsilon)^\delta 
e^{-\alpha \sigma^i/\varepsilon} \leq C. 
\end{equation}
The proof of (\ref{item:lemma:properties-mesh-patches-viii}) now follows by a reasoning
similar to that in (\ref{eq:lemma:properties-mesh-patches-90}). 
\end{proof}

\section{Approximation on the reference elements and on the reference configurations}
\label{S:AppRefElt}
In Sec.~\ref{sec:approx-reference-element} we construct polynomial approximation operators 
on the reference square and triangle that coincide with the Gauss-Lobatto interpolant
on the edges, which affords convenient $H^1$-conforming approximations. 
Sec.~\ref{sec:approx-reference-patches} studies the approximation properties 
of spaces of piecewise polynomials on the reference patches. It is shown 
that functions of boundary layer or corner layer type can be approximated at exponential
rates, robustly in the parameter $\varepsilon$ that characterizes the strength of 
the layer. 
\subsection{Polynomial approximation operators on the reference element}
\label{sec:approx-reference-element}
We introduce polynomial approximation operators on the reference triangle 
$\widehat T$ in Lemma~\ref{lemma:hat_Pi_infty} and the reference square 
$\widehat S$ in Lemma~\ref{lemma:hat_Pi_1_infty}. 
Before actually doing so, we highlight a technical detail: 
the triangular elements (on the reference patches) 
are shape-regular so that isotropic scaling arguments can be brought to bear; 
only the rectangles (of the reference patches) may be anisotropic, for which 
tensor product polynomial approximation operators (specifically, 
the Gauss-Lobatto interpolation operator) are used for their favorable 
anisotropic scaling properties. 
 
\begin{lemma}[element-by-element approximation on triangles]
\label{lemma:hat_Pi_infty}
Let $\widehat T$ be the reference triangle. 
Then for every $q \in {\mathbb N}$, 
there exists a linear operator 
$\widehat \Pi^\triangle_q:C(\overline{\widehat T})\rightarrow {\bbP}_q$ 
with the following properties: 
\begin{enumerate}[(i)]
\item 
\label{item:lemma:hat_Pi_infty--1}
For each edge $e$ of $\widehat T$, 
$(\PiT u)|_e$ coincides with the 
Gauss-Lobatto interpolant $i_q (u|_e)$ of degree $q$ on edge $e$.
\item (projection property)
\label{item:lemma:hat_Pi_infty-0}
$\PiT v = v$ for all $v\in {\bbP}_q$. 
\item 
\label{item:lemma:hat_Pi_infty-i}
(stability) 
There exists a constant $C>0$ such that for every $q\in \mathbb{N}$ 
there holds
\begin{align*}
 \forall u \in W^{1,\infty}(\widehat T) \colon  
\qquad 
\|u - \PiT u\|_{W^{1,\infty}(\widehat T)} 
&\leq C q^4 \|\nabla u\|_{L^\infty(\widehat  T)}, 
\\
\forall u \in C(\overline{\widehat T}) 
\colon \qquad 
\|u - \PiT u\|_{L^{\infty}(\widehat T)} 
&\leq C q^2 \|u\|_{L^\infty(\widehat  T)} . 
\end{align*}
\item 
\label{item:lemma:hat_Pi_infty-ii} 
Let $\bA$ be one of the vertices of $\widehat  T$ and $\beta \in [0,1)$. 
Then there is $C > 0$ (depending only on $\beta$) such that, provided the right-hand side is finite,  
$$
\|u - \PiT u\|_{L^\infty(\widehat T)} 
+ 
\|u - \PiT u\|_{H^1(\widehat T)} 
\leq 
C q^4 \|\operatorname{dist}(\cdot,\bA)^\beta \nabla^2 u\|_{L^2(\widehat T)}. 
$$
\item 
\label{item:lemma:hat_Pi_infty-iii}
Let $u \in C^\infty(\widehat T)$ satisfy, for some $C_u$, $\gamma > 0$ 
and for some $h$, $\varepsilon \in (0,1]$,  
$$ 
\forall n \in {\mathbb N}_0 \colon\;\;
\|\nabla^n u\|_{L^\infty(\widehat T)} 
\leq 
C_u \gamma^n h^n \max\{n+1,\varepsilon^{-1}\}^n . 
$$
Then there are $\delta$, $C$, $\eta$, $b >0$ depending solely on $\gamma$ 
such that, under the provision that the 
\emph{scale resolution condition}
\begin{equation}
\label{eq:hat-approx-constraint-triangle}
\frac{h}{q \varepsilon} \leq \delta
\end{equation}
is satisfied, there holds  (with the constant hidden in $\lesssim$ independent of $u$, $h$, $q$ and $\varepsilon$)
\begin{align*}
\|u - \PiT u\|_{W^{1,\infty}(\widehat T)} 
& \lesssim C_u \left( \left(\frac{h}{h+\eta}\right)^{q+1} 
  + \left(\frac{h}{q \varepsilon \eta}\right)^{q+1}\right) 
\lesssim C_u e^{-b q} \min\{1,h/\varepsilon\}.
\end{align*}
\end{enumerate}
\end{lemma}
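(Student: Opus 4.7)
The operator $\widehat\Pi_q^\triangle$ is most naturally constructed in two stages. First, on each edge $e$ of $\widehat T$ set $(\widehat\Pi_q^\triangle u)|_e := i_q(u|_e)$, the univariate Gauss--Lobatto interpolant of degree $q$, which immediately delivers property (\ref{item:lemma:hat_Pi_infty--1}). Second, extend the edge data into the interior by an $H^1$-conforming polynomial lifting chosen so that the resulting operator reproduces all of $\mathbb{P}_q$, yielding the projection property (\ref{item:lemma:hat_Pi_infty-0}). This is a standard construction in the $hp$-FEM literature, and only the edge reproduction and the interior projection are really needed below.

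For the stability (\ref{item:lemma:hat_Pi_infty-i}), the $L^\infty$-bound follows from the $O(q^2)$ Lebesgue constant of one-dimensional Gauss--Lobatto interpolation (cf.\ Appendix~\ref{sec:AppB}) combined with the chosen polynomial lifting. The $W^{1,\infty}$-bound is then obtained by a Markov--Bernstein polynomial inverse estimate on $\widehat T$, which costs one additional factor $q^2$ when passing from $L^\infty$ to $W^{1,\infty}$, producing the total $q^4$. The weighted estimate (\ref{item:lemma:hat_Pi_infty-ii}) is obtained by applying (\ref{item:lemma:hat_Pi_infty-i}) to $u-p$ for a suitable affine polynomial $p$ that absorbs the value and gradient at $\bA$, and then bounding the remaining edge traces of $u-p$ by weighted $H^1(\widehat T)$-quantities using the one-dimensional weighted Gauss--Lobatto approximation results of Appendix~\ref{sec:AppB} together with a Hardy-type inequality for the vertex weight $\operatorname{dist}(\cdot,\bA)^\beta$ with $\beta\in[0,1)$.

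The main obstacle is the robust exponential bound (\ref{item:lemma:hat_Pi_infty-iii}). The plan is to use (\ref{item:lemma:hat_Pi_infty-i}) to reduce the task to constructing a single polynomial $p\in\mathbb{P}_q$ close to $u$ in $W^{1,\infty}(\widehat T)$, and to take $p$ to be the Taylor polynomial of degree $q$ of $u$ expanded at a fixed interior point of $\widehat T$. The Taylor remainder combined with the hypothesis gives
\begin{equation*}
\|u - p\|_{W^{1,\infty}(\widehat T)}
\lesssim C_u \frac{(\gamma h)^{q+1}\max\{q+2,\varepsilon^{-1}\}^{q+1}}{(q+1)!}.
\end{equation*}
Splitting $\max\{q+2,\varepsilon^{-1}\}^{q+1}$ into its two possibilities and applying Stirling's formula $(q+1)! \gtrsim ((q+1)/e)^{q+1}$ produces two contributions: one bounded by a constant multiple of $(e\gamma h)^{q+1}$, which for $\eta>0$ chosen sufficiently small (depending only on $\gamma$) is dominated by a multiple of $(h/(h+\eta))^{q+1}$ on the relevant range $h\in(0,1]$; and one bounded by a multiple of $(e\gamma h/((q+1)\varepsilon))^{q+1}$, which is of the form $(h/(q\varepsilon\eta))^{q+1}$ after adjusting $\eta$. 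Multiplying by the $q^4$ factor from (\ref{item:lemma:hat_Pi_infty-i}) is harmless and is absorbed into an $e^{-bq}$ decay as long as both base ratios are strictly below one; this is precisely where the scale resolution condition \eqref{eq:hat-approx-constraint-triangle} enters, by enforcing $\delta/\eta<1$ so that the second term decays geometrically.

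The hard part is tracking constants so that both competing terms combine into the claimed form \emph{independently} of $\varepsilon$; the final factor $\min\{1,h/\varepsilon\}$ is then extracted by a case split, noting that in the regime $h\le\varepsilon$ the bound is trivial, while in the regime $h\ge\varepsilon$ the second term dominates and carries an explicit factor $h/\varepsilon$ that can be absorbed once $q$ is large enough.
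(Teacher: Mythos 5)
Your construction of $\PiT$ (Gauss--Lobatto edge interpolation plus an interior lifting that reproduces $\bbP_q$) and your reduction of item~(\ref{item:lemma:hat_Pi_infty-iii}) to a best-approximation problem via the projection property~(\ref{item:lemma:hat_Pi_infty-0}) and the stability bounds~(\ref{item:lemma:hat_Pi_infty-i}) follow the same outline as the paper, which simply takes the operator from \cite[Thm.~3.2.20, Prop.~3.2.21]{melenk02} and then invokes \cite[Lemma~C.2]{melenk-sauter10} for the best-approximation estimate.

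The genuine gap is in how you fill in that best-approximation step. You propose taking $p$ to be the degree-$q$ Taylor polynomial of $u$ at a fixed interior point of $\widehat T$. The Taylor remainder on $\widehat T$ carries a geometric factor $(\mathrm{diam}\,\widehat T)^{q+1}$; together with the $\gamma^{q+1} h^{q+1}$ from the hypothesis this gives a bound of the form $C_u (C'\gamma h)^{q+1}$ (and likewise for the $\varepsilon^{-1}$-branch of the $\max$). When $e\gamma h$ is small this is fine, but for $h$ close to $1$ and $\gamma$ not small, $(C'\gamma h)^{q+1}$ \emph{grows} geometrically in $q$ and is not dominated by $(h/(h+\eta))^{q+1}$, which is always $<1$; your assertion that such a domination holds ``on the relevant range $h\in(0,1]$ for $\eta$ small enough depending only on $\gamma$'' is false, since one would need $e\gamma(h+\eta)\leq 1$. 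The regime $h\sim 1$ is actually used in the paper (e.g.\ Step~2.1 of Lemma~\ref{lemma:asymptotic-case} applies item~(\ref{item:lemma:hat_Pi_infty-iii}) with $h=1$ after rescaling), so this is not a degenerate case. The fix is exactly the dichotomy in Lemma~\ref{lemma:1D-poly-approx}: for $e\gamma h$ small, Taylor suffices; for $h\gtrsim 1/(e\gamma)$ one must instead use the holomorphic extension of $u$ to a neighborhood $G_\kappa$ and polynomial approximation on Bernstein ellipses, obtaining $\rho^{-q}$ for some $\rho>1$ depending on $\gamma$, then rewrite $\rho^{-q}\leq (h/(h+\eta))^q$ using $h\gtrsim 1$. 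Your argument as written does not cover this second case.

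Two smaller points. First, the $q^2$ in the $L^\infty$-stability estimate does not come from ``the $O(q^2)$ Lebesgue constant of one-dimensional Gauss--Lobatto''---that Lebesgue constant is $O(\log q)$ (Lemma~\ref{lemma:1d-GL}); the polynomial growth comes from the interior lifting. Relatedly, to obtain the $W^{1,\infty}$ bound with $\|\nabla u\|_{L^\infty}$ on the right-hand side one must first subtract a constant (or linear) polynomial reproduced by $\PiT$ before invoking stability and Markov's inequality. Second, in your extraction of $\min\{1,h/\varepsilon\}$ the cases are reversed: the factor $h/\varepsilon$ has to be produced in the regime $h\leq\varepsilon$ (where $\min\{1,h/\varepsilon\}=h/\varepsilon<1$), while the regime $h\geq\varepsilon$ is the trivial one. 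This does not affect the validity of the reduction, but as written it is inverted.
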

\begin{proof}
The operator $\PiT$ is taken as the one defined in 
\cite[Thm.~{3.2.20}]{melenk02}, where items 
(\ref{item:lemma:hat_Pi_infty--1})--(\ref{item:lemma:hat_Pi_infty-i}) are 
shown (the $W^{1,\infty}$-estimate follows with an 
additional polynomial inverse estimate). 
Item (\ref{item:lemma:hat_Pi_infty-ii}) is taken from 
\cite[Prop.~{3.2.21}]{melenk02}.  
For Item (\ref{item:lemma:hat_Pi_infty-iii}), we note that 
the projection property of (\ref{item:lemma:hat_Pi_infty-0}) and 
the stability assertions (\ref{item:lemma:hat_Pi_infty-i}) 
reduce the error estimate to a best approximation problem, which can
be taken from \cite[Lemma~{C.2}]{melenk-sauter10}. 
\end{proof}
\begin{lemma}[approximation properties of the Gauss-Lobatto interpolant]
\label{lemma:hat_Pi_1_infty}
Let $\widehat S$ be the reference square. 
For each $q \in {\mathbb N}$
the tensor-product Gauss-Lobatto interpolation operator 
$\PiGL: C(\overline{\widehat S}) \rightarrow {\bbQ}_q$ 
satisfies the following: 
\begin{enumerate}[(i)]
\item (projection property)
\label{item:lemma:hat_Pi_1_infty--1}
$\PiGL v = v$ for all $v \in {\bbQ}_q$. 
\item 
\label{item:lemma:hat_Pi_1_infty-0}
For each edge $e$, the restriction $(\PiGL u)|_e$ coincides with 
the univariate Gauss-Lobatto interpolant $i_q (u|_e)$
on $e$.
\item 
\label{item:lemma:hat_Pi_1_infty-i}
(stability) 
\begin{align*}
\forall u \in C(\overline{\widehat S}) \colon &&
\|u - \PiGL  u\|_{L^{\infty}(\widehat S)} 
& \leq C q \|u\|_{L^\infty(\widehat S)} 
,\\ 
\forall u \in C^{1}(\overline{\widehat S})\colon &&
\|\partial_\xh (u - \PiGL u)\|_{L^{\infty}(\widehat S)} 
&\leq C q^4 \|\partial_x u\|_{L^\infty(\widehat S)} 
, \\
\forall u \in C^{1}(\overline{\widehat S}) \colon &&
\|\partial_\yh (u - \PiGL u)\|_{L^{\infty}(\widehat S)}
& \leq C q^4 \|\partial_y u\|_{L^\infty(\widehat S)} . 
\end{align*}
\item 
\label{item:lemma:hat_Pi_1_infty-ii}
Let $u \in C^\infty(\widehat S)$ satisfy for some $C_u$, $\gamma > 0$, 
$\varepsilon_x$, $\varepsilon_y$, $h_x$, $h_y \in (0,1]$ 
and all
$(n,m) \in {\mathbb N}_0^2 $
\begin{equation}
\label{eq:lemma:hat_Pi_1_infty-reg}
\|\partial_\xh^m \partial_\yh^n u\|_{L^\infty(\widehat S)} \leq 
C_u \gamma^{n+m} h_x^m h_y^n \max\{n+1,\varepsilon_y^{-1}\}^n 
\max\{m+1,\varepsilon_x^{-1}\}^m . 
\end{equation}
Then there are constants $\delta$, $C$, $\eta$, $b > 0$ 
depending solely on $\gamma$ such that under the 
scale-resolution condition
\begin{equation}
\label{eq:hat-approx-constraint-rectangle}
\frac{h_x}{q \varepsilon_x } + \frac{h_y}{q \varepsilon_y } 
\leq \delta
\end{equation}
there holds 
\begin{align*}
&  \|\partial_\xh( u - \PiGL u)\|_{L^{\infty}(\widehat S)} \\
& \leq C C_u
\frac{h_x}{\varepsilon_x}
\left[
\varepsilon_x 
\left( \frac{h_x}{h_x+\eta} \right)^{q} +
\left( \frac{h_x}{\varepsilon_x q \eta } \right)^{q} +
\left( \frac{h_y}{h_y+\eta} \right)^{q+1} +
\left( \frac{h_y}{\varepsilon_y q \eta } \right)^{q+1}
\right], \\
& \|\partial_\yh( u - \PiGL u)\|_{L^{\infty}(\widehat S)} 
\\
& \leq C C_u
\frac{h_y}{\varepsilon_y}
\left[
\left( \frac{h_x}{h_x+\eta} \right)^{q+1} +
\left( \frac{h_x}{\varepsilon_x q \eta } \right)^{q+1} +
\varepsilon_y \left( \frac{h_y}{h_y+\eta} \right)^{q} +
\left( \frac{h_y}{\varepsilon_y q \eta } \right)^{q}
\right], \\
& 
\|u - \PiGL u\|_{L^{\infty}(\widehat S)}  \\
& \leq C C_u
\left[
\left( \frac{h_x}{h_x+\eta} \right)^{q+1} +
\left( \frac{h_x}{\varepsilon_x q \eta } \right)^{q+1} +
\left( \frac{h_y}{h_y+\eta} \right)^{q+1} +
\left( \frac{h_y}{\varepsilon_y q \eta } \right)^{q+1}
\right].
\end{align*}
\end{enumerate}
\end{lemma}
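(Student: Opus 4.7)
The operator is the tensor-product Gauss--Lobatto interpolant, so write $\PiGL = i_q^x \circ i_q^y$, where $i_q^x$, $i_q^y$ denote the univariate Gauss--Lobatto interpolants of degree $q$ applied in the $\xh$- and $\yh$-variables respectively. Properties (\ref{item:lemma:hat_Pi_1_infty--1}) and (\ref{item:lemma:hat_Pi_1_infty-0}) are immediate: the projection property transfers from the univariate case, and on each edge one of the two factors acts as the identity so that $\PiGL u|_e = i_q(u|_e)$. For the stability bounds in (\ref{item:lemma:hat_Pi_1_infty-i}), I would use the univariate Lebesgue constant $\|i_q\|_{L^\infty \to L^\infty} \leq C q$ applied in each variable, together with the key commutation $\partial_\xh i_q^y = i_q^y \partial_\xh$ (since $i_q^y$ acts only in $\yh$) and a univariate Markov/inverse estimate $\|(i_q v)'\|_{L^\infty} \lesssim q^2 \|v'\|_{L^\infty}$ for $v \in \bbP_q$ (obtained by writing $(i_q v)' = (i_q v - v)' + v'$ and applying Markov to the polynomial $i_q v$). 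Composing produces the $q^4$ factor.

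\textbf{Approximation bound (iv).} The central identity is the splitting
\begin{equation*}
u - \PiGL u = (I - i_q^y) u + (I - i_q^x)(i_q^y u).
\end{equation*}
For each fixed $\yh \in [0,1]$, the slice $u(\cdot,\yh)$ is a smooth function of one variable whose derivatives satisfy, by (\ref{eq:lemma:hat_Pi_1_infty-reg}) with $n=0$, the scale-specific bound $\|\partial_\xh^m u(\cdot,\yh)\|_{L^\infty} \leq C_u \gamma^m h_x^m \max\{m+1,\varepsilon_x^{-1}\}^m$. The hypothesis (\ref{eq:hat-approx-constraint-rectangle}) gives $h_x/(q\varepsilon_x) \leq \delta$ and $h_y/(q\varepsilon_y) \leq \delta$, so the one-dimensional Gauss--Lobatto approximation result from Appendix~\ref{sec:AppB} (the univariate analogue of Lemma~\ref{lemma:hat_Pi_infty}(\ref{item:lemma:hat_Pi_infty-iii})) applies and yields
\begin{equation*}
\|(I - i_q^x) w\|_{L^\infty(0,1)} \lesssim \left(\tfrac{h_x}{h_x+\eta}\right)^{q+1} + \left(\tfrac{h_x}{q\varepsilon_x\eta}\right)^{q+1}
\end{equation*}
for any $w$ with such bounds, and analogously in $\yh$. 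Slicing and taking the supremum over $\yh$ (resp.\ $\xh$) of each term in the splitting gives the stated $L^\infty$ estimate; for the second term I use in addition that $i_q^y u$ inherits bounds of the same type in the $\xh$-variable from $u$ via the univariate stability $\|i_q^y v\|_{L^\infty} \lesssim q \|v\|_{L^\infty}$ (an extra polynomial $q$ factor is harmlessly absorbed into the exponential decay).

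\textbf{Derivative bounds.} Differentiating the same splitting and using $\partial_\xh i_q^y = i_q^y \partial_\xh$ gives
\begin{equation*}
\partial_\xh(u - \PiGL u) = (I - i_q^y)(\partial_\xh u) + \partial_\xh (I - i_q^x)(i_q^y u).
\end{equation*}
The function $\partial_\xh u$ satisfies (\ref{eq:lemma:hat_Pi_1_infty-reg}) with the $m=1$ prefactor $h_x \max\{2,\varepsilon_x^{-1}\} \leq 2 h_x/\varepsilon_x$ pulled out; applying the univariate estimate in $\yh$ then delivers the $h_x/\varepsilon_x$ prefactor together with the $(h_y/(h_y+\eta))^{q+1}$ and $(h_y/(q\varepsilon_y\eta))^{q+1}$ rates. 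For the second term I apply the univariate \emph{derivative} estimate for Gauss--Lobatto interpolation (again from Appendix~\ref{sec:AppB}) to $\xh \mapsto (i_q^y u)(\xh,\yh)$; this version loses one power of the decay rate (exponent $q$ rather than $q+1$) but gains a factor $h_x/\varepsilon_x$, producing the two terms in $h_x$ with exponent $q$. The $\partial_\yh$ bound is symmetric.

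\textbf{Main obstacle.} Items (i)--(iii) are routine. The real work in (iv) is bookkeeping: the slightly asymmetric exponents ($q$ vs.\ $q+1$) for the $\xh$- and $\yh$-direction terms in the derivative bounds are dictated by where the differentiation falls in the splitting, and one must verify that the scale-resolution condition (\ref{eq:hat-approx-constraint-rectangle}) is exactly what makes the two geometric rates $(h/(h+\eta))^{q+1}$ and $(h/(q\varepsilon\eta))^{q+1}$ both well-controlled. The analytical heart — the univariate Gauss--Lobatto estimate for analytic functions with two-scale regularity — is presumed available in Appendix~\ref{sec:AppB}; everything else is a tensor-product reduction via the commutation properties above.
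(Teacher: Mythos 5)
Your proof follows the same approach as the paper: the tensor-product factorization $\PiGL = i_q^{\xh}\otimes i_q^{\yh}$, the identity $u - \PiGL u = (I - i_q^{\yh})u + (I - i_q^{\xh})(i_q^{\yh}u)$, commuting the partial derivative with the interpolant in the other variable, the univariate Lebesgue-constant and Markov-type stability bounds (the paper's Lemma~\ref{lemma:1d-GL}), and the reduction of item~(\ref{item:lemma:hat_Pi_1_infty-ii}) to the univariate two-scale approximation result (the paper's Lemma~\ref{lemma:1D-poly-approx}). That is exactly what the authors do. One small correction in your account of where the exponent-$q$ terms come from: the Appendix does not contain a ``univariate derivative estimate that loses one power of the decay rate''; Lemma~\ref{lemma:1D-poly-approx} gives the exponent $q+1$ for the full $W^{1,\infty}$ best-approximation error, and the exponent-$q$ form of the $\xh$-terms in the statement is a bookkeeping artifact — the prefactor $h_x/\varepsilon_x$ is first extracted from the regularity bound via $\max\{2,\varepsilon_x^{-1}\}\lesssim \varepsilon_x^{-1}$, and the surplus polynomial factors from $\Lambda_q$ and the Markov estimate in Lemma~\ref{lemma:1d-GL}~(\ref{eq:lemma:1d-GL-ii}) are absorbed into the geometric rates by adjusting $\eta$ under the resolution condition (\ref{eq:hat-approx-constraint-rectangle}). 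The structure of your argument is unaffected.
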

\begin{proof}
Items (\ref{item:lemma:hat_Pi_1_infty--1}), 
(\ref{item:lemma:hat_Pi_1_infty-0}) are well-known. 
We let $\Lambda_q$ denote the Lebesgue constant of the 
univariate Gauss-Lobatto interpolation operator of polynomial degree $q\in {\mathbb N}$
(cf. Lemma~\ref{lemma:1d-GL}).
The $L^\infty$-stability 
in (\ref{item:lemma:hat_Pi_1_infty-i}) 
follows from tensor product arguments, viz. 
$\|\widehat \Pi^{\Box}_q u\|_{L^\infty(\widehat S)} 
    \leq \Lambda_q^2 \|u\|_{L^\infty(\widehat S)}$ 
and the (generous) bound $\Lambda_q^2 \leq C q$ for $q\geq 1$.
For the remaining estimates, we introduce the tensor-product 
Gauss-Lobatto interpolation operator 
$\PiGL = i_q \otimes i_q = i^\xh_q \otimes i^\yh_q$, where we use 
the superscipts $\xh$ and $\yh$ to emphasize 
the
variable with respect to which the univariate Gauss-Lobatto 
interpolant acts. 
From 
\begin{align*}
u - i^\yh_q \otimes i^\xh_q u &= 
u - (\operatorname{I} \otimes i^\yh_q) u 
+ \operatorname{I} \otimes i^\yh_q (u - i^\xh_q \otimes \operatorname{I} u) 
\end{align*}
we get in view of the univariate 
stability bound Lemma~\ref{lemma:1d-GL} 
\begin{align} 
\nonumber 
\|\partial_\xh (u - i^\xh_q \otimes i^\yh_p u)\|_{L^\infty(\widehat S)} & \lesssim
\Lambda_q \sup_{\xh \in (0,1)} \inf_{v \in {\bbP}_q} 
\|\partial_\xh u(x,\cdot) - v\|_{L^\infty(0,1)} \\
\label{eq:2d-GL-der}
& \quad \mbox{} +   
q^2 \Lambda_q^2 \sup_{\yh \in (0,1)} \inf_{v \in {\bbP}_q} 
\|\partial_\xh (u(\cdot,\yh) - v)\|_{L^\infty(0,1)}; 
\end{align}
an analogous estimate holds for $\partial_y (u - i^\xh_q \otimes i^\yh_q u)$. 
The estimate \eqref{eq:2d-GL-der} gives the stability estimates in 
$W^{1,\infty}$ of (\ref{item:lemma:hat_Pi_1_infty-i}) by selecting
$v = 0$ in the infima. 
The estimate \eqref{eq:2d-GL-der} reduces the question of approximation 
on $\widehat{S}$ to questions of univariate polynomial approximation.  
The pertinent approximation results to prove 
item~(\ref{item:lemma:hat_Pi_1_infty-ii}) are given in 
Lemma~\ref{lemma:1D-poly-approx}.
\end{proof}
\subsection{Approximation on the reference patches}
\label{sec:approx-reference-patches}
In this section, we study the approximation of functions on the reference patches
(or the half-patches) described in 
Defs.~\ref{def:admissible-patterns}, \ref{def:half-patches}. 
The non-trivial reference patches
consist of meshes that are refined towards $\bO$, which can resolve algebraic singularities
at $\bO$, and  meshes that are anisotropically refined towards the edge $\{\yt = 0\}$, which 
can resolve algebraic singularities at $\{\yt=0\}$ or boundary layers. We show exponential
approximability of functions that have algebraic singularities at $\bO$ or boundary
layers at $\{\yt=0\}$. 

Throughout this section, we will use the notation 
\begin{equation}
\label{eq:def-rt}
\rt(\cdot):= \operatorname{dist}(\bO,\cdot) .
\end{equation}
In this section, 
we present piecewise polynomial approximations on reference patches using the 
following elementwise defined interpolation operator: 
\begin{equation}
\label{eq:tildePiq} 
(\widetilde \Pi_q)|_{\Kt} u:= 
\begin{cases} 
\widehat\Pi_q^\triangle (u \circ A_\Kt) & \mbox{ if $\Kt$ is a triangle $\triangle$} \\
\widehat\Pi_q^{\Box} (u \circ A_\Kt) & \mbox{ if $\Kt$ is a rectangle $\Box$}, \\
\end{cases} 
\end{equation}
where $A_\Kt: \widehat K \rightarrow \Kt = A_\Kt(\widehat K)\subset \widetilde S$ is the
affine bijection between the reference element and the 
corresponding element on the reference patch.
The edge-traces of the interpolators $\widehat \Pi_q^\triangle$ and $\widehat \Pi_q^{\Box}$ 
coincide with the univariate Gauss-Lobatto interpolation operator on the edges of $\Kh$.
Hence, $H^1$-conformity of the elementwise defined operator $\widetilde \Pi_q$ is ensured. 
We will frequently use the stability estimates
\begin{equation}
\label{eq:tildePiq-stable}
\|\widetilde \Pi_q u\|_{L^\infty(\Kt)} 
\leq C q^2 \|u\|_{L^\infty(\Kt)}, 
\qquad 
\|\nabla \widetilde \Pi_q u\|_{L^\infty(\Kt)}
\leq C q^4 \|\nabla u\|_{L^\infty(\Kt)}; 
\end{equation}
these estimates are easily seen to hold
for triangles with the isotropic scaling property 
and Lemma~\ref{lemma:hat_Pi_infty}, (\ref{item:lemma:hat_Pi_infty-i}). 
The anisotropic nature of the rectangles 
is accounted for by separately scaling the bounds for the partial derivatives
    in Lemma~\ref{lemma:hat_Pi_1_infty}, (\ref{item:lemma:hat_Pi_1_infty-i}). 
%
\subsubsection{$hp$-FE approximation of corner singularity functions}
\label{sec:singularity-fct-approx-geo-mesh}
%
%
\begin{lemma}[approximation of corner singularity functions]
\label{lemma:asymptotic-case}
\begin{enumerate}[(i)]
\item
\label{item:lemma:asymptotic-case-i}
Let $\widetilde{\calT} \in \{
\widetilde{\calT}^{\Mi,\text{\rm half},L,n}_{geo,\sigma}, 
\widetilde{\calT}^{\Co,\text{\rm half},n}_{geo,\sigma}, 
\widetilde{\calT}^{\Co,\text{\rm half},\text{\rm flip}, n}_{geo,\sigma}, 
\widetilde{\calT}^{\Co,n}_{geo,\sigma}, 
\widetilde{\calT}^{\Te,n}_{geo,\sigma}\}. 
$
Let $\calO$ be the region covered by the elements of $\widetilde{\calT}$, i.e., 
let $\calO = \widetilde S$ if $\widetilde{\calT}$ is a reference patch, 
$\calO = \widetilde T$ if 
$\widetilde{\calT} \in \{ \widetilde{\calT}^{\Co,\text{\rm half},n}_{geo,\sigma}, \widetilde{\calT}^{\Mi,\text{\rm half},L,n}_{geo,\sigma}\}$ 
is a reference half-patch, and 
$\calO = \Tf$ if 
$\widetilde{\calT} = \widetilde{\calT}^{\Co,\text{\rm half},\text{\rm flip},n}_{geo,\sigma}$. 
Let $\widetilde u$ be analytic on $\overline{\calO}$ 
and assume there exist constants
$\varepsilon \in (0,1]$, 
$\beta \in [0,1)$, $\gamma$, $C_u > 0$
such that for all $p \in {\mathbb N}_0$ and all $\widetilde \bx \in \widetilde S$
\begin{equation}
\label{eq:lemma:asymptotic-case-10}
|\nabla^p (\ut(\widetilde \bx)- \ut(\bO))|
\leq 
C_u \varepsilon^{-1} \gamma^{p} (\rt(\boxt)/\varepsilon)^{1-\beta} \rt(\boxt)^{-p} \max\{p+1,\rt(\boxt)/\varepsilon\}^{p+1} 
\;.
\end{equation}
Then, 
there are constants $C$, $b$, $\kappa > 0$ 
depending only on $\gamma$, $\sigma$, and $\beta$ 
(in particular, independent of $\varepsilon$, $n$, $L$) 
such that under the scale resolution condition
\begin{equation}
\label{eq:lemma:asymptotic-case-20}
q \varepsilon \ge \kappa   
\end{equation}
there holds 
\begin{equation}
\|\ut - \widetilde \Pi_q \ut\|_{L^\infty(\calO)}
+ \|\nabla (\ut - \widetilde \Pi_q \ut)\|_{L^2(\calO)}
\leq 
C C_u \left( q^{9} \sigma^{n(1-\beta)} + e^{-b q} \right).
\end{equation}
\item
\label{item:lemma:asymptotic-case-ii}
Let $\widetilde{\calT} \in \{\widetilde{\calT}^{\BL,L},\widetilde{S}\}$. 
Let $\widetilde u$ be analytic on $\widetilde{S}$ and 
assume that there are constants $C_u$, $\gamma>0$
such that for all $p\in {\mathbb N}_0$
\begin{equation}
\|\nabla^p \ut\|_{L^\infty(\widetilde S)}
\leq C_u \gamma^{p} \max\{p+1,\varepsilon^{-1}\}^{p+1}. 
\end{equation}
Then there are constants $C$, $b$, $\kappa > 0$ 
depending only on $\gamma$ and $\sigma$ (in particular, they are independent
of $\varepsilon$ and $L$) such that under the constraint 
(\ref{eq:lemma:asymptotic-case-20}) there holds 
\begin{equation}
\|\ut - \widetilde \Pi_q \ut\|_{W^{1,\infty}(\widetilde{S})}
\leq C C_u e^{-b q}. 
\end{equation}
\end{enumerate}
\end{lemma}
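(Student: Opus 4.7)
The plan is to prove both claims by decomposing each reference patch $\calO$ into an outer region and (for claim (i)) a corner region, and to handle these by different arguments. The scale-resolution hypothesis $q\eps \ge \kappa$, with $\kappa$ chosen sufficiently large depending on $\gamma$ and $\sigma$, is used for two purposes: to verify the local scale-resolution conditions \eqref{eq:hat-approx-constraint-triangle} and \eqref{eq:hat-approx-constraint-rectangle} on every element, and to bound every negative power of $\eps$ by a positive power of $q$.

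For claim (ii), every element $\Kt$ of the trivial patch or of $\widetilde{\calT}^{\BL,L}_{geo,\sigma}$ is a rectangle with $h_{\Kt,\xt}, h_{\Kt,\yt} \le 1$. Pulling back via $A_\Kt$ and using $|\partial_{\xt}^m \partial_{\yt}^n \ut| \le |\nabla^{m+n}\ut|$, the hypothesis is seen to fit \eqref{eq:lemma:hat_Pi_1_infty-reg} with $h_x = h_{\Kt,\xt}$, $h_y = h_{\Kt,\yt}$, $\eps_x = \eps_y = \eps$, after absorbing a bounded number of factors $\max\{1,\eps^{-1}\} \lesssim q$ into the constant. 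Since $h_{\Kt,(\cdot)} \le 1$, choosing $\kappa$ sufficiently large ensures \eqref{eq:hat-approx-constraint-rectangle}, and Lemma~\ref{lemma:hat_Pi_1_infty}(\ref{item:lemma:hat_Pi_1_infty-ii}) then yields a per-element $W^{1,\infty}$-error of order $C_u e^{-bq}$; taking the maximum over the finitely many elements proves (ii).

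For claim (i), split the elements of $\widetilde{\calT}$ into the collection $\mathcal{E}_{\mathrm{cor}}$ of (uniformly boundedly many) triangles abutting $\bO$ with $h_\Kt \sim \sigma^n$, and the collection $\mathcal{E}_{\mathrm{out}}$ of all remaining elements. On every $\Kt \in \mathcal{E}_{\mathrm{out}}$, items~(\ref{item:lemma:properties-mesh-patches-ii})--(\ref{item:lemma:properties-mesh-patches-iv}) of Lemma~\ref{lemma:properties-mesh-patches} give $\rt \sim h_\Kt$ (triangles) or $\rt \sim h_{\Kt,\max}$ (rectangles) throughout $\Kt$, so the singular factor $(\rt/\eps)^{1-\beta}$ in \eqref{eq:lemma:asymptotic-case-10} is essentially constant on $\Kt$. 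The hypothesis then reduces, after pull-back via $A_\Kt$, to the form required by Lemma~\ref{lemma:hat_Pi_infty}(\ref{item:lemma:hat_Pi_infty-iii}) or Lemma~\ref{lemma:hat_Pi_1_infty}(\ref{item:lemma:hat_Pi_1_infty-ii}), with effective small parameter $\eps' = \eps/h_\Kt$ and prefactor $\tilde C_u \lesssim C_u \eps^{-1}(h_\Kt/\eps)^{1-\beta}\max\{1, h_\Kt/\eps\}$. The resulting per-element bound $\tilde C_u e^{-bq}\min\{1,h_\Kt/\eps\}$ simplifies, via the identity $\min\{1,t\}\max\{1,t\}=t$, to $\lesssim C_u h_\Kt^{1-\beta}\eps^{-(2-\beta)} e^{-bq}$. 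Summing squares over $\mathcal{E}_{\mathrm{out}}$ with the help of Lemma~\ref{lemma:properties-mesh-patches}(\ref{item:lemma:properties-mesh-patches-v})--(\ref{item:lemma:properties-mesh-patches-vi}) with $\delta = 2(1-\beta)$, and absorbing $\eps^{-(2-\beta)} \lesssim q^{2-\beta}$ into the exponential, bounds the entire outer-region contribution to the $L^\infty$-norm and the $L^2$-norm of the gradient by $C C_u e^{-b'q}$ for a slightly smaller $b'$.

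The main technical obstacle is the treatment of the corner elements $\Kt \in \mathcal{E}_{\mathrm{cor}}$, where the regularity \eqref{eq:lemma:asymptotic-case-10} is genuinely singular and the exponential approximation results cannot be applied directly. The key idea is to exploit that $\widetilde\Pi_q$ preserves constants, so setting $v := \ut - \ut(\bO)$ one has $\ut - \widetilde\Pi_q \ut = v - \widetilde\Pi_q v$; the $L^\infty$-stability \eqref{eq:tildePiq-stable} then gives $\|\ut - \widetilde\Pi_q\ut\|_{L^\infty(\Kt)} \le (1+Cq^2)\|v\|_{L^\infty(\Kt)}$, and applying \eqref{eq:lemma:asymptotic-case-10} with $p = 0$ on $\Kt$ (where $\rt \lesssim \sigma^n$), together with a case distinction on whether $\sigma^n \le \eps$ and the bound $\eps^{-1} \lesssim q$, yields $\|v\|_{L^\infty(\Kt)} \lesssim C_u q^{3-\beta}\sigma^{n(1-\beta)}$. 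For the $L^2$-gradient we combine a polynomial inverse estimate with this $L^\infty$ bound to obtain
\[
\|\nabla \widetilde\Pi_q \ut\|_{L^2(\Kt)} = \|\nabla \widetilde\Pi_q v\|_{L^2(\Kt)} \lesssim q^4 \|v\|_{L^\infty(\Kt)} \lesssim C_u q^{7-\beta}\sigma^{n(1-\beta)},
\]
and bound $\|\nabla\ut\|_{L^2(\Kt)}$ directly from \eqref{eq:lemma:asymptotic-case-10} with $p = 1$: in the subregime $\sigma^n \le \eps$ the singularity $\rt^{-\beta}$ is integrable in two dimensions for $\beta < 1$ and yields $\lesssim C_u q^{2-\beta}\sigma^{n(1-\beta)}$, while in the subregime $\sigma^n > \eps$ a split of $\Kt$ into $\{\rt \le \eps\}$ and $\{\rt > \eps\}$ gives $\lesssim C_u q^4$, which is in turn $\lesssim C_u q^5 \sigma^{n(1-\beta)}$ by virtue of $\sigma^{n(1-\beta)} \gtrsim \eps^{1-\beta} \gtrsim q^{-(1-\beta)}$ in this subregime. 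Summing over the bounded number of elements in $\mathcal{E}_{\mathrm{cor}}$ and combining with the outer-region estimate yields the asserted bound $C C_u(q^{9} \sigma^{n(1-\beta)} + e^{-bq})$.
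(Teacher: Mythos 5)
Your proof is correct but handles the elements abutting on $\bO$ by a genuinely different argument than the paper. The paper invokes Lemma~\ref{lemma:hat_Pi_infty}(\ref{item:lemma:hat_Pi_infty-ii}), a weighted $H^2$-approximation bound with weight $\rt^{\widetilde\beta}$, applied to $\nabla^2\ut \sim \rt^{-1-\beta}$ (cf.\ \eqref{eq:thm:singular-approx-16}--\eqref{eq:thm:singular-approx-17}). You instead exploit that $\widetilde\Pi_q$ reproduces constants, set $v := \ut-\ut(\bO)$, and combine the $L^\infty$-stability \eqref{eq:tildePiq-stable}, a polynomial inverse estimate for $\nabla\widetilde\Pi_q v$, and direct estimation of $\|v\|_{L^\infty(\Kt)}$ (from \eqref{eq:lemma:asymptotic-case-10} at $p=0$) and of $\|\nabla v\|_{L^2(\Kt)}$ (from \eqref{eq:lemma:asymptotic-case-10} at $p=1$, using two-dimensional integrability of $\rt^{-2\beta}$ for $\beta<1$). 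This trades the weighted-$H^2$ result for more elementary inverse/stability tools; both routes yield the same $\sigma^{n(1-\beta)}$ factor from $h_\Kt\sim\sigma^n$. On the outer elements the two arguments are structurally the same (reduce to Lemmas~\ref{lemma:hat_Pi_infty}(\ref{item:lemma:hat_Pi_infty-iii})/\ref{lemma:hat_Pi_1_infty}(\ref{item:lemma:hat_Pi_1_infty-ii}) via scaling, then sum via Lemma~\ref{lemma:properties-mesh-patches}), though you keep the effective small parameter $\eps/h_\Kt$ in the max while the paper uses \eqref{eq:estimate-max} to extract a factor $e^{q/\kappa}$ that is later absorbed; these are equivalent in effect. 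One small slip: the $\min\cdot\max$ simplification actually gives a per-element bound $C_u h_\Kt^{2-\beta}\eps^{-(3-\beta)}e^{-bq}$ rather than $C_u h_\Kt^{1-\beta}\eps^{-(2-\beta)}e^{-bq}$; since $h_\Kt\leq 1$ and negative powers of $\eps$ are absorbed via $q\eps\geq\kappa$, this does not affect the conclusion, but it is worth correcting.
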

\begin{proof}
\emph{Proof of (\ref{item:lemma:asymptotic-case-i}):} 
\emph{Step 1: Elements abutting on $\bO$:}
Only triangles $\triangle$ may abut on $\bO$. 
Let $\Kt$ be such a triangle. 
From (\ref{eq:lemma:asymptotic-case-10}) and estimating (generously)
$\max\{1,\rt(\cdot)/\varepsilon\}^3 \lesssim \varepsilon^{-3}$, 
we get the existence of $C>0$ independent of $\varepsilon \in (0,1]$
with 
\begin{equation}
\label{eq:thm:singular-approx-16}
\forall \boxt \in \Kt: \quad 
|\nabla^2 \ut(\boxt)| 
\leq C (\rt(\boxt))^{-1-\beta} \varepsilon^{-5+\beta}. 
\end{equation}
By scaling this bound and invoking
Lemma~\ref{lemma:hat_Pi_infty}, (\ref{item:lemma:hat_Pi_infty-ii}), 
we get with $h_{\Kt}:=\operatorname{diam} \Kt$
for any fixed $\widetilde \beta \in (\beta,1)$ 
\begin{align}
\label{eq:thm:singular-approx-17}
& \|\ut - \tPiT \ut\|_{L^\infty(\Kt)} + 
\|\nabla( \ut - \tPiT \ut)\|_{L^2(\Kt)} 
\leq C q^4 h_{\Kt}^{1-\widetilde \beta} \|\rt^{\widetilde \beta} \nabla^2 \ut\|_{L^2(\Kt)} 
\\
\nonumber 
& \qquad 
\stackrel{(\ref{eq:thm:singular-approx-16})}{\leq} 
C q^4 h_{\Kt}^{1-\widetilde \beta} \varepsilon^{-5+\beta} h_{\Kt}^{\widetilde \beta - \beta} 
\stackrel{q\varepsilon \ge \kappa}{\leq} C  q^{4+5-\beta} h_{\Kt}^{1-\beta}
\lesssim  q^{9-\beta} \sigma^{n(1-\beta)}. 
\end{align}

\emph{Step 2: Elements not abutting on $\bO$:}
From Lemma~\ref{lemma:properties-mesh-patches}, (\ref{item:lemma:properties-mesh-patches-iva})
we get $\rt (\cdot) \lesssim h_{\Kt}= \operatorname{diam} \Kt$ on $\Kt$. 
The regularity assumption (\ref{eq:lemma:asymptotic-case-10}) then implies 
that there exist (suitably adjusted) constants $C$, $\gamma$ such that 
for all $p \in {\mathbb N}_0$
\begin{align}
\label{eq:thm:singular-approx-30}
\|\nabla^p (\widetilde u - \widetilde u(\bO))\|_{L^\infty(\Kt)} 
&\leq 
C \gamma^p \varepsilon^{-3+\beta} h_{\Kt}^{1-\beta} \rt^{-p} 
   \max\{p+1,\varepsilon^{-1}\}^p  \;.
\end{align}
We now consider the approximation on triangles and rectangles separately. 

\emph{Step 2.1: $\Kt$ is a triangle $\triangle$.} 
Lemma~\ref{lemma:properties-mesh-patches}, 
(\ref{item:lemma:properties-mesh-patches-ii}) 
implies
in particular that $h_{\Kt} \lesssim \rt (\cdot)$ on $\Kt$. 
Scaling the bounds (\ref{eq:thm:singular-approx-30})
to the reference element $\widehat K = \widehat T$ therefore gives 
for $\widehat u:= \ut \circ A_{\Kt}$, where $A_{\Kt}:\Kh\rightarrow \Kt$ 
is the affine element map for $\Kt$, 
the existence of constants $C$, $\gamma>0$ such that 
%
\begin{equation} \label{eq:thm:singular-approx-45}
\forall p \in {\mathbb N}_0 \colon \;\;
\|\widehat{\nabla}^p (\widehat u - \widehat u(\bO))\|_{L^\infty(\Kh)} 
\leq C \gamma^p \varepsilon^{-3+\beta} 
h_{\Kt}^{1-\beta} 
\max\{p+1,\varepsilon^{-1}\}^{p}
\;. 
\end{equation}
In order to be able to apply the approximation properties of 
Lemma~\ref{lemma:hat_Pi_infty}, we note 
\begin{align}
\nonumber 
\max\{p+1,\varepsilon^{-1}\}^p  &= 
\max\{(p+1)^p,\varepsilon^{-p} (p+1)^{-p} (p+1)^p\} \\
\nonumber 
& = 
(p+1)^p \max\{1,\varepsilon^{-p} (p+1)^{-p}\} 
\leq (p+1)^p \max\left\{1,\frac{(1/\varepsilon)^p}{p!}\right\} \\
\label{eq:estimate-max}
& \leq 
(p+1)^p e^{1/\varepsilon}  
 \stackrel{q \varepsilon \ge \kappa}{\leq} (p+1)^p e^{q/\kappa} . 
\end{align}
Inserting \eqref{eq:estimate-max} into \eqref{eq:thm:singular-approx-45}
yields that there are constants $C>0$, $\gamma>0$ such that
\begin{equation}
\label{eq:thm:singular-approx-47}
\forall p\in {\mathbb N}_0\colon \;\; 
\|\nabla^p (\widehat u - \widehat u(\bO))\|_{L^\infty(\Kh)}
\leq C e^{q/\kappa} \varepsilon^{-3+\beta} h_{\Kt}^{1-\beta} \gamma^p 
(p+1)^p.
\end{equation}
We are in a position to apply Lemma~\ref{lemma:hat_Pi_infty}. 
The parameter $\delta$ in (\ref{eq:hat-approx-constraint-triangle}) 
is determined by $\gamma$. In view of $q \varepsilon \ge \kappa$, 
we can ensure condition 
\eqref{eq:hat-approx-constraint-triangle} by selecting $\kappa$
sufficiently large to obtain from Lemma~\ref{lemma:hat_Pi_infty} with
a $b > 0$ depending only on $\gamma$ 
\begin{equation*}
\|\widehat u - \PiT \widehat u\|_{W^{1,\infty}(\widehat K)} 
\leq C h_{\Kt}^{1-\beta} e^{q/\kappa} \varepsilon^{-3+\beta} e^{-bq}, 
\end{equation*}
We may assume that $\kappa$ is so large that 
$1/\kappa - b \leq - b/2$ to absorb $e^{q/\kappa}$. 
Finally, in view of $q \varepsilon \ge \kappa$, we can also absorb 
the factor $\varepsilon^{-3 + \beta} \lesssim q^{3-\beta}$ in the 
exponentially decaying one by adjusting $b$. 
Upon scaling from $\widehat{K}$ to $\widetilde{K}$ we get the existence
of 
constants $b$, $C>0$  such that 
\begin{equation}
\label{eq:thm:singular-approx-49}
\forall q\in \bbN \colon\;\;
\|\ut - \tPiT \ut\|_{L^\infty(\Kt)}  + 
\|\nabla( \ut - \tPiT \ut)\|_
{L^2(\Kt)} 
\leq C h_{\Kt}^{1-\beta} e^{-bq}.
\end{equation}

\emph{Step 2.2: $\Kt$ is a rectangle $\Box$.} 
We argue as in the case of a triangle in Step~{2.1}.
Starting point is again the regularity assertion 
\eqref{eq:lemma:asymptotic-case-10}. 
The rectangle $\Kt$ has side lengths $h_{\Kt,\yt} \leq h_{\Kt,\xt} \leq 1$.
From Lemma~\ref{lemma:properties-mesh-patches}, (\ref{item:lemma:properties-mesh-patches-iv})
we have $h_{\Kt,\yt} \leq h_{\Kt,\xt} \leq C \rt (\cdot)$ on $\Kt$. 
Hence, the (anisotropic) scaling to the reference square
$\widehat{S}$
of the estimates \eqref{eq:lemma:asymptotic-case-10}
yields, for all $(n,m) \in {\mathbb N}_0^2$, in view of 
\eqref{eq:estimate-max}
\begin{align}
\nonumber 
\|\partial_x^m \partial_y^n (\widehat{u} - \widehat u(\bO))\|_{L^\infty(\widehat{S})}
&\leq C \varepsilon^{-3+\beta} h_{\Kt,\xt}^{1-\beta} \gamma^{n+m} 
h_{\Kt,\xt}^m h_{\Kt,\yt}^n h_{\Kt,\xt}^{-(n+m)} 
e^{q/\kappa} (n+m)^{n+m}  \\
\label{eq:thm:singular-approx-60}
&\stackrel{(\ref{eq:binom})}{\leq} \varepsilon^{-3+\beta} h_{\Kt,\xt}^{1-\beta} \gamma^{n+m} 
e^{q/\kappa} n! m!,
\end{align}
where we again suitably adjusted the value of $\gamma$.
Lemma~\ref{lemma:hat_Pi_1_infty} 
(with $h_y = h_{\Kt,\yt}/h_{\Kt,\xt} \leq 1$ and $\varepsilon_x = \varepsilon_y = 1$ there) 
yields with the regularity estimates \eqref{eq:thm:singular-approx-60}
the existence of constants $C$, $b>0$ such that 
for all $q\in {\mathbb N}_0$
\begin{align*}
\|\widehat u - \PiGL \widehat u\|_{L^\infty(\widehat K)} + 
\|\partial_\xh (\widehat u - \PiGL \widehat u)\|_{L^\infty(\widehat K)}
&\leq C \varepsilon^{-3+\beta} h_{\Kt,\xt}^{1-\beta} e^{-b q}, \\
\|\partial_\yh (\widehat u - \PiGL \widehat u)\|_{L^\infty(\widehat K)}
&\leq C \varepsilon^{-3+\beta} h_{\Kt,\xt}^{1-\beta} \frac{h_{\Kt,\yt}}{h_{\Kt,\xt}}e^{-b q}, 
\end{align*}
where the factor $e^{q/\kappa}$ was absorbed again in the 
exponentially decaying term by taking $\kappa$ sufficiently large. 
We obtain on $\widetilde{S}$
\begin{subequations}
\label{eq:thm:singular-approx-70} 
\begin{align}
\nonumber 
\nonumber 
\|\partial_\xt (\ut - \tPiGL \ut)\|_{L^2(\widetilde{S})}
&\leq 
C \varepsilon^{-3+\beta} 
\sqrt{h_{\Kt,\xt} h_{\Kt,\yt}} h_{\Kt,\xt}^{-1} h_{\Kt,\xt}^{1-\beta} e^{-b q} 
\\
& \stackrel{q \varepsilon \ge \kappa}{\leq}
C \sqrt{h_{\Kt,\yt}/h_{\Kt,\xt}} h_{\Kt,\xt}^{1-\beta} e^{-b q}, 
\\
\|\partial_\yt (\ut - \tPiGL \ut)\|_{L^2(\widetilde{S})}
&\leq C \sqrt{{h_{\Kt,\yt}}/{h_{\Kt,\xt}}} h_{\Kt,\xt}^{1-\beta} e^{-b q}, 
\\
\|(\ut - \tPiGL \ut)\|_{L^\infty(\widetilde{S})}
&\leq 
C h_{\Kt,\xt}^{1-\beta} e^{-b q},
\end{align}
\end{subequations}
where again we adjusted the values of the constants $b$, $C$ 
in both estimates to absorb algebraic factors in $q$.  

\emph{Step 3: Summation of the elemental errors:}
We note that the element size $h_{\Kt}$ 
of the elements abutting on $\bO$ is $h_{\Kt} \sim \sigma^n$.  
For the finitely many contributions 
from the (triangular) elements $\Kt$ touching $\bO$
we have by (\ref{eq:thm:singular-approx-17}) 
the existence of $C>0$ such that
for every $q\geq 1$ 
\begin{align*}
\sum_{\Kt\colon  \bO \in \overline{\Kt}}
\|\ut - \widetilde \Pi_q \ut\|^2_{H^1(\Kt)} 
&\stackrel{\eqref{eq:thm:singular-approx-17}}{\leq} 
C q^{18} \sigma^{2n (1-\beta)}. 
\end{align*}
The sum of squared error contributions
over all triangular elements not touching $\bO$ 
is also bounded by $e^{-2 b q}$ by combining (\ref{eq:thm:singular-approx-49}) 
and Lemma~\ref{lemma:properties-mesh-patches}, (\ref{item:lemma:properties-mesh-patches-v}). 
Likewise, the sum over all rectangular elements is bounded by 
$e^{-2 b q}$ by combining (\ref{eq:thm:singular-approx-70}) 
and Lemma~\ref{lemma:properties-mesh-patches}, (\ref{item:lemma:properties-mesh-patches-vi}). 

\emph{Proof of (\ref{item:lemma:asymptotic-case-ii}):} 
The proof is similar to the proof of case 
(\ref{item:lemma:asymptotic-case-i}) and can be obtained from it by 
formally setting $\beta = 0$ and $\rt \equiv 1$ and dropping 
the error contribution $q^9 \sigma^{n (1-\beta)}$ that is due to the 
small elements touching $\bO$. 
\end{proof}
\subsubsection{$hp$-FE approximation of boundary layer functions}
\label{sec:bdy-layer-approx-geo-mesh}
\begin{lemma}[approximation of boundary layer functions]
\label{lemma:bdy-layer-approx} 
Fix $c_1 > 0$.  
\begin{enumerate}[(i)] 
\item 
\label{item:lemma:bdy-layer-approx-i}
Let $\widetilde{\calT} \in \{\widetilde{\calT}^{\Mi,\text{\rm half},L,n}_{geo,\sigma}, 
\widetilde{\calT}^{\BL,L}_{geo,\sigma}\}$. 
Let $\widetilde{\calT}' \subset \widetilde{\calT}$ and 
let $\calO:= \operatorname*{interior} \bigl( \cup \{\overline{\Kt}\,|\, \Kt \in \widetilde{\calT}'\}\bigr)$ 
be the union of the elements of $\widetilde{\calT}'$. 
Let $\widetilde u$ be analytic on $\calO$ and satisfy for some 
$C_u$, $\gamma$, $\alpha > 0$, $\varepsilon \in (0,1]$ 
and for all $(m,n) \in {\mathbb N}_0^2$ and all 
$\boxt =  (\xt,\yt)\in \calO$
\begin{equation}
\label{eq:lemma:bdy-layer-approx-10} 
|\partial_{\xt}^{m} \partial_{\yt}^{n} \widetilde u(\boxt)| 
\leq C_u \gamma^{n+m} m! \max\{n,\varepsilon^{-1}\}^{n} e^{-\alpha \yt/\varepsilon} . 
\end{equation}
Assume that $L$ is such that the \emph{scale resolution condition} 
\begin{equation}
\label{eq:L-eps-resolution}
\sigma^L \leq c_1 \varepsilon
\end{equation}
is satisfied. 
Then there are constants $C$, $b > 0$ depending only on $\gamma$, $\alpha $, $c_1$, $\sigma$ 
such that 
\begin{equation}
\label{lemma:bdy-layer-approx-100} 
\forall q \in {\mathbb N}\colon\;\;
\|\ut - \widetilde \Pi_q \ut\|_{L^\infty(\calO)}
+ \varepsilon \|\nabla (\ut - \widetilde \Pi_q \ut)\|_{L^\infty(\calO)}
\leq C C_u e^{-bq} . 
\end{equation}
\item 
\label{item:lemma:bdy-layer-approx-ii}
Let 
$\widetilde{\calT}^{\prime\prime} \subset 
\widetilde{\calT}^{\Co,\text{\rm half},n}_{geo,\sigma}$ 
or 
$\widetilde{\calT}^{\prime\prime} \subset \widetilde{\calT}^{\Co,\text{\rm half},\text{\rm flip},n}_{geo,\sigma}$. 
Let 
$\calO:= \operatorname*{interior}\bigl(\cup \{\overline{\Kt}\,|\, 
          \Kt \in \widetilde{\calT}^{\prime\prime}\}\bigr)$
be the union of the elements of $\widetilde{\calT}^{\prime\prime}$. 
Let $\widetilde u$ be analytic on $\calO$ and satisfy  for some 
$C_u$, $\gamma$, $\alpha > 0$, $\varepsilon \in (0,1]$ 
\begin{equation}
\label{eq:lemma:bdy-layer-approx-111}
\forall p \in {\mathbb N}_0\ 
\forall \boxt \in \calO\colon \;\;
|\nabla^p \ut(\boxt)|
\leq C_u \gamma^{p} \max\{p,\varepsilon^{-1}\}^{p} e^{-\alpha \rt(\boxt)/\varepsilon}.  
\end{equation}
Assume that $n$ is such that 
$c_1>0$, $n\in \bbN$ satisfies the \emph{scale scale resolution condition}
\begin{equation}
\label{eq:L-eps-resolution-n}
\sigma^n \leq c_1 \varepsilon
\end{equation}
is satisfied.
Then, there are constants 
$C$, $b > 0$ (depending only on $\gamma$, $\alpha $, $c_1$, $\sigma$)
such that 
\begin{equation}
\label{lemma:bdy-layer-approx-110} 
\forall q \in {\mathbb N}\colon \;\;
\|\ut - \widetilde \Pi_q \ut\|_{L^\infty(\calO)}
+ \varepsilon \|\nabla (\ut - \widetilde \Pi_q \ut)\|_{L^\infty(\calO)}
\leq C C_u e^{-bq} . 
\end{equation}
\end{enumerate}
\end{lemma}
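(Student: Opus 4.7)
I will prove (i) in detail; (ii) follows by the same scheme with the anisotropic Lemma~\ref{lemma:hat_Pi_1_infty}(ii) replaced by the isotropic Lemma~\ref{lemma:hat_Pi_infty}(iii) throughout, and with $\rt$ playing the role of $\yt$. Since $\|\cdot\|_{L^\infty(\calO)}$ is the maximum of the elementwise $L^\infty$ norms, it suffices to prove, for every $\Kt \in \widetilde{\calT}'$,
\begin{equation*}
\|\ut - \widetilde\Pi_q \ut\|_{L^\infty(\Kt)} + \varepsilon\|\nabla(\ut - \widetilde\Pi_q\ut)\|_{L^\infty(\Kt)} \leq C C_u e^{-bq},
\end{equation*}
with $C, b$ independent of $\Kt$, $\varepsilon$, $L$, $n$, $q$. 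By the stability bounds \eqref{eq:tildePiq-stable} and the hypothesis $\|\ut\|_{L^\infty}, \varepsilon\|\nabla \ut\|_{L^\infty} \lesssim C_u$, the bound holds trivially when $q \leq q_0$ for any fixed $q_0$ by enlarging $C$. Hence I may assume $q$ exceeds a threshold determined below.

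The core idea is a \emph{$q$-dependent dichotomy} on the element size. Fix $\delta>0$ small, to be chosen according to the constants appearing in Lemma~\ref{lemma:hat_Pi_1_infty}(ii) and Lemma~\ref{lemma:hat_Pi_infty}(iii). I distinguish:
\begin{enumerate}[(A)]
\item \emph{Layer-resolving regime:} $h_{\Kt,\yt}/\varepsilon \leq \delta q$. Here the scale-resolution condition \eqref{eq:hat-approx-constraint-rectangle} with $h_x = h_{\Kt,\xt}\leq 1$, $h_y = h_{\Kt,\yt}$, $\varepsilon_x = 1$, $\varepsilon_y = \varepsilon$ is met (provided $q \geq q_0:= 1/\delta$), and the bound \eqref{eq:lemma:bdy-layer-approx-10} matches the hypothesis \eqref{eq:lemma:hat_Pi_1_infty-reg} of Lemma~\ref{lemma:hat_Pi_1_infty}(ii) after pull-back by the affine map $A_\Kt$ (using $m! \leq (m+1)^m = \max\{m+1,1^{-1}\}^m$). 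Applying the lemma and scaling back, the prefactors $h_{\Kt,\yt}/\varepsilon$ and $h_{\Kt,\xt}$ in the bounds of (ii) cancel precisely against the Jacobian factors $h_{\Kt,\yt}^{-1}$ and $h_{\Kt,\xt}^{-1}$ produced by the chain rule, yielding $C C_u e^{-bq}$ for both the function and the $\varepsilon$-weighted gradient contributions. Triangular elements (relevant only for $\widetilde{\calT}^{\Mi,\text{half},L,n}_{geo,\sigma}$) abut the diagonal and are isotropic with $h_\Kt \sim \yt_\Kt^{\min}$; after bounding $e^{-\alpha \yt/\varepsilon}\leq 1$ and $m!\le p^p$, the regularity collapses to an isotropic estimate fitting Lemma~\ref{lemma:hat_Pi_infty}(iii) (with $h=h_\Kt$), which again gives the desired bound.

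\item \emph{Far-from-layer regime:} $h_{\Kt,\yt}/\varepsilon > \delta q$. Such an element cannot abut $\{\yt=0\}$: the innermost element on both $\widetilde{\calT}^{\BL,L}_{geo,\sigma}$ and $\widetilde{\calT}^{\Mi,\text{half},L,n}_{geo,\sigma}$ has $h_{\Kt,\yt} \sim \sigma^L$, and by \eqref{eq:L-eps-resolution} satisfies $h_{\Kt,\yt}/\varepsilon \leq c_1 \leq \delta q$ whenever $q \geq q_0 := c_1/\delta$, so it lies in (A). For any other element the geometric refinement gives $\yt_\Kt^{\min} \geq \sigma h_{\Kt,\yt}/(1-\sigma)$, hence $\alpha \yt_\Kt^{\min}/\varepsilon \geq b' q$ with $b' = \alpha\sigma\delta/(1-\sigma)$. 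The regularity \eqref{eq:lemma:bdy-layer-approx-10} then yields $\|\ut\|_{L^\infty(\Kt)} \leq C_u e^{-b'q}$ and $\varepsilon\|\nabla\ut\|_{L^\infty(\Kt)} \lesssim C_u e^{-b'q}$. Combined with the stability bounds \eqref{eq:tildePiq-stable}, the algebraic factors in $q$ are absorbed at the cost of slightly decreasing $b'$ to a final $b$.
\end{enumerate}

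Part (ii) runs along the same lines, replacing $\yt$ by $\rt$, $h_{\Kt,\yt}$ by $h_\Kt$, $\sigma^L$ by $\sigma^n$, and applying Lemma~\ref{lemma:hat_Pi_infty}(iii) on triangles. The scale-resolution \eqref{eq:L-eps-resolution-n} ensures that the triangles abutting $\bO$ lie in Regime (A); for all other triangles, Lemma~\ref{lemma:properties-mesh-patches}(\ref{item:lemma:properties-mesh-patches-ii}) gives $\rt_\Kt^{\min} \gtrsim h_\Kt$, so Regime (B) provides the required exponential smallness. The main obstacle is the setup in Regime (A), where one must carefully balance the anisotropic regularity against the anisotropic element geometry so that the scale-resolution hypotheses of the reference-element lemmas are satisfied uniformly; the $q$-dependent cutoff $h_{\Kt,\yt}\lesssim \delta q \varepsilon$ is precisely what makes this work for arbitrary $q \geq q_0$ while letting the complementary regime handle the remaining (coarser) elements via exponential decay of $\ut$ itself.
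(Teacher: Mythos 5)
Your proposal is correct and follows essentially the same approach as the paper's proof: the key idea in both is a per-element dichotomy based on whether the scale-resolution hypothesis of the reference-element interpolation lemmas (Lemma~\ref{lemma:hat_Pi_infty}(iii) for triangles, Lemma~\ref{lemma:hat_Pi_1_infty}(ii) for rectangles) is met, with elements that fail the scale-resolution bound necessarily lying far (in units of $\varepsilon$) from the layer so that the decay of $\ut$ combined with the stability estimates \eqref{eq:tildePiq-stable} yields exponential smallness directly. The paper proves (ii) first and reduces the diagonal-abutting triangles of (i) to it, whereas you prove (i) first — a purely organizational difference; both arguments also exploit that the elements actually touching the layer are automatically in the resolved regime thanks to $\sigma^L \lesssim \varepsilon$.
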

\begin{proof}
\emph{Proof of (\ref{item:lemma:bdy-layer-approx-ii}):} 
We only consider the case $\calT^{\prime\prime} \subset 
\widetilde{\calT}^{\Co,\text{\rm half},n}_{geo,\sigma}$ as the case 
$\calT^{\prime\prime} \subset 
\widetilde{\calT}^{\Co,\text{\rm half},\text{\rm flip},n}_{geo,\sigma}$ is handled
similarly. 
We note that the patch $\widetilde{\calT}^{\Co,\text{\rm half},n}_{geo,\sigma}$ consists of 
triangles only, which are all shape-regular. Let $\Kt \subset \calO$ be a triangle and let 
$h_{\Kt} = \operatorname{diam}\Kt$. 
In the case that $\Kt$ touches $\bO$, 
the condition \eqref{eq:L-eps-resolution-n} implies that 
$h_{\Kt} \lesssim \sigma^n \lesssim c_1 \varepsilon$ so that 
\begin{equation}\label{eq:ScalRes}
\frac{h_{\Kt}}{q \varepsilon} \lesssim \frac{1}{q}. 
\end{equation}
Hence, 
for every fixed choice of the constant $c_1$
there exists $q_0 = q_0(c_1) \in {\mathbb N}$ (independent of $\eps$) 
such that 
for every $q \ge q_0$ one has the 
scale resolution condition \eqref{eq:hat-approx-constraint-triangle}.
Then, 
Lemma~\ref{lemma:hat_Pi_infty}, (\ref{item:lemma:hat_Pi_infty-iii}) 
implies for  suitable $b > 0$ (independent of $\eps$) 
\begin{equation}
\label{eq:thm:singular-approx-100}
\|\widetilde u - \tPiT \widetilde u\|_{L^{\infty}(\Kt)} + \varepsilon 
\|\nabla( \widetilde u - \tPiT \widetilde u)\|_{L^{\infty}(\Kt)} 
\lesssim e^{-bq}. 
\end{equation}
If $\Kt$ does not touch $\bO$, we distinguish between two further cases. 
In the first case,  we assume that $h_{\Kt}/(q \varepsilon) \leq \delta$ 
and proceed as above: The 
scale resolution condition \eqref{eq:hat-approx-constraint-triangle} 
is satisfied, and we arrive again at \eqref{eq:thm:singular-approx-100}. 
In the case $h_{\Kt}/(q \varepsilon) \ge \delta$, 
we note that 
Lemma~\ref{lemma:properties-mesh-patches}, (\ref{item:lemma:properties-mesh-patches-ii}) 
implies 
$\operatorname{dist}(\Kt,\bO) \ge c_2 h_{\Kt} \ge c_2 \delta q \varepsilon$. 
Hence, by the decay properties of $\ut$ in (\ref{eq:lemma:bdy-layer-approx-111})
we have 
\begin{equation}
\|\ut\|_{L^\infty(\Kt)}  + \varepsilon \|\nabla \ut\|_{L^\infty(\Kt)} 
\leq C e^{-\alpha c_2 h_{\Kt}/\varepsilon}
\leq C e^{-\alpha c_2 q \delta}. 
\end{equation}
In view of the  stability properties (\ref{eq:tildePiq-stable}), we conclude 
\begin{align}
\label{eq:thm:singular-approx-110}
\|\ut - \widetilde \Pi_q \ut\|_{L^{\infty}(\Kt)} + 
\varepsilon \|\nabla( \ut - \widetilde \Pi_q \ut)\|_{L^{\infty}(\Kt)} 
\lesssim e^{-bq}. 
\end{align}
\emph{Proof of (\ref{item:lemma:bdy-layer-approx-i}):} 
We distinguish between triangular and rectangular elements. 

\emph{Approximation of $\ut$ on triangular elements $\Kt$:} 
Triangular elements do not appear in boundary layer patches 
$\widetilde{\calT}^{\BL,L}_{geo,\sigma}$ 
but only in $\widetilde{\calT}^{\Mi,\text{\rm half},L,n}_{geo,\sigma}$. 
For patches $\widetilde{\calT}^{\Mi,\text{\rm half},L,n}_{geo,\sigma}$ 
inspection (cf.\ Fig.~\ref{fig:half-figures}) shows that two 
types of triangles occur: the first type are the triangles $\Kt$ 
in $\widetilde T\setminus \widetilde T_1$ on which one has 
$\rt(\xt,\yt) \sim \yt$ (uniformly in $L$, $n$). 
The second type are  the triangles in $\widetilde T_1$. 
For the first type, we have from (\ref{eq:lemma:bdy-layer-approx-10}) 
the regularity assertion 
(with suitably adjusted $C_u$, $\gamma$, $\alpha$ independent of $\varepsilon$)
$$
\forall \boxt \in {\Kt} \;\forall  n \in {\mathbb N}_0\colon \quad 
|\nabla^n \ut(\boxt)| \leq C_u \gamma^n \max\{n,\varepsilon^{-1}\}^n 
e^{-\alpha \rt(\boxt)/\varepsilon} 
.
$$
This is the same regularity assumption that underlies the proof of part 
(\ref{item:lemma:bdy-layer-approx-ii}) of the lemma so that the same arguments
can be brought to bear as in the case of part
(\ref{item:lemma:bdy-layer-approx-ii}). For the second type of triangles, 
i.e., $\Kt \subset \widetilde T_1 \subset (0,\sigma^L)^2$, the resolution
assumption (\ref{eq:L-eps-resolution}) implies for the element size 
$h_{\Kt} \lesssim \sigma^L \lesssim \varepsilon$. Hence, again 
Lemma~\ref{lemma:hat_Pi_infty}, (\ref{item:lemma:hat_Pi_infty-iii}) is applicable 
and yields the desired exponential approximation. 

\emph{Approximation of $\ut$ on rectangular elements $\Kt$:} 
The case of rectangular elements $\Kt$ with side lengths 
$h_{\Kt,x}$, $h_{\Kt,y}$ is similar to the case of triangles. We note that 
the patches $\widetilde{\calT}^{\BL,L}_{geo,\sigma}$
and $\widetilde{\calT}^{\Mi,\text{\rm half},L,n}_{geo,\sigma}$ are such that 
$h_{\Kt,\yt} \leq h_{\Kt,\xt} \leq 1$. 
The anisotropic scaling from $\Kt$ to $\Kh$ and the regularity 
assumption (\ref{eq:lemma:bdy-layer-approx-10}) show that the 
pull-back $\widehat{u}$ to $\Kh$ satisfies for 
all $(m,n) \in {\mathbb N}_0^2$
\begin{equation*}
\|\partial_\xh^m \partial_\yh^n \widehat{u}\|_{L^\infty(\Kh)} 
\leq C e^{-\alpha \operatorname{dist}(\Kt,\{\yt=0\})/\varepsilon} 
h_{\Kt,\xt}^m h_{\Kt,\yt}^n
\gamma^{n+m} m! \max\{n+1,\varepsilon^{-1}\}^n\; . 
\end{equation*}
That is, $\widehat{u}$ satisfies the 
analytic regularity condition (\ref{eq:lemma:hat_Pi_1_infty-reg}) 
with $\varepsilon_y = \varepsilon$, $\varepsilon_x = 1$, $h_x = h_{\Kt,\xt}$, 
$h_y = h_{\Kt,\yt}$ and $C_u = 
C e^{-\alpha \operatorname{dist}(\Kt,\{\yt=0\})/\varepsilon}$. 
We observe that the resolution condition 
(\ref{eq:hat-approx-constraint-rectangle}) 
can be achieved if $\Kt$ touches the line $\{\yt = 0\}$ 
in view of \eqref{eq:L-eps-resolution} provided that 
$q \ge q_0 \ge 1$ for suitable $q_0$ (depending on $c_1$, $\sigma$, $\gamma$).
If $\Kt$ does not touch the line $\{\yt = 0\}$, then two cases may occur: 
If the resolution condition \eqref{eq:hat-approx-constraint-rectangle} 
is still satisfied then we obtain again exponential convergence.
If not, we note that $h_{\Kt,\yt} \leq h_{\Kt,\xt}$ and that 
we may assume $h_{\Kt,\xt}/q \leq \delta/2$ by assuming $q \ge q_0 \ge 1$
(note: trivially, $h_{\Kt,\xt} \leq 1$ so that $q_0 \ge 2/\delta$ will work).
Furthermore, 
Lemma~\ref{lemma:properties-mesh-patches}, (\ref{item:lemma:properties-mesh-patches-iii}) 
reveals again that $\operatorname{dist} (\Kt,\{\yt = 0\}) \ge c_3 h_{\Kt,\yt}$; 
since $h_{\Kt,\yt}/(\varepsilon q) \ge \delta/2$ we 
get $\operatorname{dist}(\Kt,\{\yt=0\})/\varepsilon \ge q  c_3 \delta/2$. 
Hence, 
$\exp(-\alpha \operatorname{dist}(\Kt,\{\yt=0\})/\varepsilon) 
\leq \exp(-q \alpha c_2 \delta/2)$ and we may 
argue 
as in the case of triangles that $\ut$ is exponentially (in $q$) 
small on $\Kt$. 
The stability of $\widetilde \Pi_q$ given in
(\ref{eq:tildePiq-stable}) then concludes the argument. 
\end{proof}
\subsubsection{$hp$-FE approximation of corner layer functions}
\label{sec:corner-layer-approx-geo-mesh}
\begin{lemma}[approximation of corner layer functions]
\label{lemma:corner-layer-approx} 
Fix $c_1 > 0$. 
Let $\widetilde{\calT} \in 
\{
\widetilde{\calT}^{\Mi,\text{\rm half},L,n}_{geo,\sigma}, 
\widetilde{\calT}^{\Co,\text{\rm half},n}_{geo,\sigma}, 
\widetilde{\calT}^{\Co,\text{\rm half},\text{\rm flip}, n}_{geo,\sigma}, 
\widetilde{\calT}^{\Co,n}_{geo,\sigma}, 
\widetilde{\calT}^{\Te,n}_{geo,\sigma}
\}. 
$
Let $\widetilde{\calT}^\prime \subset \widetilde{\calT}$ and 
let $\calO :=\operatorname*{interior} 
     \bigl(\cup \{\overline{\Kt}\,|\, \Kt \in \widetilde{\calT}^\prime\}\bigr)$ 
be the union of the elements of $\widetilde{\calT}^\prime$. 
Let $\ut$ be analytic on $\calO$ and satisfy for some 
$\beta \in [0,1)$, $\varepsilon \in (0,1]$, 
$C_u$, $\gamma$, $\alpha  > 0$
\begin{equation}
\forall \boxt \in \calO \quad \forall p \in {\mathbb N}_0 \colon\;\;
|\nabla^p \ut(\boxt)|
\leq C_u \varepsilon^{\beta-1} \gamma^{p} (\rt(\boxt))^{1-\beta-p} p!  e^{-\alpha \rt(\boxt)/\varepsilon} . 
\end{equation}
Assume that $n \in {\mathbb N}$ is such that the scale resolution condition 
\begin{equation}
\label{eq:L-eps-resolution-n-foo}
\sigma^n \leq c_1 \varepsilon. 
\end{equation}
is satisfied. 
Then there are constants $C$, $b > 0$ depending only on $\gamma$, $\alpha $, $c_1$, $\sigma$, 
and $\beta$ (in particular, they are independent of $\varepsilon$, $q$, $n$, $L$)  such that for all $q \in {\mathbb N}$ 
\begin{align}
\label{eq:lemma:corner-layer-approx-10} 
\|\ut - \widetilde \Pi_q \ut\|_{L^\infty(\calO)}
& \leq C C_u \left( e^{-b q} + q^4 \varepsilon^{\beta-1} \sigma^{n(1-\beta)}
             \right),\\
\label{eq:lemma:corner-layer-approx-20} 
\|\ut - \widetilde \Pi_q \ut\|_{L^2(\calO)}
+ \varepsilon \|\nabla (\ut - \widetilde \Pi_q \ut)\|_{L^2(\calO)}
& \leq C C_u \varepsilon \left( e^{-b q} + q^4 \varepsilon^{\beta-1} \sigma^{n(1-\beta)}
             \right). 
\end{align}
In the estimates  
(\ref{eq:lemma:corner-layer-approx-10}), 
(\ref{eq:lemma:corner-layer-approx-20})
the term $q^4 \varepsilon^{\beta-1} \sigma^{n(1-\beta)}$
can be dropped if $\calO$ does not touch $\bO$. 
\end{lemma}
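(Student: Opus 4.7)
The plan is to dissect the elements of $\widetilde{\calT}'$ by their distance to the vertex $\bO$ and combine the two independent regularity features of $\ut$ (algebraic singularity at $\bO$ of strength $1-\beta$, and exponential decay at rate $\alpha/\eps$ away from $\bO$), treating the algebraic behaviour near $\bO$ as in Lemma~\ref{lemma:asymptotic-case} and the layer behaviour by local arguments analogous to those in Lemma~\ref{lemma:bdy-layer-approx}(ii).

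First I would isolate the (triangular) elements $\Kt$ abutting on $\bO$. By the scale resolution assumption (\ref{eq:L-eps-resolution-n-foo}) their diameter satisfies $h_{\Kt}\lesssim \sigma^n\lesssim c_1\eps$, so the exponential decay factor is only $O(1)$ on them. Using the regularity assumption with $p=2$ and any fixed $\widetilde\beta\in(\beta,1)$, we have $|\rt^{\widetilde\beta}\nabla^2\ut|\lesssim C_u \eps^{\beta-1}\rt^{\widetilde\beta-1-\beta}$, which is $L^2(\Kt)$ since $2(\widetilde\beta-1-\beta)>-2$. Scaling Lemma~\ref{lemma:hat_Pi_infty}(\ref{item:lemma:hat_Pi_infty-ii}) (with $\bA=\bO$) and using $h_{\Kt}\sim\sigma^n$ then yields contributions on each corner element bounded by $C q^4 C_u\eps^{\beta-1}\sigma^{n(1-\beta)}$; there are only finitely many such elements.

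Next I would treat the elements not abutting on $\bO$. By Lemma~\ref{lemma:properties-mesh-patches} items (\ref{item:lemma:properties-mesh-patches-ii}), (\ref{item:lemma:properties-mesh-patches-iv}) we have $\dist(\Kt,\bO)\gtrsim h_{\Kt}$ (for triangles) resp.\ $\dist(\Kt,\bO)\gtrsim h_{\Kt,\max}$ (for rectangles). I would split into two subcases. In the \emph{local resolution} subcase where $h_{\Kt}/(q\eps)\le\delta$ (triangles) or $h_{\Kt,\max}/(q\eps)\le\delta$ (rectangles) with $\delta$ from Lemmas~\ref{lemma:hat_Pi_infty}(\ref{item:lemma:hat_Pi_infty-iii})/\ref{lemma:hat_Pi_1_infty}(\ref{item:lemma:hat_Pi_1_infty-ii}), I would pull back the regularity, use the bound $p!\le C^p\max\{p+1,\eps^{-1}\}^p$ (as is needed to match the hypotheses of those lemmas), absorb the factor $\eps^{\beta-1}h_{\Kt}^{1-\beta}$ into an exponentially decaying term (using the scale resolution in the element), and multiply in the decay factor $e^{-\alpha\dist(\Kt,\bO)/\eps}$. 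In the \emph{non-resolved} subcase where the local ratio exceeds $\delta$, the distance estimate forces $\dist(\Kt,\bO)/\eps\gtrsim q$, so $\ut$ itself is exponentially (in $q$) small on $\Kt$; the elementary stability (\ref{eq:tildePiq-stable}) of $\widetilde\Pi_q$ then yields the same bound up to polynomial factors in $q$, which can be absorbed.

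Finally I would sum the elemental contributions. Each non-corner element contributes $\lesssim e^{-bq}\cdot(\text{weight})$ in the $L^\infty$ and (after multiplying by the square root of the element's area) in the $L^2$ and $\eps$-weighted $H^1$ norms; the aggregated bounds from items (\ref{item:lemma:properties-mesh-patches-v})--(\ref{item:lemma:properties-mesh-patches-viii}) of Lemma~\ref{lemma:properties-mesh-patches} produce the $e^{-bq}$ term in (\ref{eq:lemma:corner-layer-approx-10})--(\ref{eq:lemma:corner-layer-approx-20}), and the few corner-abutting elements produce the $q^4\eps^{\beta-1}\sigma^{n(1-\beta)}$ term. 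If $\calO$ does not touch $\bO$, Step 1 is vacuous, yielding the closing remark. The main technical obstacle I expect is the bookkeeping in the intermediate subcase $h_{\Kt}\gtrsim\eps$ with $\dist(\Kt,\bO)\gg\eps$: one has to carefully trade the algebraic $\eps^{\beta-1}$ prefactor, the polynomial-in-$q$ stability losses, and the exponential decay $e^{-\alpha\dist(\Kt,\bO)/\eps}$ so that everything collapses into a single exponential $e^{-bq}$ after summation, uniformly in $\eps$, $n$, $L$.
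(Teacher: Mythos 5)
Your proposal is correct and leads to the same per-element bounds and the same aggregated estimates. It differs from the paper's argument at one point: for elements $\Kt$ away from $\bO$ you propose converting the factorial bound $p!$ into the $\max\{p+1,\eps^{-1}\}^p$-form required by Lemma~\ref{lemma:hat_Pi_infty}(\ref{item:lemma:hat_Pi_infty-iii}) / Lemma~\ref{lemma:hat_Pi_1_infty}(\ref{item:lemma:hat_Pi_1_infty-ii}), and then running the two-case dichotomy (``scale resolution satisfied / not'') as in the proof of Lemma~\ref{lemma:bdy-layer-approx}(\ref{item:lemma:bdy-layer-approx-ii}). The paper instead notices that after the scaling $\rt\sim h_{\Kt}$ on such $\Kt$, the pull-back $\widehat u$ satisfies $|\nabla^p\widehat u|\lesssim C_u'\gamma^p p!$ with $C_u'\sim C_u(h_{\Kt}/\eps)^{1-\beta}e^{-\alpha' h_{\Kt}/\eps}$: the factorial growth $p!\le(p+1)^p$ is the $\eps$-\emph{independent} form of the hypotheses of those lemmas, so the scale resolution conditions \eqref{eq:hat-approx-constraint-triangle}, \eqref{eq:hat-approx-constraint-rectangle} reduce to $1/q\le\delta$ and hold for all $q$ above a fixed $q_0$ -- no dichotomy needed. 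This yields directly the per-element bound (e.g., $C(h_{\Kt}/\eps)^{1-\beta}e^{-bq}e^{-\alpha h_{\Kt}/\eps}$ for triangles), after which the $L^\infty$ estimate follows from $\sup_{t>0}t^{1-\beta}e^{-t}<\infty$ and the energy-norm estimate from Lemma~\ref{lemma:properties-mesh-patches}(\ref{item:lemma:properties-mesh-patches-vii})--(\ref{item:lemma:properties-mesh-patches-viii}) (items (\ref{item:lemma:properties-mesh-patches-v})--(\ref{item:lemma:properties-mesh-patches-vi}), which you also cite, are not needed here since they lack the $e^{-\alpha h_{\Kt}/\eps}$ decay factor). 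Your route is valid and mirrors the boundary-layer proof, but the factorial growth in the corner-layer regularity buys you an $\eps$-free analyticity radius and so spares the case split; the price you pay for not noticing this is only bookkeeping, not a gap.
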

\begin{proof}
The approximation of functions of corner layer type $\ut$ 
proceeds structurally along the same lines 
as in the case of the singularity functions in 
Lemma~\ref{lemma:asymptotic-case}. 
We distinguish between the elements touching $\bO$ and the remaining ones. 

\emph{$\Kt$ touches $\bO$:} 
Selecting $\widetilde \beta \in (\beta,1)$ we obtain by arguing
as in \eqref{eq:thm:singular-approx-17} 
\begin{align}
\nonumber 
 \|\ut - \tPiT \ut\|_{L^\infty(\Kt)} + 
\|\nabla( \ut - \tPiT \ut)\|_{L^2(\Kt)} 
&\lesssim h_{\Kt}^{1-\widetilde \beta} q^4 \| \rt^{\widetilde\beta} \nabla^2 \ut\|_{L^2(\Kt)} \\
\label{eq:thm:singular-approx-170} 
& \lesssim  q^4 (h_{\Kt}/\varepsilon)^{1-\beta} . 
\end{align}
Since $h_{\Kt} \lesssim \sigma^{n}$ for elements $\Kt$ touching $\bO$, 
their contributions lead to the term $q^4 \varepsilon^{\beta-1} \sigma^{n(1-\beta)}$.  

\emph{$K$ does not touch $\bO$:} We distinguish between triangular and rectangular
elements. 

\emph{Step~{1}: $\Kt$ is a triangular element:} As in the case of the approximation 
in Lemma~\ref{lemma:asymptotic-case}, we get from 
Lemma~\ref{lemma:hat_Pi_infty}, (\ref{item:lemma:hat_Pi_infty-iii}) 
and scaling that (for suitably adjusted $C$, $\alpha$) 
\begin{align} 
\label{eq:thm:singular-approx-190} 
 \|\ut - \tPiT \ut\|_{L^{\infty}(\Kt)}  + 
 \|\nabla( \ut - \tPiT \ut)\|_{L^{2}(\Kt)}  
\leq C (h_{\Kt}/\varepsilon)^{1-\beta} e^{-b q} e^{-\alpha h_{\Kt}/\varepsilon}. 
\end{align}
\emph{Step~{2}: $\Kt$ is a rectangular element:} We recall $h_{\Kt,\yt}  \leq h_{\Kt,\xt} \leq 1$. 
By Lemma~\ref{lemma:properties-mesh-patches}, (\ref{item:lemma:properties-mesh-patches-iv}) 
we have $\rt(\cdot)\sim h_{\Kt,\xt}$ on $\Kt$. 
As in the case of Lemma~\ref{lemma:asymptotic-case} we observe for the pull-back to the
reference element $\Kh$
\begin{equation*}
\forall (m,n) \in {\mathbb N}_0^2\colon\;\;
\|\partial_\xh^m \partial_\yh^n \widehat u\|_{L^\infty(\widehat K)} 
\leq C (h_{\Kt,\xt}/\varepsilon)^{1-\beta} e^{-\alpha h_{\Kt,\xt}/\varepsilon} 
\gamma^{n+m} n! m! h_{\Kt,\xt}^{m-(m+n)} h_{\Kt,\yt}^n . 
\end{equation*}
Using  
Lemma~\ref{lemma:hat_Pi_1_infty}, (\ref{item:lemma:hat_Pi_1_infty-ii}) 
(with $\varepsilon_x = \varepsilon_y = 1$ and $h_y = h_{\Kt,\yt}/h_{\Kt,\xt}$, $h_x = 1$ there), we arrive at 
\begin{align} 
\label{eq:thm:singular-approx-195}
 \|\widehat u - \PiGL \widehat u
             \|_{L^{\infty}(\widehat K)} 
& \leq C (h_{\Kt}/\varepsilon)^{1-\beta} e^{-b q} e^{-\alpha h_{\Kt,\xt}/\varepsilon}, \\
\label{eq:thm:singular-approx-200}
 \|\partial_\xh (\widehat u - \PiGL \widehat u)
             \|_{L^{\infty}(\widehat K)} 
& \leq C (h_{\Kt}/\varepsilon)^{1-\beta} e^{-b q} e^{-\alpha h_{\Kt,\xt}/\varepsilon}, \\
\label{eq:thm:singular-approx-210}
 \|\partial_\yh (\widehat u - \PiGL \widehat u)
             \|_{L^{\infty}(\widehat K)} 
& \leq C (h_{\Kt}/\varepsilon)^{1-\beta} \frac{h_{\Kt,\yt}}{h_{\Kt,\xt}} 
e^{-b q} e^{-\alpha h_{\Kt,\xt}/\varepsilon}. 
\end{align}
\emph{Step~{3} ($L^\infty$-bound):}
Since $\sup_{t > 0} t^{1-\beta} e^{-t} < \infty$, the $L^\infty$-estimates
follow easily from 
(\ref{eq:thm:singular-approx-170}), 
(\ref{eq:thm:singular-approx-190})
(\ref{eq:thm:singular-approx-195}).

\emph{Step 4 (energy norm estimate):}  
Proceeding as in Step~3 of the proof of Lemma~\ref{lemma:asymptotic-case}
we set $e_{\Kt}:= \ut - \widetilde \Pi_q \ut$ and get, using
$h_\Kt \lesssim \varepsilon$ for the elements abutting on $\bO$: 
\begin{align*}
& \sum_{\Kt\colon \Kt \text{ abuts on $\bO$}} 
\|e_\Kt\|^2_{L^2(\Kt)} + \varepsilon^2 \|\nabla e_\Kt\|^2_{L^2(\Kt)}  
\\
& 
\stackrel{\eqref{eq:thm:singular-approx-170}}{\lesssim }
q^8 \sum_{\Kt\colon \Kt \text{ abuts on $\bO$}} (h_{\Kt}^2+\varepsilon^2) (h_{\Kt}/\varepsilon)^{2(1-\beta)} 
\lesssim q^8 \varepsilon^{2\beta} \sigma^{2 n (1-\beta)}. 
\end{align*}
For the remaining elements, we consider the triangular elements and the 
rectangular ones. In both cases, we employ the simple observation 
\begin{equation}
\label{eq:thm:singular-approx-211}
\|e_{\Kt}\|_{L^2(\Kt)} \lesssim 
h_\Kt \|e_\Kt\|_{L^\infty(\Kt)} = 
\varepsilon \frac{h_\Kt}{\varepsilon} \|e_\Kt\|_{L^\infty(\Kt)} . 
\end{equation}
The sum over all triangles, collected
in ${\Tref}^\triangle$, yields by combining 
\eqref{eq:thm:singular-approx-190} and 
(\ref{eq:thm:singular-approx-211}) with 
Lemma~\ref{lemma:properties-mesh-patches}, 
(\ref{item:lemma:properties-mesh-patches-vii})
$$
\sum_{\Kt \in {\Tref}^\triangle} 
\|e_\Kt\|^2_{L^2(\Kt)} + \varepsilon^2 \|e_{\Kt}\|^2_{H^1(\Kt)} 
\lesssim \varepsilon^2 e^{-2b q}. 
$$
Likewise, the sum over all rectangular elements, collected in ${\Tref}^\square$, 
yields by combining 
Lemma~\ref{lemma:properties-mesh-patches}, 
(\ref{item:lemma:properties-mesh-patches-viii})
with 
\eqref{eq:thm:singular-approx-195}, 
\eqref{eq:thm:singular-approx-211} for the $L^2$-part and 
with 
\eqref{eq:thm:singular-approx-200}, 
\eqref{eq:thm:singular-approx-210}
for the $H^1$-part 
$$
\sum_{\Kt \in {\Tref}^\square} 
\|e_\Kt\|^2_{L^2(\Kt)} + \varepsilon^2 \|e_{\Kt}\|^2_{H^1(\Kt)} 
\lesssim \varepsilon^2 e^{-2b q}. 
$$
This concludes the proof. 
\end{proof}
\section{$hp$-FE approximation of singularly perturbed problems 
                                        on geometric boundary layer meshes}
\label{sec:sing-approx-geo-mesh}
\begin{figure}
\begin{center}
\begin{overpic}[width=0.35\textwidth]{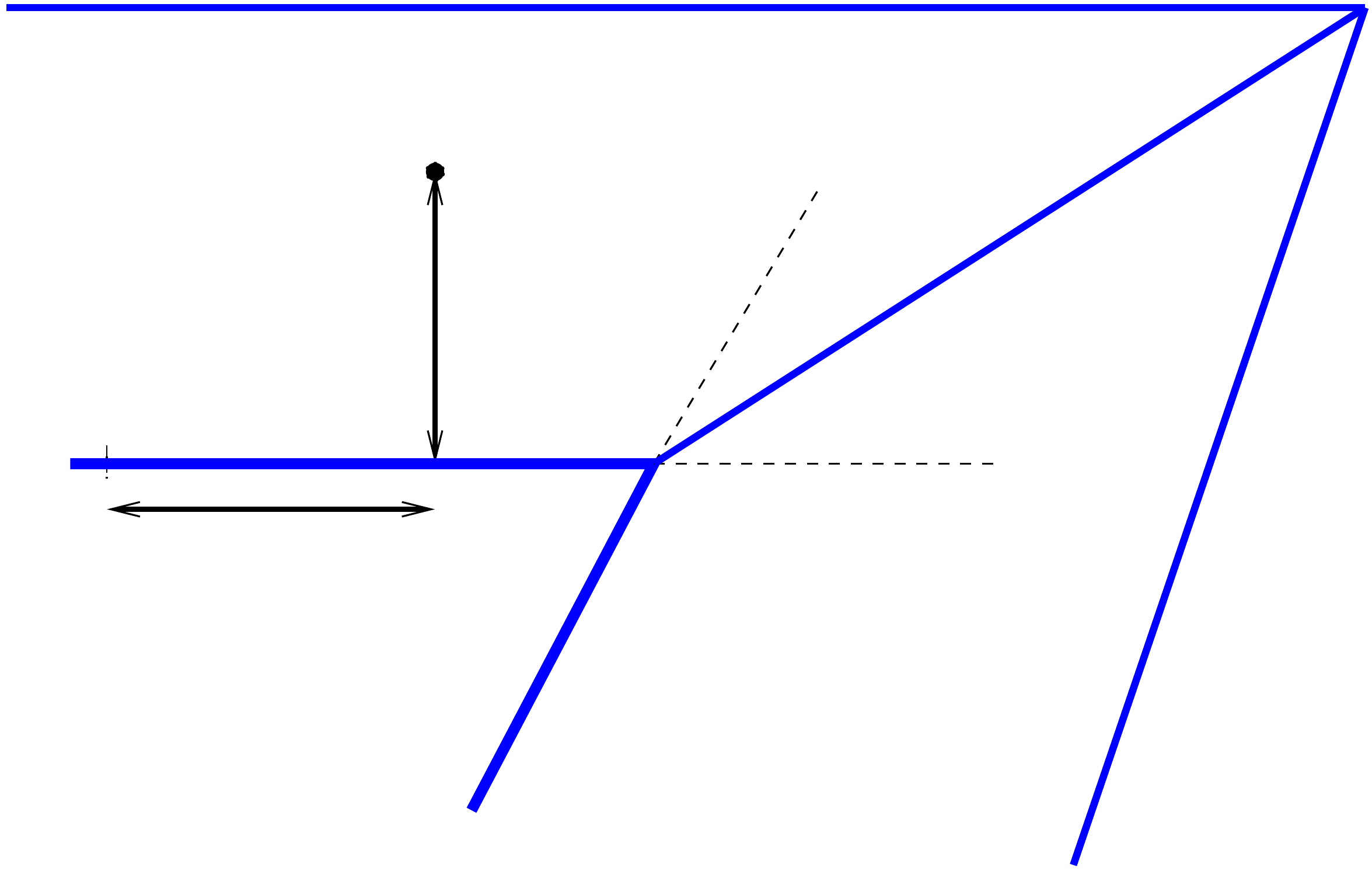}
\put(25,55){$\bx = (x,y)$}
\put(35,40){$\rho_j$}
\put(20,15){$\theta_j$}
\put(45,20){$\bA_j$}
\put(10,40){$\Omega_j$}
\put(60,10){$\Omega_{j+1}$}
\end{overpic}
\hfill 
\begin{overpic}[width=0.5\textwidth]{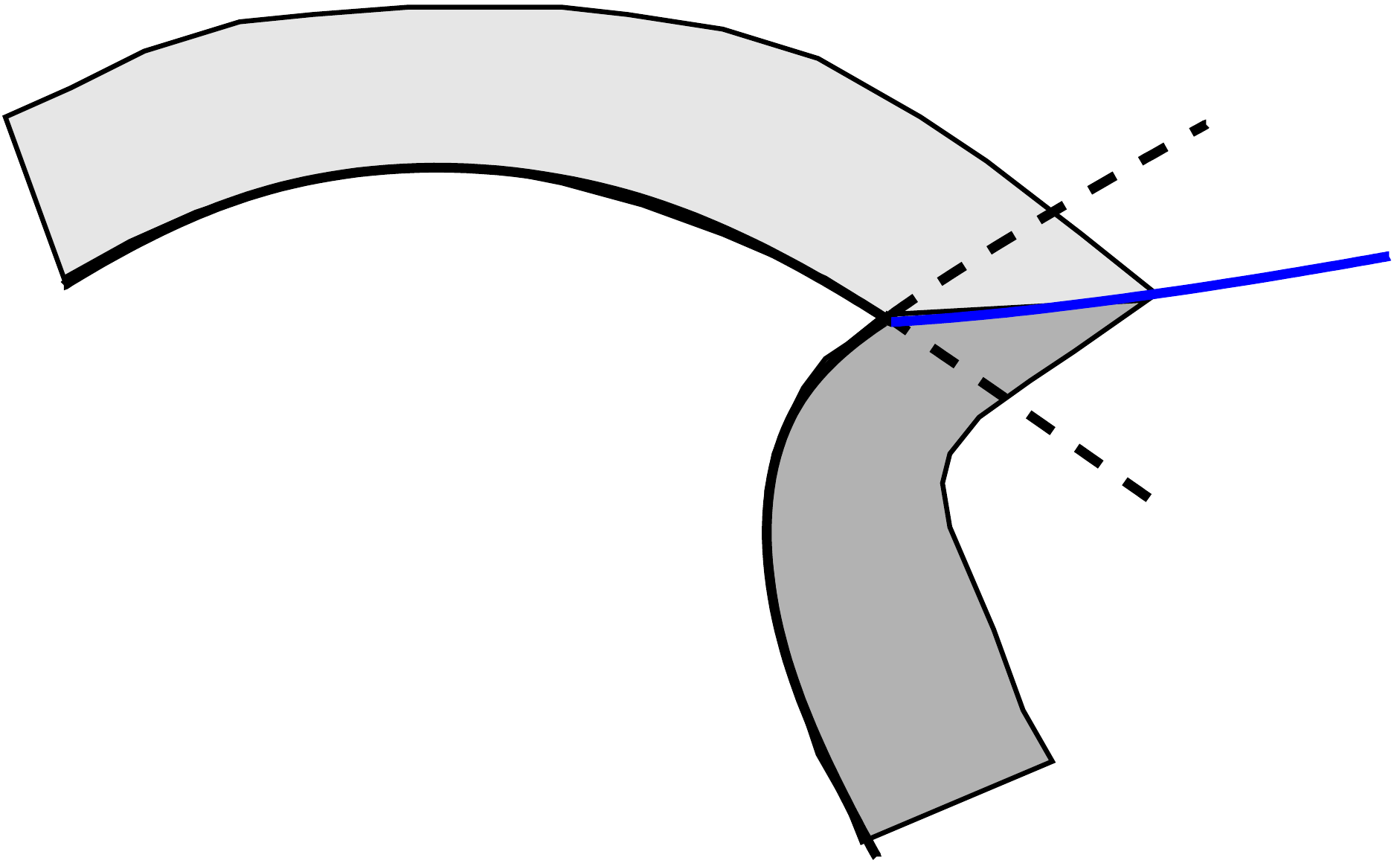}
\put(30,45){$\Gamma_j$}
\put(30,55){$\Omega_j$}
\put(53,38){$\mathbf{A}_j$}
\put(45,10){$\Gamma_{j+1}$}
\put(60,10){$\Omega_{j+1}$}
\put(80,20){$\widetilde \Gamma_{j}^\prime$}
\put(78,55){$\widetilde \Gamma_{j+1}^\prime$}
\put(92,38){$\Gamma_j^\prime$}
\end{overpic}
\end{center}
\caption{\label{fig:bdyfitted-coord} Left: boundary fitted coordinates $\psi_j: (\rho_j, \theta_j) \mapsto (x,y)$. Right: typical situation at a reentrant corner: boundary fitted coordinates $(\rho_j,\theta_j)$ and $(\rho_{j+1},\theta_{j+1})$ are valid in the regions $\Omega_j$, $\Omega_{j+1}$, respectively. 
$\widetilde\Gamma_j$ and $\widetilde\Gamma_{j+1}$ are analytic continuations of $\Gamma_j$, $\Gamma_{j+1}$. 
The analytic arc $\Gamma_j^\prime$ is such that the angles $\angle(\Gamma_j^\prime,\Gamma_j)$ and $\angle(\Gamma_{j+1},\Gamma_j^\prime)$ are both less than $\pi$.}
\end{figure}
\begin{figure}
\begin{center}
\begin{overpic}[width=0.35\textwidth]{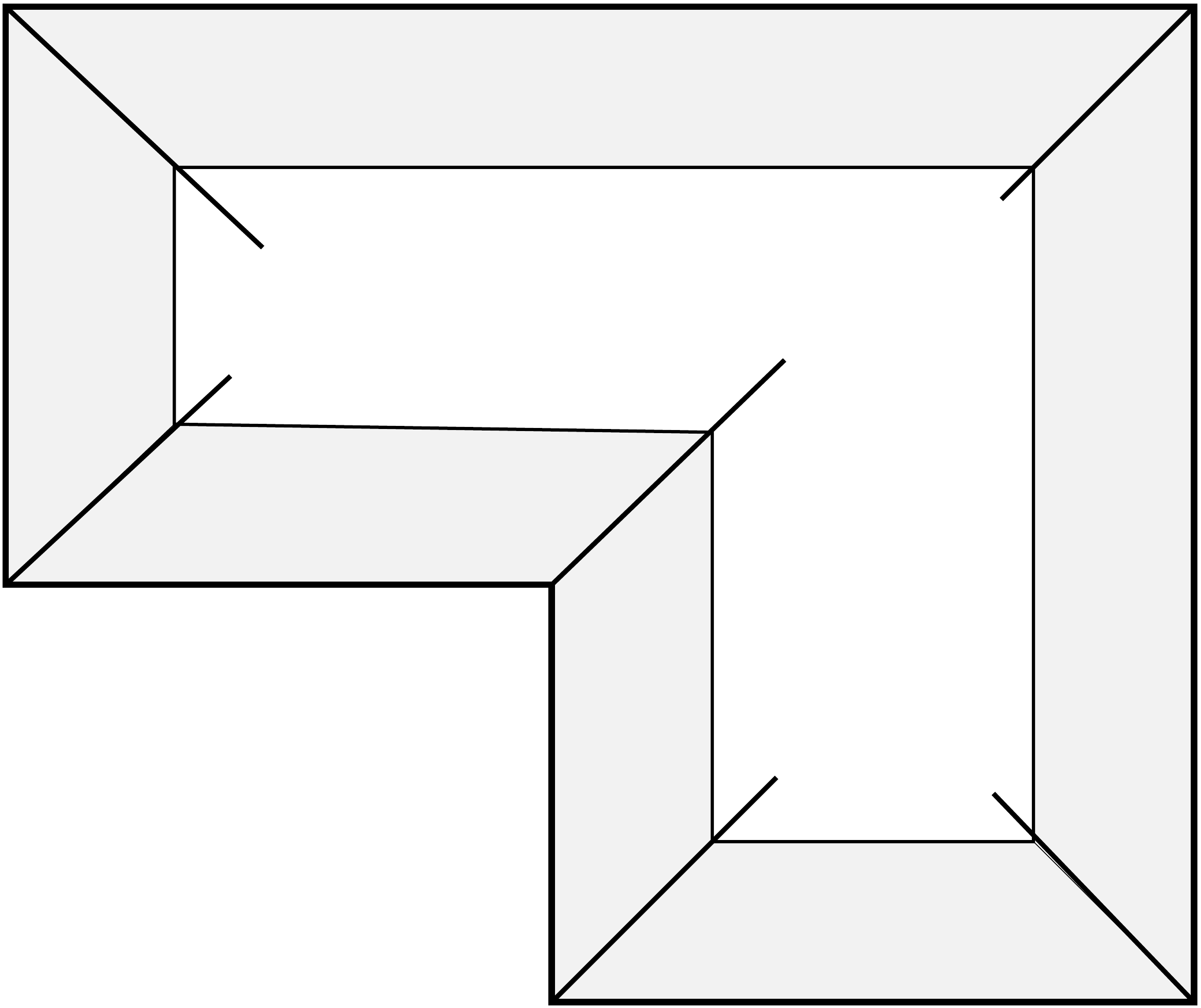}
\put(-20,29){$A_6 = A_0$}
\put(34,29){$\bA_1$}
\put(40,-3){$\bA_2$}
\put(105,0){$\bA_3$}
\put(105,85){$\bA_4$}
\put(-2,85){$\bA_5$}
\put(20,29){$\Gamma_1$}
\put(37,15){$\Gamma_2$}
\put(70,02){$\Gamma_3$}
\put(105,40){$\Gamma_4$}
\put(50,85){$\Gamma_5$}
\put(-6,50){\rotatebox{90}{$\Gamma_6 = \Gamma_0$}}
\put(25,40){$\Omega_1$}
\put(50,20){$\Omega_2$}
\put(70,10){$\Omega_3$}
\put(90,30){$\Omega_4$}
\put(60,75){$\Omega_5$}
\put(5,60){$\Omega_0 = \Omega_6$}
\end{overpic}
\end{center}
\caption{\label{fig:notation-1} The subdomains $\Omega_j$ 
          on which the boundary layer expansion $u^{\BL}_\varepsilon$ is defined 
          in terms of boundary fitted coordinates $(\rho_j,\theta_j)$.}
\end{figure}
%
The principal result of the present paper is a robust, exponential approximation 
result for solutions of the singular perturbation problem 
\eqref{eq:sg-per}, \eqref{eq:sg-per-asmp} 
in curvilinear polygonal domains from spaces based on geometric boundary 
layer meshes that are able to resolve the length scales present in the problem. 
The meshes are independent of $\varepsilon$ but subject to the (weak) 
scale resolution condition (\ref{eq:L-eps-resolution-foo}).

\begin{theorem}
\label{thm:singular-approx} 
Let the Lipschitz domain $\Omega \subset {\mathbb R}^2$ 
be a curvilinear polygon with $J$ vertices as described in
Section~\ref{sec:GeoPrel}. 
Let $A$, $c$, $f$ satisfy (\ref{eq:sg-per-asmp}). 
Fix $c_1 > 0$. 
Let $\Tg$ be a geometric boundary layer mesh in sense of Definition~\ref{def:bdylayer-mesh}. 

Then 
there are constants $C$, $b >0 $, $\beta \in [0,1)$ 
depending solely on the data $A$, $c$, $f$, $\Omega$, 
on the parameter $c_1$, on the (fixed) macro-triangulation ${\mathcal T}^\M$, 
and on $\sigma \in (0,1)$ such that the following holds: 
If $\varepsilon \in (0,1]$ and $L$ satisfy
the scale resolution condition 
\begin{equation}
\label{eq:L-eps-resolution-foo}
\frac{\sigma^L}{\varepsilon} \leq c_1, 
\end{equation}
then 
for every $q$, $n\in \mathbb{N}$ 
the solution $u_\varepsilon \in H^1_0(\Omega)$ of \eqref{eq:sg-per} 
can be approximated from $S^{q}_0(\Omega,\Tg)$ such that 
\begin{align}
\label{eq:thm:singular-approx-10}
& \inf_{v \in S^{q}_0(\Omega,\Tg)}
\|u_\varepsilon - v\|_{\varepsilon,\Omega} \leq C q^{9} 
\left[\varepsilon^\beta \sigma^{(1-\beta)n}  + e^{-b q}\right], \\ 
N &:= \operatorname{dim} S^{q}_0(\Omega,\Tg)
\leq C \bigl( L^2 q^2 \operatorname{card} {\mathcal T}^\M + n q^2 J \bigr). 
\end{align}
\end{theorem}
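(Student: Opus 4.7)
The plan is to combine the parameter-explicit analytic regularity decomposition of $u_\varepsilon$ from \cite{melenk02} with the patchwise approximation lemmas of Section~\ref{sec:approx-reference-patches}. Specifically, the regularity theory decomposes $u_\varepsilon = u^S_\varepsilon + u^{\BL}_\varepsilon + u^{\chi}_\varepsilon + u^{\CL}_\varepsilon$, where $u^S$ is a smooth (analytic, $\varepsilon$-uniform) remainder, $u^{\BL}$ is a boundary layer expressed in boundary-fitted coordinates $(\rho_j,\theta_j)$ near each analytic arc $\Gamma_j$ (cf.\ Fig.~\ref{fig:bdyfitted-coord} and Fig.~\ref{fig:notation-1}), $u^{\chi}$ collects the corner singularity functions at each vertex $\bA_j$ with $\beta\in[0,1)$ controlled by the interior angles, and $u^{\CL}$ collects corner-layer contributions combining exponential decay in $\widetilde r/\varepsilon$ with algebraic $\widetilde r^{1-\beta}$ behaviour. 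Each summand satisfies derivative bounds that match exactly one of the hypotheses of Lemmas~\ref{lemma:asymptotic-case}, \ref{lemma:bdy-layer-approx}, \ref{lemma:corner-layer-approx}.

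First I would pull back the decomposition to each reference patch via the analytic patch maps $F_{K^\M}$, using the analytic change-of-variables results of Appendix~\ref{AppA:AnChVar} to transfer the $(\rho_j,\theta_j)$-weighted derivative bounds to $(\widetilde x,\widetilde y)$-weighted bounds on $\widetilde S$. Condition~\ref{item:def-geo-mesh-9} of Definition~\ref{def:bdylayer-mesh} is precisely the geometric hypothesis needed so that at each vertex some macro-edge splits the interior angle into two subangles $<\pi$; this ensures the regularity theory of \cite{melenk02} is applicable on each of the two abutting macro patches. Under the scale resolution condition \eqref{eq:L-eps-resolution-foo}, one has $\sigma^L \leq c_1\varepsilon$, and since Def.~\ref{def:admissible-patterns} imposes $n\geq L$ one also has $\sigma^n \leq c_1\varepsilon$, which are exactly \eqref{eq:L-eps-resolution} and \eqref{eq:L-eps-resolution-n} required to invoke the layer approximation lemmas.

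Next I would define the global interpolant $\Pi_q u_\varepsilon \in S^{q}_0(\Omega,\Tg)$ patchwise through the reference interpolant $\widetilde\Pi_q$ of \eqref{eq:tildePiq} applied separately to each regularity component. Since $\widetilde\Pi_q$ reduces on each edge of each element to the univariate Gauss-Lobatto interpolant (Lemma~\ref{lemma:hat_Pi_infty}\ref{item:lemma:hat_Pi_infty--1} and Lemma~\ref{lemma:hat_Pi_1_infty}\ref{item:lemma:hat_Pi_1_infty-0}), and since the macro-triangulation and the admissible patterns are edge-compatible, the resulting piecewise polynomial is $H^1_0(\Omega)$-conforming. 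On each macro patch I then case-split over the catalog $\mathfrak{P}$: the smooth part $u^S$ falls under Lemma~\ref{lemma:asymptotic-case}\ref{item:lemma:asymptotic-case-ii}; pulled-back boundary layers on BL, mixed, and tensor patches fall under Lemma~\ref{lemma:bdy-layer-approx}; corner singularities on corner, mixed, and tensor patches fall under Lemma~\ref{lemma:asymptotic-case}\ref{item:lemma:asymptotic-case-i} with exponent $\beta\in[0,1)$ dictated by the vertex; and corner layers fall under Lemma~\ref{lemma:corner-layer-approx}. Summing the $L^2$- and $\varepsilon\|\nabla\cdot\|_{L^2}$-bounds across patches (each consisting of finitely many macro-patches) and using $\|v\|_{\varepsilon,\Omega}^2 = \varepsilon^2\|\nabla v\|_{L^2}^2+\|v\|_{L^2}^2$ yields the target bound $C q^{9}\bigl[\varepsilon^\beta \sigma^{(1-\beta)n}+e^{-bq}\bigr]$, where the algebraic factor $q^9$ comes from the vertex-abutting triangles in Lemma~\ref{lemma:asymptotic-case} and absorbs the smaller factors $q^4$, $q^2$ from the other lemmas. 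The dimension bound is an elementary count: each BL or tensor patch contributes $O(L q^2)$ or $O(L^2 q^2)$ degrees of freedom and each corner or mixed patch contributes $O(n q^2)$, giving $N \lesssim L^2 q^2 \operatorname{card}\calT^\M + n q^2 J$.

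The main obstacle is the careful bookkeeping required to align the \cite{melenk02} decomposition---whose components $u^{\BL}$ and $u^{\CL}$ are defined in boundary-fitted sectorial coordinates $(\rho_j,\theta_j)$ valid only on subdomains $\Omega_j$, cf.\ Fig.~\ref{fig:notation-1}---with the patchwise, $(\widetilde x,\widetilde y)$-based structure of $\Tg$, particularly at reentrant corners where two boundary-fitted frames must be stitched across the auxiliary arc $\Gamma_j^\prime$ of Fig.~\ref{fig:bdyfitted-coord}. This matching is what forces the technical condition~\ref{item:def-geo-mesh-9} of Def.~\ref{def:bdylayer-mesh} and drives the choice of splitting each $\Omega_j$ into one mixed, one tensor, and several BL/corner macro-patches. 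The remaining ingredient---absorbing algebraic prefactors $\varepsilon^{-k}$ into $e^{-bq}$ via $q\varepsilon \gtrsim 1$ (which is ensured by \eqref{eq:L-eps-resolution-foo} once $q$ is large enough)---is the same mechanism already used in the proofs of Lemmas~\ref{lemma:asymptotic-case}--\ref{lemma:corner-layer-approx} and requires only a final adjustment of the exponent $b$.
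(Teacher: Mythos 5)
Your proposal has the right ingredients (patchwise interpolation via $\widetilde\Pi_q$, Lemmas~\ref{lemma:asymptotic-case}--\ref{lemma:corner-layer-approx}, the change-of-variables results of Appendix~\ref{AppA:AnChVar}, Condition~\ref{item:def-geo-mesh-9}), but it misses the central structural step of the paper's argument: the dichotomy between the \emph{asymptotic regime} $q\varepsilon \ge \kappa$ and the \emph{preasymptotic regime} $q\varepsilon \le \kappa$. These two regimes use genuinely different regularity results from \cite{melenk02} and different approximation lemmas. In the asymptotic case the paper applies \cite[Thm.~2.3.1]{melenk02}, which gives a single weighted derivative bound for $u_\varepsilon$ itself (capturing the corner singularity through $r_j(\cdot)^{-p}$ weights), and then invokes Lemma~\ref{lemma:asymptotic-case}; that lemma carries the constraint~\eqref{eq:lemma:asymptotic-case-20}, $q\varepsilon\ge\kappa$, which is a hypothesis on $q$ and $\varepsilon$, \emph{not} a consequence of the scale resolution condition~\eqref{eq:L-eps-resolution-foo}. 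Your claim that $q\varepsilon\gtrsim 1$ ``is ensured by~\eqref{eq:L-eps-resolution-foo} once $q$ is large enough'' is incorrect: the theorem must hold for every $q\in\mathbb{N}$, including small $q$ with $q\varepsilon<\kappa$, and in that range Lemma~\ref{lemma:asymptotic-case} is inapplicable. Only in the preasymptotic case does the paper turn to the decomposition $u_\varepsilon = w_\varepsilon + \chi^{\BL} u^{\BL}_\varepsilon + \chi^{\CL} u^{\CL}_\varepsilon + r_\varepsilon$ of \cite[Thm.~2.3.4]{melenk02} and apply Lemmas~\ref{lemma:asymptotic-case}\,(\ref{item:lemma:asymptotic-case-ii}), \ref{lemma:bdy-layer-approx}, and \ref{lemma:corner-layer-approx}.

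A second, related discrepancy: you posit a four-part decomposition $u_\varepsilon = u^S + u^{\BL} + u^{\chi} + u^{\CL}$ with a separate ``corner singularity'' component $u^{\chi}$. The decomposition used in the preasymptotic case has no such component; its fourth piece is the exponentially small remainder $r_\varepsilon$, which is simply approximated by zero (using $r_\varepsilon|_{\partial\Omega}=0$ and $\|r_\varepsilon\|_{H^1}\lesssim e^{-b/\varepsilon}$). The corner-singular $r^{-\beta}$ behavior is not a separate layer-decomposition summand; it appears either through the direct weighted bound for $u_\varepsilon$ in the asymptotic regime (Thm.~2.3.1) or implicitly inside the corner-layer term $u^{\CL}_\varepsilon$ in the preasymptotic regime. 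Your plan to apply Lemma~\ref{lemma:asymptotic-case}\,(\ref{item:lemma:asymptotic-case-i}) to a fictitious $u^{\chi}$ in all regimes would require~\eqref{eq:lemma:asymptotic-case-20}, which fails preasymptotically, so the case-split is not a technical nicety but is load-bearing. The rest of your outline — edge compatibility via the Gauss-Lobatto traces, pulling back through $F_{K^\M}$, the role of Condition~\ref{item:def-geo-mesh-9} at reentrant corners, and the dimension count — does track the paper.
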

\begin{proof}
Before proving the result, let us comment on the scale resolution 
\eqref{eq:L-eps-resolution-foo} and its relation to previous scale 
resolution conditions 
\eqref{eq:lemma:asymptotic-case-20}, 
\eqref{eq:L-eps-resolution-n-foo}, 
and 
\eqref{eq:L-eps-resolution}. 
Condition~\eqref{eq:L-eps-resolution-foo} ensures that $L$ layers of 
anisotropic refinement towards the boundary are performed, which is the 
condition~\eqref{eq:L-eps-resolution} needed to resolve functions of 
boundary layer type. Since $n \ge L$ (by Def.~\ref{def:bdylayer-mesh}), 
condition~\eqref{eq:L-eps-resolution-foo} also enforces the 
condition \eqref{eq:L-eps-resolution-n-foo}, which provides the 
approximation of corner layer functions. Finally, the situation 
\eqref{eq:lemma:asymptotic-case-20} is of a different nature as
in that case, the polynomial degree is so large that already very coarse
meshes can resolve the boundary layers. 

We employ the analytic, parametric regularity theory 
for the solution $u_\varepsilon$ 
presented in \cite[Thms.~{2.3.1}, {2.3.4}]{melenk02}. 
The infimum in \eqref{eq:thm:singular-approx-10} is estimated with the aid
of the interpolation operator $\Pi_q$ that is defined elementwise by 
\begin{align*}
(\Pi_q u)|_K \circ F_K & := 
\begin{cases}
\PiT (u \circ F_K) & \mbox{ if $K$ is a triangle} \\
\PiGL (u \circ F_K) & \mbox{ if $K$ is a rectangle}.
\end{cases}
\end{align*}
Here, the operator $\PiT$ is defined in 
Lemma~\ref{lemma:hat_Pi_infty} and the operator $\PiGL$ in 
Lemma~\ref{lemma:hat_Pi_1_infty}. Since $\PiT$ and 
$\PiGL$ reduce to the Gauss-Lobatto interpolation operator
on the edges of the reference element, 
the operator $\Pi_q$ indeed maps into $S^q_0(\Omega,\Tg)$.  
We recall that the element maps $F_K$ have the form 
$$
F_K = F_{K^\M} \circ A_K, 
$$
where 
$A_K:\Kh \rightarrow \Kt:= A_K(\Kh) = 
F_{K^\M}^{-1}(K)\subset \widetilde S$ is an affine bijection. 
Indeed, for triangular elements it is clear 
that $A_K$ is affine and for rectangular elements, this follows
from the special form of the reference patches (cf.\ also 
Lemma~\ref{lemma:properties-mesh-patches}, 
(\ref{item:lemma:properties-mesh-patches-i})). 

The notation $\widehat u$ denotes the pull-back of $u$ 
to the reference element, i.e., 
$\widehat u := u|_K \circ F_K$ 
whereas 
$\widetilde u:= (u \circ F_{K^\M})|_{\Kt} = \widehat u \circ A_K^{-1}$ 
is the corresponding function on $\Kt$.
We recall the notation $\widetilde \Pi_q$ from (\ref{eq:tildePiq}) and note 
that on a macro-element $K^\M$ we have 
\begin{equation*}
(\Pi_q u)\circ F_{K^\M} = \widetilde \Pi_q \ut. 
\end{equation*}
For $k \in {\mathbb N}_0$ we have for all elements $K \subset K^\M$
with $\Kt = F_{K^\M}^{-1}(K)$ 
\begin{subequations}
\label{eq:norm-equivalence}
\begin{align}
\forall v \in H^k(K)\colon\;\;
\|v \circ F_{K^\M} \|_{H^k(\Kt)} \sim \|v\|_{H^k(K)} , \\
\forall v \in W^{k,\infty}(K) \colon\;\; 
\|v \circ F_{K^\M} \|_{W^{k,\infty}(\Kt)} \sim \|v\|_{W^{k,\infty}(K)}, 
\end{align}
\end{subequations}
where in both cases the constants implied in $\sim$ depend solely on $k$ and
the macro-element $K^\M$. The equivalences (\ref{eq:norm-equivalence}) 
show that 
the approximation error $v - \Pi_q v$ on $K$ is equivalent to the corresponding
error $\widetilde{v} - \widetilde\Pi_q \widetilde{v}$ on $\Kt$. 

The approximation theory distinguishes between the ``asymptotic case''
$q \varepsilon \ge \kappa$ of large polynomial degree $q$ 
and the ``preasymptotic case''
$q \varepsilon \leq \kappa$, where the parameter $\kappa > 0$ 
(depending only on $A$, $c$, $f$, $\Omega$, the macro-triangulation, 
and $\sigma$)
is of size $O(1)$ and will be determined in the course of the analysis 
of the ``asymptotic case'' in Step~{I}. 

\emph{Step I: Asymptotic case $q \varepsilon \ge \kappa$.} 
We consider mesh patches $K^\M$ that abut on a vertex $\bA_j$ and those 
with a positive distance from the vertices separately in Steps~{I.1} and {I.2}. 

\emph{Step I.1: $K^\M$ abuts on a vertex $\bA_j$:} 
The regularity of \cite[Thm.~{2.3.1}]{melenk02} asserts the existence
of $C$, $\gamma > 0$, $\beta_j  \in [0,1)$ such that with 
$r_j(\cdot) := \operatorname{dist}(\cdot,\bA_j)$, there holds 
for every $p \in {\mathbb N}_0$ and for every $0<\varepsilon \leq 1$ 
\begin{equation}
|\nabla^p (u_\varepsilon(\cdot) - u_\epsilon(\bA_j))| 
\leq 
C \gamma^p \varepsilon^{-1} \min\{1,r_j(\cdot)/\varepsilon\}^{1-\beta_j} (r_j(\cdot))^{-p} 
\max\{p+1,r_j(\cdot)/\varepsilon\}^{p+1} . 
\end{equation}
Recall from (\ref{eq:def-rt}) that $\rt(\cdot) = \operatorname{dist}(\cdot,\bO)$. 
Set $\ut_\varepsilon:= u_\varepsilon \circ F_{K^\M}$. Note $F_{K^\M}(\bO) = \bA_j$ and 
$\rt(\boxt) \sim r_j(F_{K^\M}(\boxt))$. 
The analyticity of $F_{K^\M}$ and Lemma~\ref{L:cornerLayTrafo} imply, 
for suitably modified constants $C$, $\gamma$ independent of $\varepsilon\in (0,1]$, 
for every $p\in \mathbb{N}_0$ holds on $\Kt$
\begin{equation}
|\nabla^p (\ut_\varepsilon(\cdot) - \ut(\bO))| 
\leq C \gamma^p \varepsilon^{-1} \min\{1,\rt/\varepsilon\}^{1-\beta_j} (\rt(\cdot))^{-p} 
\max\{p+1,\rt(\cdot)/\varepsilon\}^{p+1} .
\end{equation}
Lemma~\ref{lemma:asymptotic-case} then yields 
$$
\|\ut_\varepsilon - \widetilde \Pi_q \ut_\varepsilon\|_{L^\infty(K^\M)} + 
\|\nabla( \ut_\varepsilon - \widetilde \Pi_q \ut_\varepsilon)\|_{L^\infty(K^\M)} 
\leq C q^9 \left( \sigma^{(1-\beta_j) n} + e^{-bq}\right) 
$$
provided that $\kappa$ is chosen sufficiently large (depending on $\gamma$). 

\emph{Step I.2: $K^\M$ does not abut on a vertex $\bA_j$:} 
\cite[Thm.~{2.3.1}]{melenk02} asserts 
\begin{equation}
\label{eq:asymptotic-case-interior-regularity} 
\forall \bx \in K^\M \quad \forall p \in {\mathbb N}_0 \colon\;\;
|\nabla^p u(\bx)| \leq C \gamma^p \max\{p+1,\varepsilon^{-1}\}^{p+2}  
\end{equation}
for constants $C$, $\gamma > 0$ independent of $\varepsilon \in (0,1]$. 
Since $K^\M$ is a trivial patch, it consists of a single (curvilinear) quadrilateral. 
The analyticity of $F_{K^\M} = F_K$ and Lemma~\ref{L:cornerLayTrafo} imply, for suitably
modified $C$, $\gamma$ independent of $\varepsilon\in (0,1]$, that
\begin{equation}
\label{eq:asymptotic-case-interior-regularity-ref-square} 
\forall \widehat\bx \in \widehat{S} \quad \forall p \in {\mathbb N}_0 \colon\;\;
|\nabla^p \uh (\widehat\bx)| \leq C \gamma^p \max\{p+1,\varepsilon^{-1}\}^{p+2}  
\;.
\end{equation}
Lemma~\ref{lemma:hat_Pi_1_infty} then implies 
that there are $C$, $b>0$ such that for sufficiently large, fixed $\kappa$
and for every $\varepsilon \in (0,1]$ and every $q\in \mathbb{N}$ holds
$$
\|\uh- \widehat \Pi_q \uh\|_{L^\infty(K^\M)} + 
\|\nabla( \uh - \widehat \Pi_q \uh)\|_{L^\infty(\widehat S)} 
\leq C e^{-bq}
\;.
$$

\emph{Step~{I.3}:} 
Combining the approximation results of Steps~{I.1}, {I.2} for the finitely many patches 
leads to the desired estimate (\ref{eq:thm:singular-approx-10}). 

\emph{Step II: Preasymptotic case $q \varepsilon \leq \kappa$.} 
The parameter $\kappa$ has been fixed in Step~{I} through the appeal
to Lemmas~\ref{lemma:asymptotic-case} and \ref{lemma:hat_Pi_1_infty}.  
In the regime $q \varepsilon \leq \kappa$, 
we employ the regularity theory of \cite[Thm.~{2.3.4}]{melenk02}, which 
furnishes the decomposition
$u_\varepsilon = w_\varepsilon + \chi^{\BL} u^{\BL}_\varepsilon + \chi^{\CL} u^{\CL}_\varepsilon + r_\varepsilon$
into a smooth part $w_\varepsilon$, 
a boundary layer part $u^{\BL}_\varepsilon$, 
a corner layer part $u^{\CL}_\varepsilon$, 
and a small remainder $r_\varepsilon$; the functions 
$\chi^{\BL}$, $\chi^{\CL}$ are suitable localizations near the boundary 
and the vertices of $\Omega$.  
We approximate each of these four contributions in turn. 

\emph{Step II.1: Approximation of $w_\varepsilon$.} 
By \cite[Thm.~{2.3.4}]{melenk02} the smooth part $w_\varepsilon$ 
is analytic on $\overline{\Omega}$ with constants independent of 
$\varepsilon$. Therefore, 
one can show $\|w_\varepsilon - \Pi_q w_\varepsilon \|_{W^{1,\infty}(\Omega)} \leq C e^{-bq}$
using similar techniques as in the asymptotic case above (essentially, setting $\varepsilon = 1$
there and ignoring the special treatment of the elements abutting on the vertices of $\Omega$). 

\emph{Step II.2: Approximation of $\chi^{\BL} u^{\BL}_\varepsilon$.} 

\emph{Step II.2.a: Regularity of $u^{\BL}_\varepsilon$:} 
The regularity of $u^{\BL}_\varepsilon$ in \cite[Thm.~{2.3.4}]{melenk02} 
is described in terms of boundary fitted coordinates (cf.\ Fig.~\ref{fig:bdyfitted-coord}). 
Associated
with each edge $\Gamma_j$ are fitted coordinates $(\rho_j,\theta_j)$, 
where $\rho_j$ is the distance from the analytic continuation 
$\widetilde \Gamma_j$ of the boundary arc $\Gamma_j$, 
and $\theta_j$ is a parametrization of $\Gamma_j$. 
The map $\psi_j: (\rho_j,\theta_j) \mapsto (x,y) \in \Omega$ is analytic
with an analytic inverse. An analytic arc $\Gamma_j^\prime$ 
emanates from each vertex $\bA_j$, which can be chosen arbitrarily 
but is assumed to be such that the angles between 
$\Gamma_j$ and $\Gamma_j^\prime$ and between 
$\Gamma_{j+1}$ and $\Gamma_j^\prime$ are both less than $\pi$. 
Condition~\ref{item:def-geo-mesh-9} of Definition~\ref{def:bdylayer-mesh}
ensures that $\Gamma_j^\prime$ can be chosen to be a meshline of 
a boundary layer mesh since it can be chosen as the image of an
edge of $\widetilde S$ or a diagonal of $\widetilde S$ under a patch
map.  

The regions $\Omega_j\subset \{\bx \in \Omega\,|\, 
\operatorname{dist}(\bx,\Gamma_j) <\delta \}$ for a sufficiently small $\delta$ 
are confined by the lines $\Gamma_j$, $\Gamma_j^\prime$, 
and $\Gamma_{j-1}^\prime$ as shown in Fig.~\ref{fig:notation-1}. 
By \cite[Thm.~{2.3.4}]{melenk02}, 
the function $u^{\BL}_\varepsilon$ is analytic on each $\Omega_j$ 
and satisfies there, for constants $C$, $\gamma$, $\alpha > 0$ independent
of $\varepsilon \in (0,1]$ and all $(m,n) \in {\mathbb N}_0$,
\begin{align}
\nonumber 
|\partial_{\rho_j}^n \partial_{\theta_j}^m u^{\BL}_\varepsilon \circ \psi_j(\rho_j,\theta_j)| 
& \stackrel{\text{\cite[Thm.~{2.3.4}]{melenk02}}}{\leq} 
C \varepsilon^{-n} \gamma^{n+m} m! e^{-\alpha \rho_j/\varepsilon}  \\ 
& \leq C \gamma^{n+m} \max\{n+m,\varepsilon^{-1}\}^{n+m} 
e^{-\alpha \rho_j/\varepsilon} 
\label{eq:bdy_regularity}
\;.
\end{align}
Finally, the cut-off function $\chi^{\BL}$ is supported by 
$\cup_j \overline{\Omega_j}$ and is identically $1$ near $\partial\Omega$. 

\emph{Step II.2.b: Approximation of $\chi^{\BL} u^{\BL}_\varepsilon$ far from $\partial\Omega$:} 
In the interest of simplicity of notation, we make the 
assumption that patches $K^\M$ touching $\partial\Omega$ are fully
contained in the tubular neighborhood $\cup_j \overline{\Omega_j}$
of $\partial\Omega$. Since patches $K^\M$ not touching $\partial\Omega$
have a positive distance from $\partial\Omega$, the function 
$\chi^{\BL} u^{\BL}_\varepsilon$ is exponentially small (in $1/\varepsilon$) 
there; in view of the stability (\ref{eq:tildePiq-stable}) 
(and thus the stability of $\Pi_q$) 
$\|\chi^{\BL} u^{\BL}_\varepsilon - \Pi_p (\chi^{\BL} u^{\BL}_\varepsilon)\|_{W^{1,\infty}(K)} \leq C e^{-b/\varepsilon}$ for $K \in K^\M$.  
Since $q/\kappa \leq 1/\varepsilon$ the error contribution of these
patches is controlled in the desired fashion. 

\emph{Step II.2.c: Approximation of $\chi^{\BL} u^{\BL}_\varepsilon$ 
near $\partial\Omega$:} 
Let $K^\M$ be a patch touching $\partial\Omega$. Consider, for a fixed $j$
the pull-back $F_{K^\M}^{-1} (K^\M \cap \Omega_j)$. By the assumptions
of the boundary layer mesh (Def.~\ref{def:bdylayer-mesh}) 
this pull back is either empty, the full square $\widetilde S$, 
half the square $\widetilde T = \{\boxt = (\xt,\yt)\,|\, 0 < \xt < 1, 0 < \yt < \xt\}$,
or the other half $\Tf= \{\boxt = (\xt,\yt)\,|\, 0 < \xt < 1, \xt < \yt < 1\}$. 

To fix ideas, let us assume that $K^\M$ is a mixed patch. 
The reference mixed patch restricted to $\widetilde T$ is the half-patch 
$\widetilde{\calT}^{\Mi,\text{\rm half},L,n}$ and its restriction to $\Tf$ 
is $\widetilde{\calT}^{\Co,\text{\rm half},\text{\rm flip},n}$. We approximate 
$(\chi^{\BL} u^{\BL}_\varepsilon)\circ F_{K^\M}$ 
on these two parts separately, starting with the approximation on 
$\widetilde T$. The assumptions on boundary layer meshes 
(Def.~\ref{def:bdylayer-mesh}) allow us to assume that 
$F_{K^\M} (\widetilde T) \subset \Omega_j$ for some $j$. 
We recall that $F_{K^\M}$ maps the edge $\{\yt = 0\}$ of $\widetilde T$ to 
(a subset of) $\partial\Omega$, which corresponds to $\rho_j = 0$ 
in the boundary fitted coordinates.  The shape-regularity of $F_{K^\M}$ 
implies that $\psi_j^{-1} \circ F_{K^\M}$ has the form 
\begin{equation}
\label{eq:coordinate-trafo}  
\widetilde T \ni (\xt,\yt) \mapsto (\rho_j,\theta_j) = (\yt \rho(\xt,\yt), \theta(\xt,\yt) )
\end{equation}
for a pair of functions $\rho$, $\theta$ with $\rho \ge \rho_0 > 0$. 
The analyticity
of $\psi_j^{-1}$ and $F_{K^\M}$ 
implies that $\rho$ and $\theta$ are in fact 
analytic on $\overline{\widetilde T}$. 
Hence, the transformed function 
\begin{equation}\label{eq:Trnsueps-alt} 
\widetilde u^{\BL}_\varepsilon
:= 
u^{\BL}_\varepsilon \circ F_{K^\M} 
= 
u^{\BL}_\varepsilon \circ \psi_j \circ (\psi_j^{-1} \circ F_{K^\M})
\end{equation}
admits by Lemma~\ref{L:bdyLayTrafo} and (\ref{eq:bdy_regularity})
the analytic regularity 
\begin{equation} 
\label{eq:utbl-regularity}
\forall (m,n) \in {\mathbb N}_0^2 
\quad \forall (\xt,\yt) \in \widetilde T\colon\;\;
|\partial_{\xt}^m \partial_{\yt}^n \ut^{\BL}_\varepsilon(\xt,\yt)| 
\leq 
C \gamma^{m+n} m! \max\{n+1,\varepsilon^{-1}\}^n e^{-b \yt/\varepsilon},  
\end{equation}
where the constants $C$, $\gamma$, $b > 0$ are independent of $\varepsilon \in (0,1]$. 
We decompose the set of elements 
$\widetilde{\calT}^{\Mi,\text{\rm half},L,n}$ into two sets 
$\widetilde{\calT}_1:= \{\Kt \in \widetilde{\calT}^{\Mi,\text{\rm half},L,n}_{geo,\sigma}\,|\, 
\widetilde{\chi}^{\BL}|_{\Kt} \equiv 1\}$ and 
$\widetilde{\calT}_2:= \widetilde{\calT}^{\Mi,\text{\rm half},L,n}_{geo,\sigma}
\setminus \widetilde{\calT}_1$. For the elements of $\widetilde{\calT}_1$, 
Lemma~\ref{lemma:bdy-layer-approx}, (\ref{item:lemma:bdy-layer-approx-i}) 
and (\ref{eq:utbl-regularity}) give that there are $C$, $b > 0$ 
such that for every $q\in \bbN$ and every $\Kt \in \widetilde{\calT}_1$ 
\begin{equation}
\|\widetilde \chi^{\BL} \ut^{\BL}_\varepsilon - \widetilde \Pi_q ( \widetilde \chi^{\BL} \ut^{\BL}_\varepsilon) \|_{L^\infty(\Kt)} 
+ 
\varepsilon 
\|\nabla
(
\widetilde \chi^{\BL} \ut^{\BL}_\varepsilon - \widetilde \Pi_q ( \widetilde \chi^{\BL} \ut^{\BL}_\varepsilon)
)
 \|_{L^\infty(\Kt)}  
\leq 
C e^{-b q}. 
\end{equation}
For the elements of the set $\widetilde{\calT}_2$, we use 
(\ref{eq:coordinate-trafo}) to see that $\Kt \in \widetilde{\calT}_2$ implies
$\operatorname{dist}(\Kt,\{\yt=  0\}) > c$ for some $c > 0$ that depends 
solely on $F_{K^\M}$ and $\psi_j$. Hence, the smoothness of 
$\widetilde \chi^{\BL}$ and (\ref{eq:utbl-regularity}) 
provide 
$\|\widetilde \chi^{\BL} \ut^{\BL}_\varepsilon\|_{W^{1,\infty}(\Kt)} 
\leq C e^{-b/\varepsilon}$  for suitable $C$, $ b> 0$ 
and every $\Kt \in \widetilde{\calT}_2$. 
Hence, the 
stability properties of $\widetilde\Pi_q$ provided in 
(\ref{eq:tildePiq-stable}) and $q/\kappa \leq 1/\varepsilon$ imply 
for all $\Kt \in \widetilde{\calT}_2$ 
\begin{equation}
\|\widetilde \chi^{\BL} \ut^{\BL}_\varepsilon - 
\widetilde \Pi_q ( \widetilde \chi^{\BL} \ut^{\BL}_\varepsilon) \|_{L^\infty(\Kt)} 
+ 
\varepsilon 
\|\nabla(\widetilde 
\chi^{\BL} \ut^{\BL}_\varepsilon 
- 
\widetilde \Pi_q ( \widetilde \chi^{\BL} \ut^{\BL}_\varepsilon) )\|_{L^\infty(\Kt)}  
\leq C e^{-b q}.
\end{equation}
Let us now sketch the arguments for the approximation 
of $\widetilde \chi^{\BL} \ut^{\BL}_\varepsilon$ on $\Tf$. 
For notational simplicity, 
assume that $F_{K^\M}(\Tf) \subset \Omega_j$.  (If 
$F_{K^\M} (\Tf) \subset \Omega_{j'}$ for some different
$j'$, then replace $j$ with $j'$ in what follows.) The regularity 
assertion (\ref{eq:utbl-regularity}) is still valid. 
Next, one observes that on $\Tf$, one has 
$\yt \sim \rt(\xt,\yt) = \operatorname{dist}((\xt,\yt),\bO)$. 
Hence, recalling 
(\ref{eq:utbl-regularity}), 
$\ut^{\BL}_\varepsilon$ satisfies, 
for suitable $C$, $b > 0$ and for all $p \in {\mathbb N}_0$,  
\begin{equation}
|\nabla^p \ut^{\BL}_\varepsilon(\cdot)| 
\leq 
C \max\{p+1,\varepsilon^{-1}\}^p 
e^{-b \rt(\cdot)/\varepsilon} \quad \mbox{ on $\Tf$.}
\end{equation}
Replacing the appeal to 
Lemma~\ref{lemma:bdy-layer-approx}, (\ref{item:lemma:bdy-layer-approx-i})
with a reference to
Lemma~\ref{lemma:bdy-layer-approx}, (\ref{item:lemma:bdy-layer-approx-ii}), 
we may argue as above to obtain 
$$
\|\widetilde \chi^{\BL} \ut^{\BL}_\varepsilon - 
\widetilde \Pi_q \widetilde \chi^{\BL} \ut^{\BL}_\varepsilon 
\|_{L^\infty(\Tf)} + 
\varepsilon \|\nabla (\widetilde \chi^{\BL} \ut^{\BL}_\varepsilon - 
\widetilde \Pi_q \widetilde \chi^{\BL} \ut^{\BL}_\varepsilon )
\|_{L^\infty(\Tf)} 
\leq C e^{-b q}. 
$$
This concludes the arguments for the 
approximation of $\ut^{\BL}_\varepsilon$ on a mixed patch 
$\widetilde{\calT}^{\Mi,L,n}_{geo,\sigma}$. The approximation on 
corner patches $\widetilde{\calT}^{\Co,n}_{geo,\sigma}$, 
tensor patches $\widetilde{\cal T}^{\Te,n}_{geo,\sigma}$, or 
boundary layer patches $\widetilde{\cal T}^{\BL,L}_{geo,\sigma}$ is similar.  

\emph{Step II.3: Approximation of $\chi^{\CL} u^{\CL}_\varepsilon$:}
Structurally, the proof is similar to the procedure in Step~{II.2}. 
{}From \cite[Thm.~{2.3.4}]{melenk02} we have in a neighborhood 
$B_j$ of vertex $\bA_j$ that $u^{\CL}_\varepsilon$ satisfies on 
$(B_j \cap \Omega_j) \cup (B_j \cap \Omega_{j+1})$ with 
$r_j(\cdot) = \operatorname{dist}(\cdot,A_j)$ 
\begin{equation}
\forall p \in {\mathbb N}_0 \colon\;\; 
|\nabla^p u^{\CL}_\varepsilon(\cdot)|
\leq 
C \gamma^p p! \varepsilon^{\beta_j-1} (r_j(\cdot))^{1-p-\beta_j} 
e^{-\alpha r_j(\cdot)/\varepsilon},
\end{equation}
where $C$, $\alpha >0$ and $\beta_j \in [0,1)$ are independent of 
$\varepsilon\in (0,1]$. 
Let $K^\M$ be a patch abutting on $\bA_j$. 
Such a patch has to be either a corner patch or a mixed patch. 
Then $K^\M \cap \Omega_j$ (and similarly $K^\M \cap \Omega_{j+1}$) 
consists of one or two half-patches that are push-forwards of 
$\widetilde{\calT}^\prime 
\in 
\{
\widetilde{\calT}^{\Mi,\text{\rm half},L,n}_{geo,\sigma}, 
\widetilde{\calT}^{\Co,\text{\rm half},n}_{geo,\sigma}, 
\widetilde{\calT}^{\Co,\text{\rm half},\text{\rm flip},n}_{geo,\sigma}
\}$. 
For simplicity of exposition, assume that $K^\M \subset B_j$. 
By the analyticity of the patch-map $F_{K^\M}$, 
the shape regularity of $F_{K^\M}$ together with $F_{K^\M}(\bO) = \bA_j$, 
and by Lemma~\ref{L:cornerLayTrafo},
we get that 
$\ut^{\CL}_\varepsilon
:= 
u^{\CL}_\varepsilon \circ F_{K^\M}$ 
satisfies on $\calO:= F_{K^\M}^{-1} (K^\M \cap \Omega_j)$
\begin{equation}
\forall p \in {\mathbb N}_0 \colon\;\;
|\nabla^p \ut^{\CL}_\varepsilon(\cdot)|
\leq 
C \gamma^p p! \varepsilon^{\beta_j-1} (\rt(\cdot))^{1-p-\beta_j} 
e^{-\alpha \rt(\cdot)/\varepsilon},
\end{equation}
with possibly adjusted values for $C$, $\gamma$, $\alpha > 0$.
We also note that the pull-back $\widetilde \chi^{\CL}$ is 
smooth and identically $1$ near $\bO$. 
Hence, using Lemma~\ref{lemma:corner-layer-approx} we obtain 
\begin{align*}
& 
\|\widetilde \chi^{\CL} \ut^{\CL}_\varepsilon 
- \widetilde \Pi_q (\widetilde \chi^{\CL} \ut^{\CL}_\varepsilon)\|_{L^2(\calO)} 
+ 
\varepsilon \|\nabla\bigr( \widetilde \chi^{\CL} \ut^{\CL}_\varepsilon 
- \widetilde \Pi_q (\widetilde \chi^{\CL} \ut^{\CL}_\varepsilon)\bigl)\|_{L^2(\calO)} 
\\
& \qquad \leq C \left( \varepsilon e^{-bq} + \varepsilon^{\beta_j} q^4 \sigma^{n (1-{\beta_j})}\right). 
\end{align*}

\emph{Step II.4: Approximation of $r_\varepsilon$:}
We approximate $r_\varepsilon$ by zero. 
We note that 
\cite[Thm.~{2.3.4}]{melenk02} asserts that 
$r_\varepsilon|_{\partial\Omega} = 0$
and that 
$\|r_\varepsilon\|_{H^1(\Omega)} \leq C e^{-b/\varepsilon}$ 
for suitable $C$, $b>0$ independent of $\varepsilon \in (0,1]$.
\end{proof}
\begin{corollary}
\label{coro:ExpCnv}
Assume the hypotheses on $\Omega$ and the data $A$, $c$, $f$ as
in Theorem~\ref{thm:singular-approx}.
Let $\Tg$ be a geometric boundary  layer  mesh 
as in Definition~\ref{def:bdylayer-mesh}.

Then, 
for every fixed $0<\sigma < 1$ and $c_1 > 0$
there exist constants $C$, $b>0$ such that,
for every $0 < \eps \leq 1$, 
with the choices
$q \simeq n \geq L \geq c_1 |\log \eps|$,
the solution 
$u_\eps\in H^1_0(\Omega)$ of \eqref{eq:sg-per} 
can be approximated from $S^q_0(\Omega,\Tg)$ 
at an exponential rate: 
$$
\inf_{v\in S^q_0(\Omega,\Tg)} \| u_\eps - v \|_{\eps,\Omega} 
\leq 
C \exp(-b\sqrt[4]{N})\;,
\quad 
N = {\rm dim}(S^q_0(\Omega,\Tg)).
$$
\end{corollary}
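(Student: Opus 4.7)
The plan is to derive the corollary as a direct consequence of Theorem~\ref{thm:singular-approx}, by trading the algebraic term $\varepsilon^{\beta}\sigma^{(1-\beta)n}$ and the $e^{-bq}$ against each other and then converting the rate from $q$ to $N^{1/4}$.

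First, I would verify that the scale resolution condition (\ref{eq:L-eps-resolution-foo}) is satisfied under the hypothesis $L \ge c_1 |\log \varepsilon|$. Since $\sigma \in (0,1)$, the condition $\sigma^L/\varepsilon \le c_1^{\mathrm{thm}}$ required by Theorem~\ref{thm:singular-approx} is equivalent to $L \ge (|\log\varepsilon| + \log c_1^{\mathrm{thm}})/|\log\sigma|$, which is implied by $L \ge c_1 |\log\varepsilon|$ after a straightforward adjustment of the constant $c_1$ (absorbing $|\log\sigma|^{-1}$ and, for small $\varepsilon$, the additive term). Hence Theorem~\ref{thm:singular-approx} applies and provides $\beta \in [0,1)$ and $C$, $b > 0$ such that
$$
\inf_{v\in S^{q}_0(\Omega,\Tg)} \|u_\varepsilon - v\|_{\varepsilon,\Omega}
\le C q^{9}\bigl[\varepsilon^{\beta} \sigma^{(1-\beta)n} + e^{-b q}\bigr].
$$

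Next, I would rewrite the right-hand side as a single pure exponential in $q$. Writing $\sigma^{(1-\beta)n} = \exp(-(1-\beta)|\log\sigma|\,n)$ and using the equivalence $n \simeq q$ yields exponential decay at rate $b_1 := (1-\beta)|\log\sigma|\,c_n$ in $q$, where $c_n > 0$ is the implicit constant in $n \ge c_n q$. Combined with $\varepsilon^{\beta}\le 1$, the first bracketed term is bounded by $\exp(-b_1 q)$, and the polynomial prefactor $q^{9}$ is absorbed into a slightly smaller exponential rate. The net bound becomes
$$
\inf_{v\in S^{q}_0(\Omega,\Tg)} \|u_\varepsilon - v\|_{\varepsilon,\Omega} \le C' e^{-b' q},
$$
for constants $C'$, $b'>0$ independent of $\varepsilon\in (0,1]$.

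Finally, I would express the rate in terms of $N$. The dimension estimate of Theorem~\ref{thm:singular-approx} together with $L \le n \simeq q$ gives
$$
N \le C\bigl(L^2 q^2 \operatorname{card}{\mathcal T}^{\M} + n q^2 J\bigr) \le C'' q^{4},
$$
so that $q \gtrsim N^{1/4}$. Substituting into $e^{-b'q}$ yields the claimed $C\exp(-b\sqrt[4]{N})$. I anticipate no serious obstacle beyond routine constant tracking; all the analytic heavy lifting is inside Theorem~\ref{thm:singular-approx}. The only points requiring mild attention are the absorption of polynomial prefactors into exponentials and the compatibility of the constants in the equivalence $q \simeq n$ with the exponential rate $|\log\sigma|$, both of which are standard.
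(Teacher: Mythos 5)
Your derivation is correct and is the natural (indeed essentially forced) route: the paper states Corollary~\ref{coro:ExpCnv} without a separate proof, and the argument you give --- invoke Theorem~\ref{thm:singular-approx}, bound $\varepsilon^\beta\sigma^{(1-\beta)n}\le\sigma^{(1-\beta)n}=\exp(-(1-\beta)|\log\sigma|\,n)$ and use $n\simeq q$ to turn it into a pure exponential in $q$, absorb the $q^9$ prefactor, and then use the dimension estimate with $L\le n\simeq q$ to get $N\lesssim q^4$, hence $q\gtrsim N^{1/4}$ --- is exactly the intended derivation. One minor slip worth noting: solving $\sigma^L/\varepsilon\le c_1^{\mathrm{thm}}$ for $L$ yields $L\ge(|\log\varepsilon|-\log c_1^{\mathrm{thm}})/|\log\sigma|$ (minus, not plus, in front of $\log c_1^{\mathrm{thm}}$), but since only the coefficient of $|\log\varepsilon|$, namely $1/|\log\sigma|$, matters for the conclusion, this is inconsequential. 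Your remark that the constant in $L\ge c_1|\log\varepsilon|$ must be compatible with $1/|\log\sigma|$ (so that the scale resolution condition of Theorem~\ref{thm:singular-approx} is actually enforced) is also the right thing to flag; the corollary's phrasing leaves this adjustment implicit in the freedom of choosing $L$ within the stated constraints.
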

\begin{remark}
\label{rem:balanced-norm}
In addition to the approximation in the energy norm in Cor.~\ref{coro:ExpCnv},  
exponential approximation results in the so-called ``balanced norm''
$\|v\|^2_{\sqrt{\eps}}:= \eps \|\nabla v\|^2_{L^2(\Omega)} + \|v\|^2_{L^2(\Omega)}$
(or even in $H^1(\Omega)$) are possible under slightly stronger conditions: 
for sufficiently large $C_1$, the constraint  
$q \simeq n \geq L \geq C_1 |\log \eps|$ yields 
$$
\inf_{v\in S^q_0(\Omega,\Tg)} \| u_\eps - v \|_{\sqrt{\eps},\Omega} 
\leq 
C \exp(-b'\sqrt[4]{N})\;
$$
for suitable $b' > 0$ since a factor $\eps^{-1/2}$ can be compensated by the 
exponentially decaying terms $e^{-b \sqrt[4]{N}  } $ in Cor.~\ref{coro:ExpCnv}. 
\eremk
\end{remark}
Theorem~\ref{thm:singular-approx} is restricted to $\varepsilon \in (0,1]$.
For $\varepsilon \ge 1$, 
\eqref{eq:sg-per} is a regularly perturbed elliptic boundary value problem
and 
exponential convergence of $hp$-FEM with mere geometric corner refinement
follows by standard results \cite{phpSchwab1998,melenk02}. 
\begin{proposition}
\label{prop:singular-approx-eps>1}
Assume the hypotheses on $\Omega$ and the data $A$, $c$, $f$ as
in Theorem~\ref{thm:singular-approx}. 
Let $\Tg$ be a geometric boundary  layer  mesh.
Then, there are constants $C$, $b>0$, $\beta \in [0,1)$ 
depending solely on $A$, $c$, $f$, the analyticity properties of the
patch maps for the macro-triangulation, and $\sigma$   such that for 
any $\varepsilon \ge 1$, the solution
$u_\varepsilon$ of \eqref{eq:sg-per} satisfies for every $n$, $L$, $q \in \bbN$
\begin{equation}
\label{eq:thm:singular-approx-eps>1}
\inf_{v \in S^{q}_0(\Omega,\Tg)}
\|u_\varepsilon - v\|_{H^1(\Omega)} \leq C \varepsilon^{-2} 
\left( q^9 \sigma^{(1-\beta)n}   + e^{-b q}\right).
\end{equation}
\end{proposition}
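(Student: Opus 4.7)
The plan is to proceed in the spirit of Step~I (the ``asymptotic case'' $q\varepsilon \ge \kappa$) from the proof of Theorem~\ref{thm:singular-approx}, noting that the condition $\varepsilon \ge 1$ automatically places us in a regime where $q\varepsilon \ge 1$ for every $q\in\bbN$, so that the scale‐resolution condition \eqref{eq:lemma:asymptotic-case-20} is satisfied trivially (up to choosing the implicit constant $\kappa$). Since $\varepsilon\ge 1$, the problem \eqref{eq:sg-per} is regularly perturbed: after dividing by $\varepsilon^2$ the reaction coefficient $c/\varepsilon^2$ remains strictly positive and the right-hand side $f/\varepsilon^2$ is analytic on $\overline{\Omega}$, so the solution $u_\varepsilon$ possesses only the ``classical'' corner singularities at the vertices $\bA_j$ induced by the geometry of $\partial\Omega$; no boundary layers are present. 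The factor $\varepsilon^{-2}$ in \eqref{eq:thm:singular-approx-eps>1} will arise from propagating $\|f/\varepsilon^2\|$ through the \emph{a priori} regularity estimates.

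First I would invoke the analytic regularity theory of \cite[Thm.~{2.3.1}]{melenk02}, which, when specialized to $\varepsilon\ge 1$, yields for each vertex $\bA_j$ the existence of $\beta_j\in[0,1)$ and constants $C$, $\gamma>0$ such that
\begin{equation*}
|\nabla^p (u_\varepsilon - u_\varepsilon(\bA_j))(\bx)|
\leq
C \varepsilon^{-2} \gamma^p (r_j(\bx))^{1-\beta_j-p} \max\{p+1,r_j(\bx)\}^{p+1}
\end{equation*}
on a neighborhood of $\bA_j$, together with the interior bound $|\nabla^p u_\varepsilon(\bx)| \le C \varepsilon^{-2} \gamma^p (p+1)^{p+2}$ away from the vertices; the factor $\varepsilon^{-2}$ is inherited from the stability estimate applied to $f/\varepsilon^2$. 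This is precisely the analytic class \eqref{eq:lemma:asymptotic-case-10} from Lemma~\ref{lemma:asymptotic-case} (formally with ``$\varepsilon$'' there replaced by $1$ and an additional prefactor $\varepsilon^{-2}$).

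Next I would apply the interpolation operator $\Pi_q$ introduced in the proof of Theorem~\ref{thm:singular-approx} patch by patch. For macro-patches $K^\M$ that abut on a vertex $\bA_j$, the pull‐back $\ut_\varepsilon = u_\varepsilon \circ F_{K^\M}$ satisfies (via Lemma~\ref{L:cornerLayTrafo} and the analyticity of $F_{K^\M}$) the regularity assumption \eqref{eq:lemma:asymptotic-case-10} with $\bO = F_{K^\M}^{-1}(\bA_j)$, $\beta = \beta_j$, and an overall prefactor $\varepsilon^{-2}$, so Lemma~\ref{lemma:asymptotic-case}(\ref{item:lemma:asymptotic-case-i}) delivers on such a patch
\begin{equation*}
\|\ut_\varepsilon - \widetilde\Pi_q \ut_\varepsilon\|_{L^\infty(\widetilde S)}
+ \|\nabla(\ut_\varepsilon - \widetilde\Pi_q \ut_\varepsilon)\|_{L^2(\widetilde S)}
\leq C \varepsilon^{-2} \bigl( q^9 \sigma^{n(1-\beta_j)} + e^{-bq}\bigr).
\end{equation*}
For interior (trivial) patches $K^\M$, the pull‐back $\uh$ is analytic on $\overline{\widehat S}$ with constants scaling like $\varepsilon^{-2}$, and Lemma~\ref{lemma:hat_Pi_1_infty}(\ref{item:lemma:hat_Pi_1_infty-ii}) (applied with $\varepsilon_x = \varepsilon_y = 1$, the resolution condition~\eqref{eq:hat-approx-constraint-rectangle} being trivial) yields $\|\uh - \widehat\Pi_q \uh\|_{W^{1,\infty}(\widehat S)} \leq C \varepsilon^{-2} e^{-bq}$. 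Summing the patchwise contributions over the finitely many macro‐elements in ${\mathcal T}^\M$ and setting $\beta = \max_j \beta_j \in [0,1)$ yields the claimed bound \eqref{eq:thm:singular-approx-eps>1}.

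The principal (minor) obstacle is bookkeeping: one must verify that the regularity estimate of \cite[Thm.~{2.3.1}]{melenk02} does admit the specialization to $\varepsilon\ge 1$ with an explicit $\varepsilon^{-2}$ prefactor (it does, by the \emph{a priori} stability $\varepsilon^2 \|u_\varepsilon\|_{H^2} + \|u_\varepsilon\|_{L^2} \lesssim \|f\|_{L^2}$ combined with the analytic shift theorem), and that Lemma~\ref{lemma:asymptotic-case} is insensitive to a uniform multiplicative constant $C_u = O(\varepsilon^{-2})$ in the regularity hypothesis \eqref{eq:lemma:asymptotic-case-10}, which it is by linearity of $\Pi_q$. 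No new mesh conditions beyond those already encoded in Definition~\ref{def:bdylayer-mesh} are required, and in particular the parameter $L$ plays no essential role here, consistent with the fact that the number of boundary‐layer refinements is irrelevant when no boundary layers are present.
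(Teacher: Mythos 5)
Your proposal follows essentially the same route as the paper's proof: divide through by $\varepsilon^2$ to recast \eqref{eq:sg-per} as a regularly perturbed elliptic problem with data $\varepsilon^{-2}c$, $\varepsilon^{-2}f$, invoke classical analytic corner regularity (the paper cites \cite{BabGuoCurved1988}; you obtain the same bounds from \cite[Thm.~{2.3.1}]{melenk02} specialized to $\varepsilon=1$, which is an equally valid reference), re‑run the ``asymptotic case'' approximation machinery (Lemma~\ref{lemma:asymptotic-case} at vertex patches, Lemma~\ref{lemma:hat_Pi_1_infty} on trivial patches), and carry the $\varepsilon^{-2}$ prefactor through by linearity. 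Your additional bookkeeping — verifying that $C_u=O(\varepsilon^{-2})$ propagates unchanged and that no boundary‑layer resolution condition enters — is a correct and slightly more explicit rendering of what the paper states more tersely.
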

\begin{proof}
The solution $u_\varepsilon \in H^1_0(\Omega)$ satisfies 
\begin{equation}
\label{eq:thm:singular-approx-eps>1-10}
-\nabla \cdot (A \nabla u_\varepsilon) + \varepsilon^{-2} c u_\varepsilon 
=
\varepsilon^{-2} f \quad \mbox{ in $H^{-1}(\Omega)$}.
\end{equation}
For $\varepsilon \geq 1$, 
the term $\varepsilon^{-2} c$ represents 
a regular perturbation and the analytic regularity theory for
linear, second order elliptic boundary value problems 
(e.g. \cite{BabGuoCurved1988} and the references there) is applicable.
The resulting regularity assertions are then those employed in
the ``asymptotic case'' in the proof of
Theorem~\ref{thm:singular-approx} with $\varepsilon = 1$ there.
The factor $\varepsilon^{-2}$ in (\ref{eq:thm:singular-approx-eps>1})
is a reflection of the fact that the right-hand side of
(\ref{eq:thm:singular-approx-eps>1-10}) include the factor $\varepsilon^{-2}$.
\end{proof}
\section{Numerical experiments}
\label{S:NumExp}
For $0<\varepsilon \leq 1$ and $f\equiv 1$ 
we consider the Dirichlet problem:
find $u_\varepsilon \in H_0^1(\Omega)$ such that
\[
-\varepsilon^2 \Delta u_\varepsilon + u_\varepsilon = f \;\; \text{ in } \;H^{-1}(\Omega).
\]
Here, the domain $\Omega$ is either the unit square 
$\Omega_1 = (0,1)^2$, 
the so-called ``$L$-shaped, polygonal domain'' 
$\Omega_2 \subset \R^2$ determined by the vertices
$\{ (0,0), (1,0), (1,1), (-1,1), \linebreak[4] (-1,-1), (0,-1)\}$, 
or the square domain with a slit 
$\Omega_3 = (-1,1)^2 \setminus (-1,0] \times \{0\}$.
%

In Figures~\ref{fig:square}--\ref{fig:slit}
we show examples of the meshes used in our computations on the three domains. 
These are constructed using the NGSolve/Netgen package \cite{netgen1}. 
For the square domain $\Omega = \Omega_1$ the resulting mesh 
is the geometric boundary layer mesh $\calT^{L,L}_{geo, \sigma}$  
with $L = 4$ and $\sigma = 0.25$. 
The same parameters are used in NGSolve/Netgen to construct the meshes for the other two domains, 
with the resulting meshes differing slightly from the strict definition of $\calT^{L,L}_{geo, \sigma}$ 
near the re-entrant corners. Nevertheless, 
we denote these meshes also by $\calT^{L,L}_{geo, \sigma}$ 
and make use of the finite element spaces $S^q_0(\Omega,\calT^{L,L}_{geo,\sigma})$. 
We also mention that in accordance with Remark~\ref{rem:bl-meshes}, 
the meshes shown in Figs.~\ref{fig:square}--\ref{fig:slit} do not satisfy 
requirement \ref{item:def-geo-mesh-9} of Definition~\ref{def:bdylayer-mesh}.

For each $p = 1,2,3,\dots$, we use the finite element space 
$S^q_0(\Omega,\calT^{L,L}_{geo,\sigma})$ 
with uniform polynomial order $q = p$ and with $L = p$ refinement levels 
towards boundaries and corners with refinement factor $\sigma = 0.25$.  
We denote by $u^h_\varepsilon \in S^p_0(\Omega,\calT^{p,p}_{geo,\sigma})$
    the corresponding finite element solution.
We measure the error in energy norm
\begin{equation}\label{eq:Erreh}
\text{error} 
= 
\left(\varepsilon^2 \|\nabla(u_\varepsilon-u^h_\varepsilon)\|^2_{L^2(\Omega)}
+ 
      \|u_\varepsilon-u^h_\varepsilon \|^2_{L^2(\Omega)}\right)^{1/2},
\end{equation}
where $u^h_\varepsilon$ denotes the discrete solution.
In place of the (unknown, for the considered examples)
exact solution  $u_\varepsilon$ we use a numerical approximation on a sufficiently fine mesh.   
The plots of the estimated numerical errors for the three domains are depicted 
in Figures~\ref{fig:square}--\ref{fig:slit}. 
Evidently, exponential convergence occurs.
In agreement with the theoretical analysis,
the experimentally observed exponential convergence has two regimes:
(i) an asymptotic regime 
in which the scale resolution condition $\sigma^p \lesssim \varepsilon$ 
is satisfied and 
(ii) a pre-asymptotic regime with $\sigma^p \gtrsim \varepsilon$. 

The observed exponential convergence in the preasymptotic regime 
(not rigorously shown in Theorem~\ref{thm:singular-approx}) 
is plausible for the following reason: 
the approximation error for boundary layer functions is dominated by the error on 
the elements touching the boundary and is of size $O(\sigma^{p/2})$ 
for every $p\in\mathbb{N}$.
The approximation error of the corner layer 
functions is likewise dominated by the error on the elements abutting on the vertices
of $\Omega$ and is of size $O(\sigma^{p(1-\beta)})$ 
for every $p\in\mathbb{N}$ and some fixed $\beta \in [0,1)$. 

%
\begin{figure}
  \centering
    \begin{overpic}[width=0.4\textwidth]{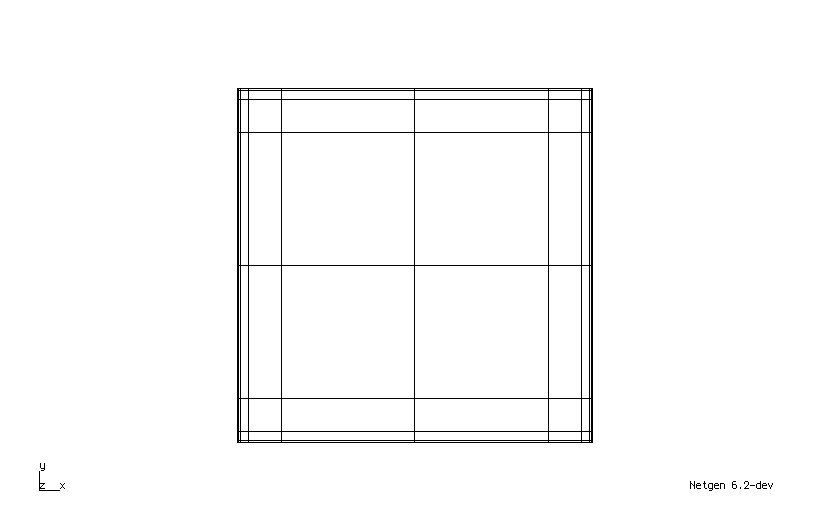}
    \end{overpic}
  \includegraphics[width=.5\textwidth]{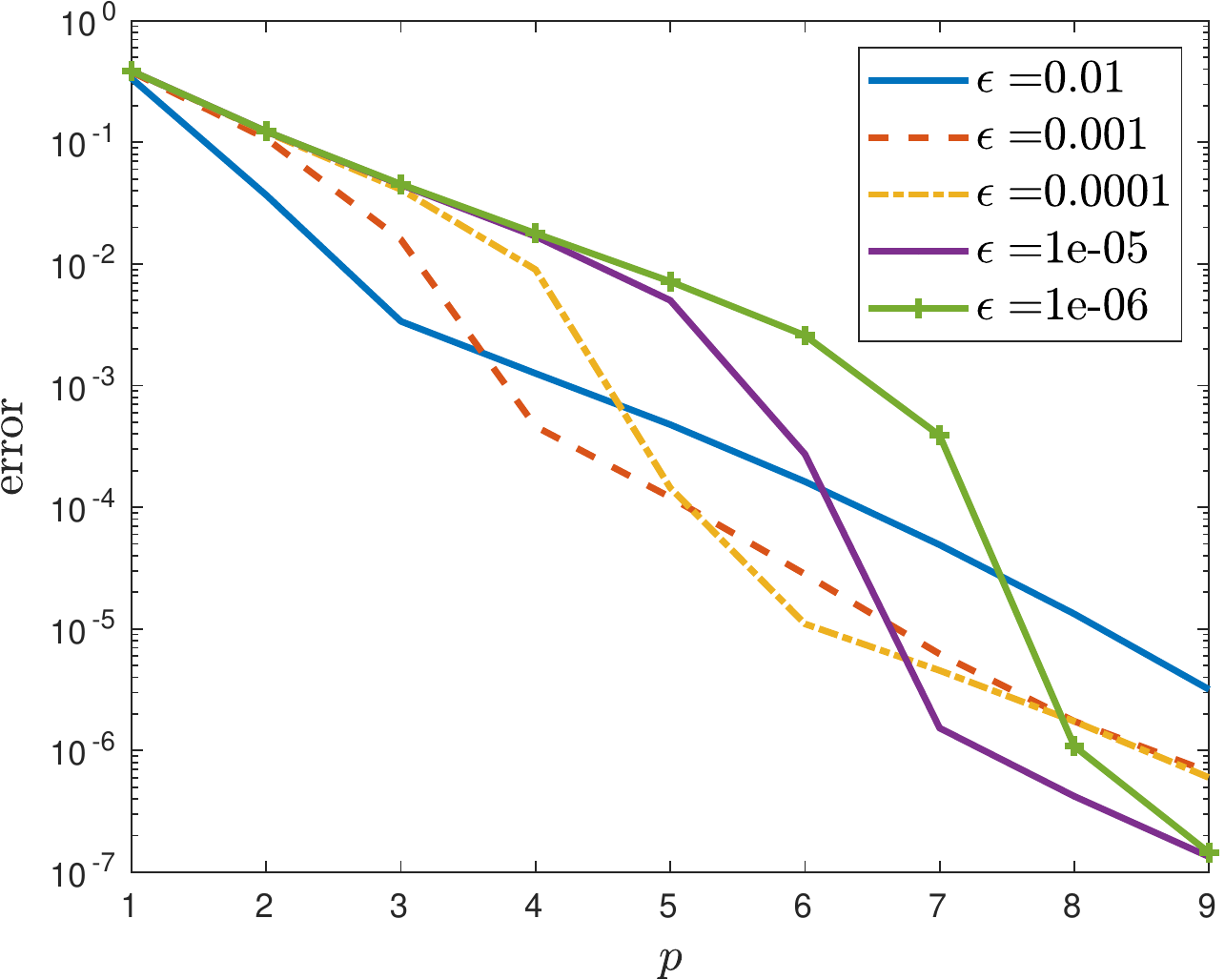}
  \caption{Right panel: Convergence in the energy norm (\ref{eq:Erreh}) 
   for the square domain for different values of $\varepsilon$ and $q = L = n = p$. 
Left panel: a Netgen-generated mesh used for the computations.}
  \label{fig:square}
\end{figure}

\begin{figure}
  \centering
    \begin{overpic}[width=0.4\textwidth]{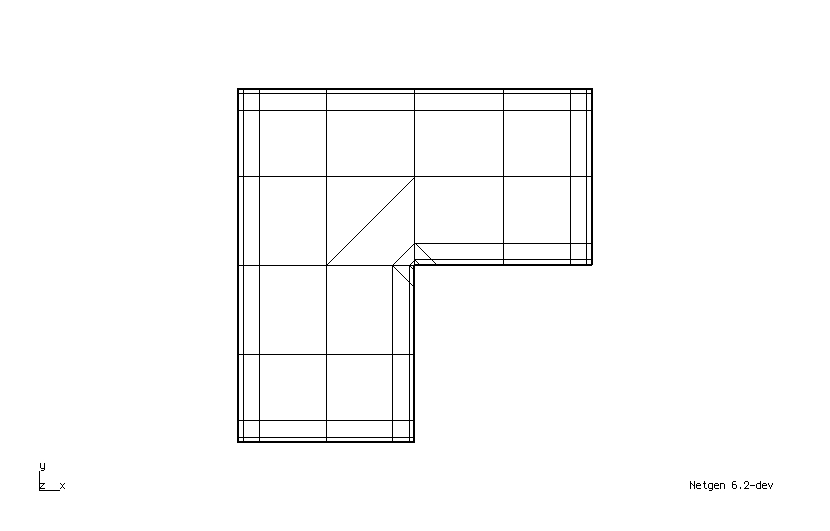}
    \end{overpic}
  \includegraphics[width=.5\textwidth]{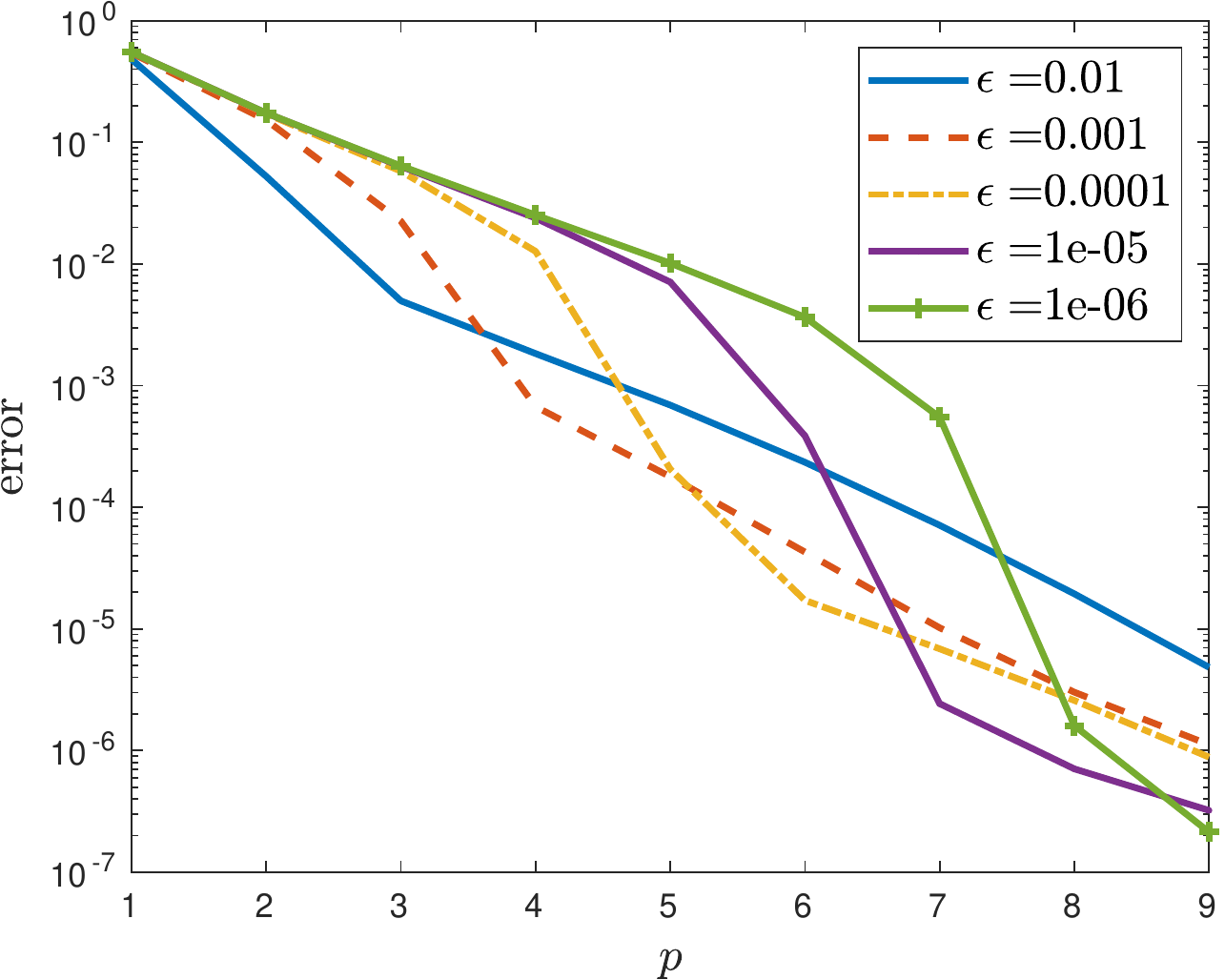}
  \caption{Right panel: 
   Convergence in the energy norm (\ref{eq:Erreh}) for the $L$-shaped domain for different values of $\varepsilon$ and 
           $q = L = n = p$.
Left panel: a Netgen-generated mesh used for the computations.}
  \label{fig:lshape}
\end{figure}

\begin{figure}
  \centering
        \begin{overpic}[width=0.4\textwidth]{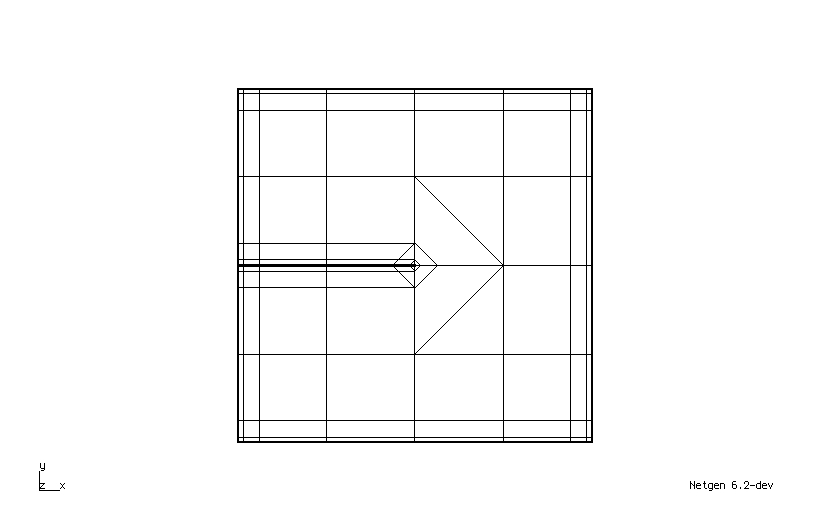}
     \end{overpic}
  \includegraphics[width=.5\textwidth]{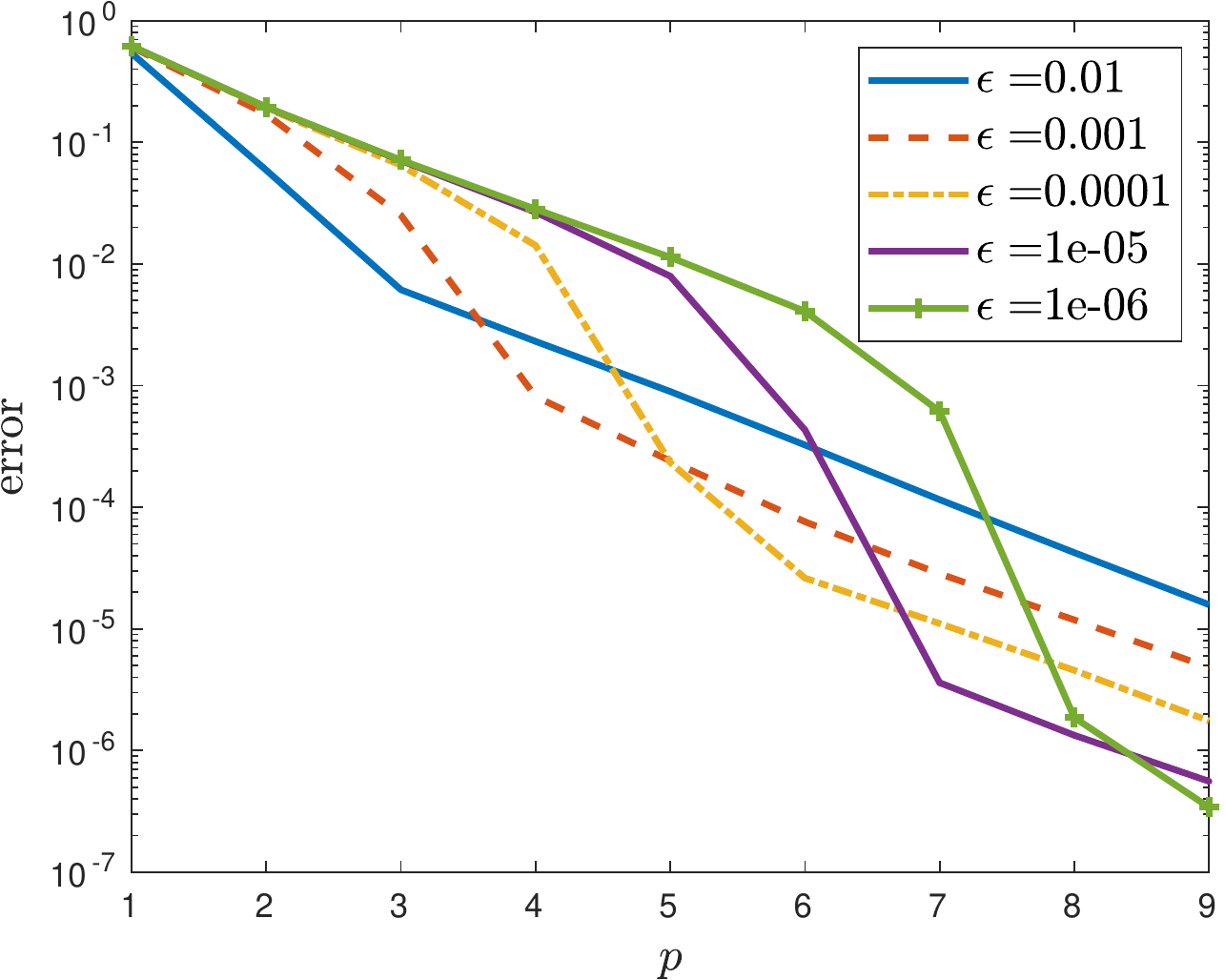}
  \caption{Right panel: Convergence in the energy norm (\ref{eq:Erreh}) 
           for the slit domain for different values of $\varepsilon$ and $q = L = n = p$.
           Left panel: a Netgen-generated mesh used for the computations.}
  \label{fig:slit}
\end{figure}


\section{Conclusions}
\label{S:Concl}
We established robust exponential convergence of $hp$-FEM 
for solutions of elliptic singular perturbation problems in polygons.
These solutions contain, usually, 
boundary layers, corner singularities and combinations of the two. 
We admitted possibly multiple length scales, and built the $hp$-FE
approximations on (patches of) geometric boundary layer meshes 
as described in Section~\ref{sec:MacTriGBLMes},
and depicted in Fig.~\ref{fig:patches}. 
The $hp$-FEM on this class of partitions 
is capable to resolve exponential boundary- and corner-layers
with multiple physical length scales under a scale resolution condition
that incorporates the smallest physical length scale.
The number of geometric mesh refinements to achieve
this grows only logarithmically with respect to the smallest 
length scale.
The proposed, spectral numerical boundary layer resolution by $hp$-FEM 
is based on boundary fitted, 
structured mesh-patches in the physical domain $\Omega$.
They are pushforwards from a finite catalog ${\mathfrak P}$ 
of canonical, highly structured, anisotropic reference mesh patterns. 
Such pushforwards are readily available in industrial CAD / CAM environments
such as, e.g., NgSolve \cite{netgen1,netgen2}.
For deployment,
it only requires (a lower bound on) the smallest physical length-scale $\eps$.
This can often be deduced from heuristic, physical considerations, e.g. scaling
or dimensional analysis.
The methodology should be contrasted with so-called 
\emph{``augmented/enriched spectral discretizations''}
proposed even recently in \cite{TemamSgPrt2018,TemamEtAl2020} and in references there. 
For this numerical approach, 
\emph{explicit, functional forms of boundary and corner layer components of the solution} 
are required. 
The analytic derivation of closed forms
for such solution components in general geometries 
for possibly nonlinear PDEs requires an elaborate asymptotic analysis, which 
is unnecessary in the present approach.

As we explained in the numerical experiments section 
patch-structured meshes as required here can be generated, in general geometries, 
by specialized mesh generators such as Netgen \cite{netgen1}.
We hasten to add, however, that our analysis can readily be extended to cover 
more general partitions, such as geometric boundary layer meshes that also contain 
anisotropic triangles. 

The focus of the present work was on robust 
exponential convergence rate bounds for singular perturbation problems
in nonsmooth domains by $hp$ finite element methods. 
We proved that they  
afford robust, exponential convergence on patchwise structured meshes 
with possibly anisotropic, geometric refinement towards the ``support set'' (i.e., 
the subset of $\overline{\Omega}$ off which the layer components decay exponentially), 
of the boundary and corner layers.
As a rule, robust exponential convergence requires genuine $hp$-FE capabilities, i.e., 
simultaneous mesh refinement and polynomial degree increase, as featured in the 
$hp$-FE spaces $\{S^p_0(\Omega,\calT^{p,p}_{geo,\sigma})\}_{p\geq 1}$ 
used in our numerical experiments. 
The corresponding, patchwise structured triangulations
can be automatically generated by specialized mesh generators, 
in domains of engineering interest (see, e.g.,\cite{netgen1}).
Although we mainly considered the model linear, second order elliptic singular
perturbation problem \eqref{eq:sg-per}, corresponding solution families are
known to arise for several common models in solid and fluid mechanics, see,
e.g. \cite{ArnFalkBLRM1996,GMSShell1998,DaugeShellBl,TemamSgPrt2018} and 
the references there.

The present $hp$-error analysis implies exponential upper bounds on 
Kolmogorov $N$-widths of solution sets $\{ u_\eps : 0< \eps \leq 1 \}$ of 
\eqref{eq:sg-per}. 
We recall that, for a normed linear space $X$
(with norm $\| \circ \|_X$) and for a subset $B\subset X$,
the $N$-width is given by 
\begin{equation}\label{eq:KlmNwid}
d_N(B,X) = \inf_{E_N} \sup_{f\in B} \inf_{g\in E_N} \| f - g \|_X,
\end{equation}
where the first infimum is taken over all subspaces $E_N$ of $X$ 
of dimension $N\in \bbN$. 
Subspace sequences $\{ E_N \}_{N\geq 1}$ which attain the rates 
of $d_N(B,X)$ in \eqref{eq:KlmNwid}
can be realized numerically by non-polynomial, 
so-called \emph{reduced bases} (see \cite{RBM}). 

In \eqref{eq:KlmNwid} 
we choose 
$(X,\|\circ\|_X) = (H^1_0(\Omega), \|\circ\|_{\eps,\Omega})$
with the energy norm $\|\circ\|_{\eps,\Omega}$ of \eqref{eq:epsnrm}. 
Given a complex neighborhood $G \subset {\mathbb C}^2$ of 
$\overline{\Omega}$ we take 
$B\subset X$ as the set of solutions 
of \eqref{eq:sg-per} corresponding right-hand sides $f$ 
that admit a holomorphic extension to $G$ with $\|f\|_{L^\infty(G)} \leq 1$.
From Corollary~\ref{coro:ExpCnv}, 
with $E_N = S^q_0(\Omega,\Tg)$ we obtain 
$d_N(B,X)\lesssim \exp(-b\sqrt[4]{N})$ 
with a continuous, piecewise polynomial 
interpolant to bound the inner infimum in \eqref{eq:KlmNwid}.
Remark that \cite[Theorem 3.2]{JMMnwidth} stipulates for \eqref{eq:sg-per}
the (sharp) majorization $d_N(A,X) \lesssim \exp(-b'\sqrt{N})$
with $b'>0$ possibly different from $b$ and
with nonpolynomial $E_N$, but with $b'$ and 
the constant hidden in $\lesssim$ independent of $\eps$.
For analytic $\partial \Omega$ and a single, known
boundary layer length scale $\eps \in (0,1]$,
this rate is
attained by $hp$-FEM on so-called minimal boundary layer meshes
(e.g. \cite{SSX98_321,melenk-schwab98}) 
which are $\eps$-dependent, however.

The underlying concept of 
using patchwise structured meshes to approximate parametric solution families 
to linear, elliptic singularly perturbed boundary value problems
extends also to $h$-version FEM. 
Here, 
in patches abutting on the boundary analogs of so-called ``Shishkin meshes'', 
see, e.g., \cite{ShishCrnerBlr87}, \cite[Sec.~{3.5.2}]{RoosStynsTobiska2ndEd},
could be employed to achieve \emph{robust, algebraic rates of convergence} under
weaker, finite order differentiability assumptions on the data $A$, $c$, and $f$
than the presently assumed analyticity in $\overline{\Omega}$ of these data.
The present results will constitute a foundation for proving
exponential convergence of several $hp$ discretizations of 
(spectral) fractional diffusion problems 
as presented in \cite{BMNOSS17_732}
in curvilinear polygonal domains $\Omega$.
Details will be developed in \cite{BMS20}.

The model problem (\ref{eq:sg-per}) considers homogeneous
Dirichlet boundary conditions. The approximation result 
Theorem~\ref{thm:singular-approx} relies on the regularity results of
\cite{melenk02}, which decomposes the solution (\ref{eq:sg-per}) 
into boundary and corner layer components. Similar decompositions
can be expected to hold also for other boundary conditions. Then the 
approximation results of Section~\ref{S:AppRefElt} are applicable 
indicating that $hp$-FEM on similarly patchwise structured meshes 
will likewise lead to robust exponential convergence. 
\appendix
\section{Analytic changes of variables}
\label{AppA:AnChVar}
The following lemma shows how boundary layer functions are transformed under the patch maps
if the edge $\{\yt = 0\}$ of $\widetilde S$ is mapped to a subset of $\partial\Omega$: 
\begin{lemma}
\label{L:bdyLayTrafo}
Let $G_x \subset {\mathbb R} \times {\mathbb R}^+$ be a domain.
Let the map $M: (\xt,\yt) \mapsto (\theta,\rho)$ be of the form 
$M(\xt,\yt) = (\check \theta(\xt,\yt) , y \check \rho(x,y))$ for some 
functions $\check \theta$, $\check\rho \ge \rho_0>0$ that are analytic on 
$\operatorname{closure}(G_x)$, i.e., there are constants 
$C_M, \gamma_M > 0$ such that 
$\|\nabla^n \check\theta\|_{L^\infty(G_x)}$, $\|\nabla^n \check\rho\|_{L^\infty(G_x)} 
\leq 
C_M \gamma_M^n n!$ for all $n \in {\mathbb N}_0$. 
Let ${\mathcal O}_x \subset G_x$ be open and 
let ${\mathcal O}$ be an open neighborhood of $M({\mathcal O}_x)$. 
Let $u$ be analytic on ${\mathcal O}$ and assume that, 
for some function $C_u$ and some constants $b > 0$, $\gamma > 0$, 
there holds 
\begin{align*}
\forall (m,n)\in{\mathbb N}_0^2\quad \forall (\rho,\theta) \in {\mathcal O}\colon\;\;
|\partial_\rho^n \partial_\theta^m u(\theta,\rho)| 
\leq 
C_u(\theta,\rho)  e^{-b \rho/\varepsilon} \gamma^{n+m} m! \max\{n,\varepsilon^{-1}\}^n 
.
\end{align*}
Then there are constants $b'$, $\widetilde \gamma > 0$ (depending only on $b$, $\gamma$, and $M$) 
such that the function $\widetilde u:= u \circ M$ satisfies with the 
notation $(\rho,\theta) = M(\xt,\yt)$ 
\begin{align*}
\forall (m,n) \in {\mathbb N}_0^2 
\quad \forall (\xt,\yt) \in \mathcal{O}_x \colon\;\;
|\partial_{\yt}^n \partial_{\xt}^m \widetilde u(\xt,\yt)| 
\leq 
C_u(\theta,\rho) e^{-b' \yt/\varepsilon} \gamma^{n+m} m! \max\{n,\varepsilon^{-1}\}^n . 
\end{align*}
\end{lemma}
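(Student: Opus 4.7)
The plan is to prove this by the multivariate Faà di Bruno (chain rule) formula, combined with two structural observations that are specific to the form of $M$ and the anisotropic nature of the target bound.

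\emph{Step 1: transfer of exponential decay.} Since $\check\rho \ge \rho_0 > 0$, we have the pointwise lower bound $\rho = \yt \check\rho(\xt,\yt) \ge \rho_0 \yt$. Hence for any $0 < b'' < b\rho_0$,
\begin{equation*}
e^{-b\rho/\varepsilon} \;\le\; e^{-b\rho_0 \yt/\varepsilon} \;=\; e^{-b'' \yt/\varepsilon}\, e^{-(b\rho_0 - b'')\yt/\varepsilon}.
\end{equation*}
The second factor on the right will be used as a reservoir to absorb polynomially-growing-in-$(\yt/\varepsilon)$ factors that arise from differentiating $\yt\check\rho$, via the elementary estimate $(\yt/\varepsilon)^{c} e^{-\delta \yt/\varepsilon} \le (c/(e\delta))^{c} \lesssim C_\delta^{\,c}\, c!$.

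\emph{Step 2: Faà di Bruno expansion with anisotropic bookkeeping.} Writing $\phi(\xt,\yt) := \yt\check\rho(\xt,\yt)$, the multivariate chain rule expresses $\partial_\xt^m \partial_\yt^n \widetilde u$ as a finite sum, indexed by partitions of the $(m,n)$-sized differentiation pattern into two groups of subgroups (one group attached to $\check\theta$, the other to $\phi$), of terms of the shape
\begin{equation*}
\bigl(\partial_\theta^{m_1}\partial_\rho^{n_1} u\bigr)\!\circ\! M \;\cdot\; \prod_{i} \partial_\xt^{a_i}\partial_\yt^{b_i} \check\theta \;\cdot\; \prod_{j} \partial_\xt^{\alpha_j}\partial_\yt^{\beta_j} \phi,
\end{equation*}
with $\sum a_i + \sum \alpha_j = m$ and $\sum b_i + \sum \beta_j = n$, and where $n_1$ equals the number of $\phi$-factors. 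The crucial identity $\partial_\xt^{\alpha}\partial_\yt^{\beta} \phi \;=\; \beta\, \partial_\xt^{\alpha}\partial_\yt^{\beta-1}\check\rho \;+\; \yt\, \partial_\xt^{\alpha}\partial_\yt^{\beta}\check\rho$ (with the first term absent when $\beta = 0$) shows that every $\phi$-factor with $\beta_j = 0$ carries an explicit $\yt$. Thus, if $c$ denotes the number of $\phi$-factors with $\beta_j = 0$ (i.e.\ the number of $\phi$-factors fed only by $\xt$-derivatives), the term above carries an explicit factor $\yt^{c}$.

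\emph{Step 3: size of each term.} Combining the analytic bounds on $\check\theta$ and $\check\rho$ (with constants $C_M,\gamma_M$) with the assumed bound on $u$, one such term is bounded pointwise on $\mathcal O_x$ by
\begin{equation*}
C_u\,e^{-b\rho/\varepsilon}\,\widetilde\gamma^{\,m+n}\, m_1!\,\max\{n_1,\varepsilon^{-1}\}^{n_1}\,\prod_i a_i! b_i!\,\prod_j \alpha_j!\beta_j!\,\cdot\, \yt^{c}
\end{equation*}
up to combinatorial constants depending only on $M$. Now $\max\{n_1,\varepsilon^{-1}\}^{n_1}\le e^{n_1} n_1!\,\varepsilon^{-n_1}$, and we split the $\varepsilon^{-n_1}$ as $\varepsilon^{-c}\cdot\varepsilon^{-(n_1-c)}$. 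The factor $\yt^{c}\varepsilon^{-c} e^{-(b\rho_0-b'')\yt/\varepsilon}$ is absorbed by Step 1 at the cost of an analytic $c!$-factor. The remaining $\varepsilon^{-(n_1-c)}$ is fed entirely by the $\phi$-factors with $\beta_j \ge 1$, whose combined $\yt$-order is at most $n$; consequently $n_1 - c \le n$, so the $\varepsilon$-dependence condenses into $\max\{n,\varepsilon^{-1}\}^{n}$ after collecting factorials through the standard multinomial bound for Faà di Bruno.

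\emph{Step 4: summation.} Summing the terms over the partitions and invoking the standard combinatorial bound for the number of partitions together with the factorial identities used in the proof of analyticity of compositions (e.g.\ in the form given in \cite[App.~B]{melenk02}), one collects all analytic-type factors $m_1!\prod a_i!\prod \alpha_j!\,\cdot\,c!$ into an overall $m!$ for the $\xt$-derivatives, and $n_1!\prod b_i!\prod \beta_j!$ combined with $\max\{n,\varepsilon^{-1}\}^n$ into a single $\max\{n,\varepsilon^{-1}\}^n$, obtaining the asserted bound with a suitable $\widetilde\gamma$ and $b' = b''$.

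The main obstacle is the anisotropic bookkeeping of Step 3: one has to see that $\xt$-derivatives entering through $\rho$ never produce factors of $\varepsilon^{-1}$, and this relies entirely on the specific factorization $\rho = \yt\check\rho$ together with $\partial_\xt\yt = 0$, which guarantees that such factors always come paired with $\yt$ and can be harmlessly absorbed into the exponential decay.
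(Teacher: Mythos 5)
Your approach (Faà di Bruno with anisotropic bookkeeping) is genuinely different from the paper's, which instead extends $u$ holomorphically to a polydisc and applies Cauchy's integral formula for derivatives with an anisotropic choice of contour radii: a fixed $\delta_1$ in the $\xt$-direction (yielding the $m!$ factor) and a degree- and $\varepsilon$-dependent $\delta_2 = \min\{(|\alpha_2|+1)\varepsilon,\delta\}$ in the $\yt$-direction (yielding the $\max\{n,\varepsilon^{-1}\}^n$ factor). Your Step~1/Step~2 observation --- that $\rho=\yt\check\rho\ge\rho_0\yt$ and that $\xt$-derivatives of $\rho$ always produce explicit $\yt$-factors which pair with $\varepsilon^{-1}$ and can be killed by the exponential --- is precisely the structural mechanism that the paper also exploits (there, via the bound $|(\yt+\zeta_2)\check\rho(\cdot+\zeta)-\yt\check\rho|\lesssim \yt(|\zeta_1|+|\zeta_2|)+|\zeta_2|$, which feeds into the holomorphic estimate), so you have identified the right reason the lemma is true.

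However, there is a genuine gap in Steps~3--4: the factorial bookkeeping is not correct as stated, and the route you invoke to close it does not apply. You write that ``one collects all analytic-type factors $m_1!\prod a_i!\prod\alpha_j!\cdot c!$ into an overall $m!$''; this fails because $m_1$ (the number of $\check\theta$-blocks) is bounded only by $m+n$, not by $m$, so $m_1!$ alone can already be of size $(m+n)!\gg m!$. More fundamentally, bounding each Faà di Bruno term individually by $C^{m+n}m!\max\{n,\varepsilon^{-1}\}^n$ and then ``summing the terms over the partitions'' cannot work by a crude count: the number of set partitions of a set of size $m+n$ grows super-exponentially (Bell numbers), so the sum of termwise bounds is not $O(C^{m+n})$. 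The standard way to control the Faà di Bruno sum is a majorant (or Cauchy-integral) argument that treats the whole sum at once rather than term by term, and the ``factorial identities used in the proof of analyticity of compositions'' to which you appeal are precisely of this majorant type. But that standard argument is for compositions of \emph{analytic} functions; here, $u$ has genuinely anisotropic (Gevrey-type in $\rho$) growth $\max\{n,\varepsilon^{-1}\}^n$ in one variable and $m!$ in the other, and the coupling through $M$ mixes the two. A clean anisotropic majorant is not a black box one can cite; constructing one is essentially the work the paper does when it chooses anisotropic contour radii and controls the resulting exponential weight $\exp(C_1\gamma\yt(\delta_1+\delta_2)/\varepsilon)$. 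So Step~4 would need to be replaced by a careful anisotropic majorant computation (or a Cauchy-integral argument as in the paper); as written, the sketch does not close.
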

\begin{proof}
The proof uses arguments employed in \cite[Sec.~{4.3}]{melenk02}. 
Consider a fixed $(\xt,\yt) \in {\mathcal O}_x$ and set $(\theta',\rho') = M(\xt,\yt)$. 
Then $(\theta,\rho)\mapsto u(\theta,\rho)$ is holomorphic on the polydisc 
$$
B_{1/\gamma}(\theta') \times B_{1/(\gamma e)}(\rho') \subset {\mathbb C}^2
$$
with the bound 
\begin{align}
\label{eq:L:bdyLayTrafo-10}
|u(\theta'+\zeta_1,\rho'+\zeta_2)| 
\leq 
C_u(\rho',\theta') e^{-b \rho'/\varepsilon} \frac{1}{1 - \gamma |\zeta_1|} 
\left[ \frac{1}{1 - \gamma e |\zeta_2|} + \exp{(\gamma |\zeta_2|/\varepsilon})\right].
\end{align}
Since the functions $\check\theta$, $\check\rho$ are holomorphic on the 
closure of $M(G_x)$, there are $C_1$, $\delta > 0$ 
(independent of $(\xt,\yt) \in G$) such that 
for $\zeta_1$, $\zeta_2 \in B_\delta(0) \subset {\mathbb C}$ there holds 
\begin{align*}
\left|\check \theta(\xt + \zeta_1,y+\zeta_2) - \check\theta(\xt,\yt) \right| & \leq 
C_1 \left[ |\zeta_1| + |\zeta_2|\right], \\
\left| (\yt+\zeta_2)\check\rho(\xt+\zeta_1,\yt+\zeta_2) - \yt \check\rho(\xt,\yt)\right| &\leq 
C_1 \left[ y (|\zeta_1| + |\zeta_2|) + |\zeta_2| \right], 
\end{align*}
and we may assume that $\delta>0$ is such that for $\zeta_1$, $\zeta_2 \in B_\delta(0)$ we have 
$M(\xt+\zeta_1,\yt+\zeta_2) \in B_{1/(2\gamma)}(\theta') \times B_{1/(2 \gamma e)}(\rho')$. 
This implies in view of (\ref{eq:L:bdyLayTrafo-10}) the bounds
\begin{align}
|\widetilde u(\xt +\zeta_1,\yt+\zeta_2)| 
& = 
|u(M(\xt+\zeta_1,\yt+\zeta_2))| \\
& \leq 
C C_u(\theta',\rho') e^{-b \rho'/\varepsilon} 
  \exp(C_1 \gamma |\zeta_2|/\varepsilon) 
   \exp \bigl(C_1 \gamma \yt\, \bigr[|\zeta_1|+|\zeta_2|\bigl]/\varepsilon\bigr).
\end{align}
For $\delta_1$, $\delta_2 < \delta$ Cauchy's integral formula for derivatives gives  
\begin{align*} 
\partial^{\alpha_1}_\xt \partial^{\alpha_2}_\yt \widetilde u(\xt,\yt) = - \frac{\alpha_1! \alpha_2!}{4 \pi^2} 
\int_{\zeta_1 \in \partial B_{\delta_1}(0)} 
\int_{\zeta_2 \in \partial B_{\delta_2}(0)} 
\frac{\widetilde u(x+\zeta_1,y+\zeta_2)}{(-\zeta_1)^{\alpha_1+1} (-\zeta_2)^{\alpha_2+1}}\,d\zeta_1 d\zeta_2 
\end{align*}
so that 
\begin{align*} 
\left| \partial^{\alpha_1}_\xt \partial^{\alpha_2}_\yt \widetilde u(\xt,\yt) 
\right| \leq C C_u(\rho',\theta') e^{-b\rho'/\varepsilon} \frac{\alpha_1!}{\delta_1^{\alpha_1} } \frac{\alpha_2!}{\delta_2^{\alpha_2} } 
\exp(C_1 \gamma \delta_2/\varepsilon) 
\exp(C_1 \gamma y (\delta_1+\delta_2)/\varepsilon) 
\end{align*}
Selecting $\delta_1 = \delta:=  b \rho_0/(4 C_1)$ 
and $\delta_2 = \min\{(|\alpha_2|+1) \varepsilon,\delta\}$ yields 
the desired result with $b' = b/2$ 
since $C_1 \yt (\delta_1 + \delta_2)/\varepsilon \leq 
2 \delta C_1 \yt/\varepsilon \leq 2 \delta C_1 \rho'/\rho_0 = b/2$.  
\end{proof}

The following lemma shows how functions that may have a singular behavior 
are transformed under analytic changes of variables: 
\begin{lemma}[\protect{\cite[Lemma~{4.3.3}]{melenk02}}]
\label{L:cornerLayTrafo}
Let $\widetilde G \subset {\mathbb R}^2$ be a domain and 
$M: \widetilde G \rightarrow {\mathbb R}^2$ be analytic on $\operatorname{closure}(\widetilde G)$. 
Let $\widetilde {\mathcal O} \subset \widetilde G$ be open 
and ${\mathcal O}$ be an open neighborhood
of $M(\widetilde {\mathcal O})$. Let $u$ be analytic on ${\mathcal O}$ and assume that 
for some (positive) function 
$\Lambda$, $r: {\mathcal O} \rightarrow {\mathbb R}$ and 
some $\gamma\ge 0$ there holds 
\begin{equation}
\label{eq:L:cornerLayTrafo-10}
\forall n \in {\mathbb N}_0 
\quad \forall \bx \in {\mathcal O} \colon\;\;
|\nabla^n u(\bx)| \leq \Lambda(\bx) \gamma^{n} \max\{(n+1)/r(\bx),\varepsilon^{-1}\}^n . 
\end{equation}
Then the function $\widetilde u:= u \circ M$ is analytic on 
$\widetilde {\mathcal O}$ and 
there are constants $C$, $\widetilde \gamma > 0$ depending solely on 
$M$ and $\gamma$ such that 
for each $\boxt \in \widetilde {\mathcal O}$ there holds with the notation $\bx = M(\boxt)$
$$
\forall n \in {\mathbb N}_0 \colon\;\
|\nabla^n \widetilde u(\boxt)| \leq 
C \Lambda(\bx) \widetilde \gamma^{n} \max\{(n+1)/r(\bx),\varepsilon^{-1}\}^n . 
$$
\end{lemma}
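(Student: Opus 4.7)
The plan is to proceed by complex-analytic methods, mirroring the proof of Lemma~\ref{L:bdyLayTrafo} but with an \emph{adaptive} Cauchy polydisc radius that depends on the order of differentiation. A useful reformulation of the hypothesis \eqref{eq:L:cornerLayTrafo-10} is
\[
 |\nabla^n u(\bx)|  \le \Lambda(\bx)\,\gamma^n\, \rho_n(\bx)^{-n},
\qquad \rho_n(\bx) := \min\bigl\{r(\bx)/(n+1),\,\varepsilon\bigr\},
\]
since $\max\{(n+1)/r(\bx),\varepsilon^{-1}\} = 1/\rho_n(\bx)$. The target bound has the same form with a new constant $\widetilde\gamma$, and the goal is to show that the compositional structure of $\widetilde u = u \circ M$ preserves this packaging up to constants depending only on $\gamma$ and the analyticity data of $M$.

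First, I would establish a complex holomorphic extension of $u$ on a polydisc of radius $\delta_0 \sim r(\bx)/\gamma$ around $\bx$. Writing the Taylor series and splitting it at $n \sim r(\bx)/\varepsilon$, Stirling's inequality $(n+1)^n/n! \le e^{n+1}$ converts the high-$n$ ``algebraic'' tail into a geometric series in $e\gamma|\zeta|/r(\bx)$, convergent for $|\zeta| < r(\bx)/(2e\gamma)$, while the low-$n$ ``analytic'' part sums to at most $\exp(\gamma|\zeta|/\varepsilon)$. Combined, one obtains
\[
 |u(\bx+\zeta)| \le C\Lambda(\bx)\exp(c_1|\zeta|/\varepsilon)
\quad \text{for } |\zeta|\le r(\bx)/(C_0\gamma),
\]
with $C$, $C_0$, $c_1$ depending only on $\gamma$. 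The analyticity of $M$ at $\boxt$ then yields a complex extension with $|M(\boxt+\eta)-M(\boxt)|\le C_M|\eta|$ on a polydisc of radius $\delta_M$ depending only on $M$, so composition gives, for $|\eta| \le \delta := \min\{\delta_M,\, r(\bx)/(C_0\gamma C_M)\}$,
\[
 |\widetilde u(\boxt+\eta)| \le C\Lambda(\bx)\exp(c_1 C_M|\eta|/\varepsilon).
\]

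The core step is then Cauchy's integral formula for the $n$-th derivative, applied with the adaptive polydisc radius
\[
 \rho_n^\star := c_0\, \tfrac{n}{e}\,\min\{r(\bx)/(n+1),\,\varepsilon\}, \qquad n\ge 1,
\]
where $c_0>0$ is chosen small enough that $\rho_n^\star \le \delta$ uniformly in $n$ (this is possible because $\tfrac{n}{e}\min\{r/(n+1),\varepsilon\} \le r/e$). This single scaling resolves both regimes simultaneously: on the one hand $\rho_n^\star/\varepsilon \le c_0 n/e$, so the exponential factor $\exp(c_1 C_M\rho_n^\star/\varepsilon)$ contributes only a geometric $\kappa^n$; on the other, Stirling's inequality gives
\[
 \frac{n!}{(\rho_n^\star)^n} \le e\left(\tfrac{1}{c_0}\right)^n \max\{(n+1)/r(\bx),\,\varepsilon^{-1}\}^n,
\]
as a direct case check in each of the two regimes $n \ge r/\varepsilon - 1$ and $n < r/\varepsilon - 1$. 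Combining the two ingredients in Cauchy's estimate produces the desired bound with $\widetilde\gamma = \kappa/c_0$, while the case $n=0$ is the hypothesis at order zero.

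The main obstacle I anticipate is the calibration of the adaptive radius $\rho_n^\star$: it must be (i) large enough for the Stirling conversion of $n!/(\rho_n^\star)^n$ to recover $\max\{(n+1)/r,\varepsilon^{-1}\}^n$ without leaving a spurious $n!$ factor, which forces the extra $n/e$ in the prefactor; (ii) small enough that $\rho_n^\star$ remains inside the common domain of holomorphy of $\widetilde u$, which forces $c_0$ to be sufficiently small; and (iii) calibrated so that $\exp(c_1 C_M\rho_n^\star/\varepsilon)$ grows at most geometrically in $n$, which is automatic once $\rho_n^\star/\varepsilon \le c_0 n/e$. The particular product form $\tfrac{n}{e}\min\{r/(n+1),\varepsilon\}$ is essentially the unique scaling meeting all three constraints simultaneously. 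A final bookkeeping check confirms that $C$ and $\widetilde\gamma$ depend only on $\gamma$ and on the analyticity data of $M$ (through $C_M$ and $\delta_M$), and not on $\varepsilon$, $r(\bx)$, $n$, or $\Lambda(\bx)$.
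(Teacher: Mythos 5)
Your proof is correct and follows essentially the same approach as the paper's: construct a local holomorphic extension of $u$ with a bound of the form $C\Lambda(\bx)\exp(c|\zeta|/\varepsilon)$ on a polydisc of radius $\sim r(\bx)$, pull back through the analytic map $M$, and apply Cauchy's integral formula for derivatives with a polydisc radius that adapts to the differentiation order $n$. Your choice $\rho_n^\star = c_0\tfrac{n}{e}\min\{r(\bx)/(n+1),\varepsilon\}$ is the same (up to bounded multiplicative factors) as the paper's $\theta=\min\{\delta r(\bx),(|\alpha|+1)\varepsilon\}$; both yield the Stirling cancellation of $n!$ and keep the exponential factor geometric in $n$, and you handle $n=0$ separately since $\rho_0^\star=0$.
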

\begin{proof}
The statement is taken from \cite[Lemma~{4.3.3}]{melenk02} except that 
we explicitly allow $r$ to be a function of $\bx$. The proof is similar to that
of Lemma~\ref{L:bdyLayTrafo}. We fix $\boxt \in \widetilde G$ 
and set $\bx = M(\boxt)$. 
The assumption \eqref{eq:L:cornerLayTrafo-10} implies that $u$ has a 
holomorphic extension to $B_{c r(\bx)}(\bx) \subset {\mathbb C}^2$ with $c > 0$
depending solely on $\gamma$. Additionally, we have the bound 
for $z \in B_{c r(\bx)}(\bx) \subset {\mathbb C}^2$ 
(we write 
$r = r(\bx)$)
\begin{equation*}
|u(\bx+z)| \leq \Lambda(\bx) 
\sum_{n=0}^\infty \frac{1}{n!} |z|^n |\nabla^n u(\bx)|
\leq C \left[ \frac{1}{1 - |z|/(cr)} + \exp(C' |z|/\varepsilon)\right].
\end{equation*}
for suitable $C$, $C'$. 
The analyticity of $M$ on 
$\operatorname{closure}(\widetilde G)$ implies the existence of 
$\delta > 0$ (independent of $\boxt = (\xt,\yt) \in \widetilde G$) 
such that 
$$
M(\xt + B_{\delta r}(0) , \yt + B_{\delta r}(0)  ) 
\subset B_{\frac{1}{2} c r(\bx)}(\bx). 
$$
For $\alpha \in {\mathbb N}^2_0$ let 
$\theta:= \min\{\delta r(\bx), (|\alpha|+1) \varepsilon\}$. 
{}The Cauchy integral theorem for derivatives gives 
\begin{align*}
\partial_{\xt}^{\alpha_1} \partial_{\yt}^{\alpha_2} 
\widetilde u(\xt,\yt) = 
- \frac{\alpha_1! \alpha_2!}{4 \pi^2} 
\int_{z_1 \in \partial B_{\theta}(0)} \int_{z_2 \in \partial B_{\theta}(0)}
\frac{\widetilde u(M(\boxt+(z_1,z_2)))}
     {(-z_1)^{\alpha_1+1} (-z_2)^{\alpha_2+1}}
\end{align*}
so that we get 
\begin{align*}
|\partial_{\xt}^{\alpha_1} \partial_{\yt}^{\alpha_2}
\widetilde u(\xt,\yt) | 
& \lesssim \alpha_1! \alpha_2! \theta^{-|\alpha|} 
\left[ 1 + \exp(C' \theta/\varepsilon)\right] \\
& \lesssim |\alpha|! \max\{(\delta r(\bx))^{-1}, (|\alpha|+1)^{-1} \varepsilon^{-1}\}^{|\alpha|} 
\left[ 1 + \exp(C' |\alpha|)\right], \\
\end{align*}
which proves the asserted estimate. 
\end{proof}
\section{Univariate Approximation} 
\label{sec:AppB}
\begin{lemma}
\label{lemma:1D-poly-approx}
Let $I = (-1,1)$ and $u \in C^\infty(I)$ satisfy, for  
some constants $C_u$, $\gamma_u>0$,
for some $h\in(0,1]$, $\varepsilon \in (0,1]$ 
the bound
\begin{equation}
\forall n \in {\mathbb N}_0 \colon\;\;
\|D^n u\|_{L^\infty(I)} \leq C_u (\gamma_u h)^n \max\{n,\varepsilon^{-1}\}^n . 
\end{equation}
Then there are constants $C$, $\eta$, $\delta> 0$ depending solely on 
$\gamma_u$ such that under the constraint 
\begin{equation}
\label{eq:1d-constraint}
\frac{h}{\varepsilon q} \leq \delta 
\end{equation}
there holds 
\begin{equation}\label{eq:1d-consist}
\forall q \in {\mathbb N}\colon\;\;
\inf_{v \in {\mathbb P}_q} \|u - v\|_{W^{1,\infty}(I)} 
\leq 
C C_u \left( \left( \frac{h}{h+\eta}\right)^{q+1} + 
\left(\frac{h}{\eta \varepsilon q}\right)^{q+1} 
\right).
\end{equation}
\end{lemma}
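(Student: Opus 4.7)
The plan is to exploit the analytic regularity of $u$ forced by the derivative bounds together with classical univariate polynomial approximation, treating two regimes separately according to the size of $q\varepsilon$; the two terms in \eqref{eq:1d-consist} correspond to these two regimes. The first term will be produced by a Bernstein-ellipse argument, and the second by a direct Taylor-remainder estimate.

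First I would show that the hypothesis implies $u$ extends holomorphically to a complex neighborhood of $I$ with a quantitative bound. Expanding $u$ in a Taylor series at $x_0\in I$ and splitting at $n\simeq\varepsilon^{-1}$---so that for $n<\varepsilon^{-1}$ the summands are bounded by $(\gamma_u h/\varepsilon)^n|z|^n/n!$, while for $n\geq\varepsilon^{-1}$, using Stirling's bound $n^n\leq e^n n!$, by $(\gamma_u e h|z|)^n$---one obtains, for $|z|<1/(2\gamma_u eh)$,
\[
|u(x_0+z)|\;\lesssim\;C_u\Bigl(e^{\gamma_u h|z|/\varepsilon}+(1-\gamma_u e h|z|)^{-1}\Bigr).
\]
In particular $u$ extends holomorphically to a strip of width $\eta_1/h$ about $I$ (with $\eta_1=1/(2\gamma_u e)$) with $\|u\|_{L^\infty}\leq CC_u(1+e^{c_0/\varepsilon})$ for some $c_0=c_0(\gamma_u)$.

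For the ``asymptotic'' regime $q\varepsilon\geq\kappa_0$ (with a threshold $\kappa_0$ depending only on $\gamma_u$), I would invoke the classical Bernstein bound: if $u$ is bounded by $M_\rho$ on the Bernstein ellipse $E_\rho$, then $\inf_{v\in\mathbb{P}_q}\|u-v\|_{L^\infty(I)}\lesssim M_\rho\rho^{-(q+1)}/(\rho-1)$. Taking $\rho=1+\eta/h$ with small fixed $\eta<\eta_1$ places $E_\rho$ inside the analyticity strip and gives $\rho^{-(q+1)}=(h/(h+\eta))^{q+1}$; for $\kappa_0$ sufficiently large the exponential factor in $M_\rho$ is absorbed by the polynomial decay, producing the first term of \eqref{eq:1d-consist}. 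For the ``preasymptotic'' regime $q\varepsilon\leq\kappa_0$ (where one may assume $\varepsilon^{-1}\geq q+1$), I would instead estimate the Taylor remainder at the midpoint of $I$:
\[
\|u-T_q u\|_{L^\infty(I)}\leq \frac{\|D^{q+1}u\|_{L^\infty}}{(q+1)!}\leq \frac{C_u(\gamma_u h/\varepsilon)^{q+1}}{(q+1)!}\leq C_u\left(\frac{e\gamma_u h}{(q+1)\varepsilon}\right)^{q+1},
\]
where the last inequality uses $(q+1)!\geq((q+1)/e)^{q+1}$. Setting $\eta:=1/(e\gamma_u)$ yields the second term of \eqref{eq:1d-consist} up to harmless constants.

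The $W^{1,\infty}$ estimate then follows by applying the same analysis to $u'$, whose derivatives satisfy an analogous bound with possibly readjusted constants, or alternatively by estimating $(u-v)'$ via Cauchy's integral formula on a contour inside the analyticity strip; both approaches introduce only algebraic-in-$q$ factors that are absorbed into the exponential decay. The main obstacle is the coordinated choice of the constants $\eta$, $\delta$, and $\kappa_0$: $\eta<\eta_1$ must be chosen so that $E_{1+\eta/h}$ lies strictly inside the analyticity strip uniformly in $h\in(0,1]$; $\kappa_0$ must be large enough that $e^{c_0/\varepsilon}$ is dominated by $(h/(h+\eta))^{-(q+1)}$ in the asymptotic regime; and $\delta$ must be small enough (depending on $\eta$ and $\gamma_u$) that $(h/(\eta\varepsilon q))^{q+1}$ is genuinely decaying in $q$.
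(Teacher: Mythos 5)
Your proposal is correct in its essentials, but it takes a genuinely different route from the paper, so let me compare, and then flag one small gap.

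The paper's proof dichotomizes on the size of $h$: in Case~1 ($e\gamma_u h<1/2$) it sums the full tail of the Taylor series of $u$ about $0$, using the explicit remainder estimate \eqref{eq:taylor-exp} for $e^x$; in Case~2 ($h\geq 1/(2e\gamma_u)$) it works on a neighborhood $G_\kappa(I)$ with a \emph{fixed} $\kappa$, obtains a Bernstein-type rate $\rho_1^{-q}$ with $\rho_1$ depending only on $\kappa$, uses the $\delta$-constraint to absorb $e^{\kappa\gamma_u h/\varepsilon}\leq e^{\kappa\gamma_u\delta q}$, and then \emph{uses the lower bound on $h$} to convert the resulting $\rho_2^{-q}$ into $(h/(h+\eta))^{q}$. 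You instead dichotomize on $q\varepsilon$ and take an $h$-dependent Bernstein parameter $\rho=1+\eta/h$ (so the conversion to $(h/(h+\eta))^{q+1}$ is automatic and the argument covers all $h\in(0,1]$), absorbing the exponential via $1/\varepsilon\leq q/\kappa_0$ rather than via the $\delta$-constraint. Your preasymptotic estimate uses the single Lagrange remainder term $\|D^{q+1}u\|_{L^\infty}/(q+1)!$ rather than summing the tail — a little cleaner, and for the $W^{1,\infty}$-version it yields only an extra factor $q+1$, which can be absorbed by a mild decrease of $\eta$. Both schemes are valid; the paper's split makes each regime depend only on $\gamma_u$ and $h$, yours makes it depend on $q\varepsilon$, and you correctly anticipate that the delicacy is in coordinating $\eta$, $\delta$, and the threshold $\kappa_0$.

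There is one concrete gap in that coordination that you should not gloss over. You write that in the preasymptotic regime one may assume $\varepsilon^{-1}\geq q+1$. But $q\varepsilon\leq\kappa_0$ only gives $\varepsilon^{-1}\geq q/\kappa_0$, so $\varepsilon^{-1}\geq q+1$ requires $\kappa_0\leq q/(q+1)$, i.e.\ $\kappa_0<1$. On the other hand, for the exponential absorption in your asymptotic regime you need (after taking logarithms and using $h\leq 1$) something like $\gamma_u\eta/\kappa_0\leq\log\bigl((1+\eta)/(1+\eta')\bigr)$, which forces $\kappa_0\gtrsim\gamma_u$; for $\gamma_u$ not small, this is incompatible with $\kappa_0<1$. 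The fix is easy but must be done: in the preasymptotic regime with $\varepsilon^{-1}<q+1$, the Lagrange remainder yields $\|D^{q+1}u\|_{L^\infty}/(q+1)!\leq C_u(e\gamma_u h)^{q+1}$, and since in that regime the $\delta$-constraint forces $h\leq\delta q\varepsilon\leq\delta\kappa_0$, one has $e\gamma_u(h+\eta)\leq e\gamma_u(\delta\kappa_0+\eta)\leq 1$ for $\delta,\eta$ small (both depending only on $\gamma_u$), so this contribution is bounded by $(h/(h+\eta))^{q+1}$ — the \emph{first} term of \eqref{eq:1d-consist} rather than the second. With that extra sub-case in place the proof closes.

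Two minor remarks. In your asymptotic regime you should make sure the required constants ($\eta_1=1/(2\gamma_u e)$, the admissible $\eta<\eta_1$, and the Bernstein constant uniformly over $\rho>1$) are genuinely $h$-independent; they are, as the analyticity strip has radius $\eta_1/h$ while the ellipse $E_{1+\eta/h}$ lies inside $G_{\eta/h}(I)$ (the inclusion $E_{1+\kappa}\subset G_\kappa(I)$ holds for all $\kappa>0$, as the paper notes via \cite{boerm-loehndorf-melenk05}), but it is worth stating. Also, the paper obtains the $W^{1,\infty}$-version directly from the quoted approximation theorem \cite[Thm.~6]{apel-melenk17}; your alternative via a Cauchy estimate on a slightly smaller ellipse works but effectively halves $\eta$, so keep track of that.
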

\begin{proof}
We start with the observation that Taylor's theorem yields 
for $x > 0$ 
\begin{equation}
\label{eq:taylor-exp}
\sum_{n \ge q+1} \frac{1}{n!} x^n
= e^x - \sum_{n=0}^q \frac{x^n}{n!} 
= \frac{1}{q!} \int_0^x (x-t)^q e^t\,dt
\leq \frac{x^{q+1}}{q!} e^x.
\end{equation}
\emph{Case~1:} Let $e \gamma_u h < 1/2$. 
Then the Taylor series of $u$ about $x_0 = 0$ 
converges in $I$ 
and 
the Taylor polyomials $T_q \in {\mathbb P}_q$ 
satisfy the error bounds 
\begin{align*}
\|u - T_q\|_{L^\infty(I)} &\leq \sum_{n=q+1}^\infty \frac{|D^n u(0)|}{n!} 
\leq 
C_u \sum_{n=q+1}^\infty 
(\gamma_u e h)^n + \frac{(\gamma_u h/\varepsilon)^n}{n!} 
\\
& 
\
\stackrel{(\ref{eq:taylor-exp})}{ \leq } 
C_u \left( \frac{(\gamma_u e h)^{q+1}}{1 - (\gamma_u e h)} + 
\frac{(\gamma_u h/\varepsilon)^{q+1}}{q!} e^{\gamma_u h/\varepsilon}
\right) \\
& \stackrel{(\ref{eq:stirling})}{\leq} 
   C_u \left( 2 (\gamma_u e h)^{q+1} + 
C (\gamma^\prime h/(\varepsilon q))^{q+1} e^{\gamma_u h/\varepsilon}
\right),  
\end{align*}
for suitable $\gamma^\prime > \gamma_u$. 
The assumption (\ref{eq:1d-constraint}) allows us to estimate 
$e^{\gamma_u h/\varepsilon} \leq e^{\gamma_u \delta q}$ 
and the desired result follows for the $L^\infty$-estimate. 
An analogous argument applies for the $W^{1,\infty}$-estimate. 

\emph{Case~2:} 
Let $1 \ge h > 1/(2e\gamma_u)$. 
Introduce for $\rho > 1$ the ellipse 
${\mathcal E}_\rho := 
\{z \in {\mathbb C}\,|\, |z - 1| + |z + 1| < \rho + 1/\rho\}$ 
and set $G_\kappa(I):= \cup_{x \in I} B_\kappa(x)$. By geometric
considerations (e.g., with the aid of \cite[Lemma~{3.14}]{boerm-loehndorf-melenk05}) 
one has ${\mathcal E}_{1+\kappa} \subset G_\kappa(I)$. 
Taylor's theorem gives that $u$ is holomorphic on $G_{1/(\gamma_u h)}(I)$ 
and for every $\kappa< 1/(\gamma_u h)$ we have 
\begin{equation}
\label{eq:lemma:1d-hol}
\|u\|_{L^\infty(G_{\kappa})} \leq C_u \!
\sum_{n=0}^\infty \frac{1}{n!} (h\gamma_u \kappa)^n \max\{n,\varepsilon^{-1}\}^n 
\leq 
\!C_u\! \left[ \frac{1}{1-e \gamma_u h \kappa} + \exp(\kappa\gamma_u h /\varepsilon)\right]. 
\end{equation}
Well-established polynomial approximation results 
(see, e.g., \cite[Thm.~{6}]{apel-melenk17})
then yield for fixed 
$\kappa>0$ the existence of $\rho_1 = \rho_1(\kappa) > 1$ such that 
\begin{align*}
\inf_{v \in {\mathbb P}_q} \|u - v\|_{W^{1,\infty}(I)} 
\leq C C_u \rho_1^{-q} \|u\|_{L^\infty(G_\kappa)}
\leq C C_u \rho_1^{-q} e^{\kappa \gamma_u h/\varepsilon} 
\leq C C_u \rho_1^{-q} e^{\kappa \gamma_u \delta q}.
\end{align*}
Fix $1 < \rho_2 < \rho_1$. 
Then we may select $\delta >0 $ sufficiently small so 
that there exists a constant $C>0$ such that
\begin{align*}
\forall q\in \bbN: \;\;
\inf_{v \in {\mathbb P}_q} \|u - v\|_{W^{1,\infty}(I)} 
\leq C C_u \rho_2^{-q}. 
\end{align*}
Using $h \ge 1/(2e \gamma_u)$ and suitably choosing $\eta$, we can estimate 
\begin{align*}
\rho_2^{-q}  \leq \left(\frac{h}{h+\eta}\right)^q.  
\end{align*}
\end{proof}
\begin{lemma}[stability of the 1d-Gauss-Lobatto (GL) interpolant]
\label{lemma:1d-GL}
Let $I = [-1,1]$. 
There exists a constant $C>0$ such that for any $q\in \bbN$,
the Gauss-Lobatto interpolation  operator 
$i_q:C(I) \rightarrow {\bbP}_q$ satisfies: 
\begin{align}
\label{eq:lemma:1d-GL-i}
\|u - i_q u\|_{L^\infty(I)} &\leq (1 + \Lambda_q) \inf_{v \in {\bbP}_q}
\|u - v\|_{L^\infty(I)}, \qquad \Lambda_q  = C \ln (q+1), \\
\label{eq:lemma:1d-GL-ii}
\|(u - i_q u)^\prime\|_{L^\infty(I)} &\leq C(1 + q^2 \Lambda_q) 
\inf_{v \in {\bbP}_q} \|(u - v)^\prime\|_{L^\infty(I)}. 
\end{align}
\end{lemma}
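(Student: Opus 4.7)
The plan is to treat the two inequalities sequentially, with the Lebesgue constant bound $\Lambda_q \leq C \ln(q+1)$ as the underlying technical fact.

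First I would recall (or cite from the classical literature, e.g., the results on Gauss--Lobatto--Legendre nodes going back to Szegő) that the operator norm of $i_q: (C(I),\|\cdot\|_\infty) \to (C(I),\|\cdot\|_\infty)$ equals the Lebesgue constant $\Lambda_q$ associated with the GL abscissae, and that $\Lambda_q \leq C \ln(q+1)$. Accepting this as the hard technical input, inequality \eqref{eq:lemma:1d-GL-i} is then immediate from the projection property: for any $v \in \bbP_q$ one has $i_q v = v$, so $u - i_q u = (u-v) - i_q(u-v)$, and
\begin{equation*}
\|u - i_q u\|_{L^\infty(I)} \leq \|u-v\|_{L^\infty(I)} + \Lambda_q \|u-v\|_{L^\infty(I)} = (1+\Lambda_q)\|u-v\|_{L^\infty(I)};
\end{equation*}
taking the infimum over $v \in \bbP_q$ yields the claim.

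For inequality \eqref{eq:lemma:1d-GL-ii}, the key idea is to choose the comparison polynomial $v \in \bbP_q$ so that its derivative is the best $L^\infty$-approximant to $u'$ from $\bbP_{q-1}$, and then control $\|u-v\|_\infty$ by $\|(u-v)'\|_\infty$ via a single antidifferentiation. Concretely, let $p^\star \in \bbP_{q-1}$ achieve $\inf_{p \in \bbP_{q-1}} \|u' - p\|_{L^\infty(I)}$ and set
\begin{equation*}
v(x) := u(-1) + \int_{-1}^x p^\star(t)\,dt \in \bbP_q.
\end{equation*}
Then $v(-1) = u(-1)$, $v' = p^\star$, and integration gives $\|u-v\|_{L^\infty(I)} \leq 2 \|(u-v)'\|_{L^\infty(I)}$. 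Writing $(u - i_q u)' = (u-v)' - (i_q(u-v))'$ and applying the Markov inverse inequality $\|p'\|_{L^\infty(I)} \leq q^2 \|p\|_{L^\infty(I)}$ for $p \in \bbP_q$, followed by \eqref{eq:lemma:1d-GL-i}, gives
\begin{equation*}
\|(u - i_q u)'\|_{L^\infty(I)} \leq \|(u-v)'\|_{L^\infty(I)} + q^2 (1+\Lambda_q) \|u-v\|_{L^\infty(I)} \leq C(1 + q^2 \Lambda_q) \|(u-v)'\|_{L^\infty(I)}.
\end{equation*}
Since $\{v' : v \in \bbP_q\} = \bbP_{q-1}$, the infimum over $v \in \bbP_q$ of $\|(u-v)'\|_\infty$ equals $\inf_{p \in \bbP_{q-1}} \|u'-p\|_\infty$, which is certainly bounded above by $\inf_{v \in \bbP_q}\|(u-v)'\|_\infty$ (in fact they are equal), completing the estimate in the form stated.

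The only non-routine ingredient is the logarithmic growth of the Lebesgue constant for Gauss--Lobatto nodes; everything else (projection property, Markov's inequality, antidifferentiation) is elementary. Since the paper cites this lemma in the routine form, I would simply invoke the classical bound on $\Lambda_q$ rather than prove it from scratch, and present the two short arguments above.
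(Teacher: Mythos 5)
Your proof is correct and follows essentially the same route as the paper's: both establish \eqref{eq:lemma:1d-GL-i} from the projection property plus the cited logarithmic bound on the Lebesgue constant, and both obtain \eqref{eq:lemma:1d-GL-ii} by writing $(u - i_q u)' = (u-v)' - (i_q(u-v))'$, applying Markov's inequality to $i_q(u-v) \in \bbP_q$, and then anchoring $v$ at $x=-1$ so that $\|u-v\|_{L^\infty(I)} \lesssim \|(u-v)'\|_{L^\infty(I)}$ (the paper phrases the last step as a Poincar\'e inequality, you phrase it as an explicit antidifferentiation; these are the same observation). The only cosmetic deviation is that you bound $\|i_q(u-v)\|_\infty$ via \eqref{eq:lemma:1d-GL-i} and the triangle inequality rather than directly by the operator-norm bound $\Lambda_q\|u-v\|_\infty$, but the resulting constants are equivalent up to $C$.
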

\begin{proof}
The bound \eqref{eq:lemma:1d-GL-i} 
follows from the projection property 
of the Gauss-Lobatto interpolation; the logarithmic growth of the 
Lebesgue constant $\Lambda_q$ is shown in \cite{suendermann83}. 
 
For (\ref{eq:lemma:1d-GL-ii}), 
we estimate for arbitrary $v \in {\bbP}_q$ 
\begin{align*}
& \|(u - i_q u)^\prime\|_{L^\infty(I)} 
\leq 
\|(u - v)^\prime\|_{L^\infty(I)} + \|(i_q (u - v))^\prime\|_{L^\infty(I)} \\
& \qquad \lesssim \|(u - v)^\prime\|_{L^\infty(I)} + q^2 \|i_q (u - v)\|_{L^\infty(I)} 
 \lesssim \|(u - v)^\prime\|_{L^\infty(I)} + q^2 \Lambda_q \|u - v\|_{L^\infty(I)}.  
\end{align*}
Constraining $v$ to satisfy $v(-1) = u(-1)$ the result follows 
from a Poincar\'e inequality. 
\end{proof}
Finally, we recall two inequalities of Stirling's type.
\begin{align}
\label{eq:stirling}
\forall n\in \bbN\colon \;\; 
\sqrt{2\pi} n^{n+1/2} e^{-n} & \leq n! \leq e n^{n+1/2} e^{-n}  
\;,
\\
\label{eq:binom}
\forall n\in \bbN_0 \quad \forall \alpha\in \bbN_0\colon \;\;
 \alpha! n! &\ge 2^{-(\alpha+n)} (\alpha +n)! 
\ge 
(2 e)^{-(\alpha+n)} (\alpha+n)^{\alpha+n}. 
\end{align}
(\ref{eq:stirling}) follows from \cite{robbins55}. In 
\ref{eq:binom}, the first bound follows from the binomial formula 
$\sum_{\nu=0}^m \binom{m}{\nu} x^\nu = (1+x)^m$ 
and 
the second bound follows from (\ref{eq:stirling}). 

\end{document}